\newtheorem{theorem}{Theorem}[section]
\newtheorem{lemma}[theorem]{Lemma}
\newtheorem{proposition}[theorem]{Proposition}
\newtheorem{corollary}[theorem]{Corollary}
\newtheorem{definition}[theorem]{Definition}
\newtheorem{conjecture}[theorem]{Conjecture}
\newtheorem{remark}[theorem]{Remark}
\newcommand{\Irr}{\mathop{\mathrm{Irr}}}
\newcommand{\Alt}{\mathop{\mathrm{Alt}}}
\newcommand{\Aut}{\mathop{\mathrm{Aut}}}
\newcommand{\Out}{\mathop{\mathrm{Out}}}
\newcommand{\Sym}{\mathop{\mathrm{Sym}}}
\newcommand{\GL}{\mathop{\mathrm{GL}}}
\newcommand{\PSL}{\mathop{\mathrm{PSL}}}
\newcommand{\PGL}{\mathop{\mathrm{PGL}}}
\newcommand{\AGL}{\mathop{\mathrm{AGL}}}
\newcommand{\lcm}{\mathop{\mathrm{lcm}}}
\def\cent#1#2{{\bf C}_{{#1}}({#2})}
\newcommand{\Fix}{\mathop{\mathrm{Fix}}}
\newcommand{\fpr}{\mathop{\mathrm{fpr}}}
\newcommand{\soc}{\mathop{\mathrm{soc}}} 
\renewcommand{\wr}{\mathop{\mathrm{wr}}}
\newcommand{\vrD}{\vrule height1.9ex width0.4pt depth0.8ex}
\newcommand{\vrS}{\vrule height1.26ex width0.3pt depth0.56ex}
\newcommand{\vrSS}{\vrule height0.95ex width0.3pt depth0.4ex}
\def\vr{\mathchoice{\vrD}{\vrD}{\vrS}{\vrSS}}
\def\divides{\mathchoice{\mathrel{\vr}}{\mathrel{\vr}}
                        {\mathrel{\,\vr\,}}{\mathrel{\,\vr\,}}}
\def\Om{\Omega}
\begin{document}

\title[Regular Orbits]{Finite primitive permutation groups and regular cycles
  of their elements}

\author[M.~Giudici]{Michael Giudici}
\address{Michael Giudici, Centre for Mathematics of Symmetry and Computation,\newline
School of Mathematics and Statistics,\newline
The University of Western Australia,
 Crawley, WA 6009, Australia}
\email{michael.giudici@uwa.edu.au}

\author[C. E. Praeger]{Cheryl E. Praeger}
\address{Cheryl E. Praeger, Centre for Mathematics of Symmetry and Computation,\newline
School of Mathematics and Statistics,\newline
The University of Western Australia,
 Crawley, WA 6009, Australia\newline
Also affiliated with King Abdulaziz University,
Jeddah, Saudi Arabia} \email{Cheryl.Praeger@uwa.edu.au}

\author[P. Spiga]{Pablo Spiga}
\address{Pablo Spiga, Dipartimento di Matematica e Applicazioni, University of Milano-Biccoca, Via Cozzi 53, 20125 Milano, Italy} \email{pablo.spiga@unimib.it}

\thanks{The paper forms part of the Australian Research Council Discovery Project grant DP130100106 of the second author.}

\subjclass[2000]{20B15, 20H30}
\keywords{cycle lengths; element orders; primitive groups; quasiprimitive groups}

\begin{abstract}
We conjecture that if $G$ is a finite primitive group and if $g$ is an element of $G$, 
then either the element $g$ has a cycle of length equal to its order, or for some 
$r,m$ and $k$, the group $G\leq \Sym(m)\wr \Sym(r)$, preserving a product structure 
of $r$ direct copies of the natural action of $\Sym(m)$ or $\Alt(m)$  on $k$-sets. 
In this paper we reduce this conjecture to the case that $G$ is an almost simple group 
with socle a classical group.\\
\begin{center}
 {\em Dedicated to the  memory of  \'Akos Seress}
\end{center}
\end{abstract}
\maketitle

\section{Introduction}\label{intro}

Let $G$ be a finite primitive permutation group on a finite set $\Om$ and let 
$H\leq G$. A \emph{regular orbit} of $H$ in $\Om$ is one of size $|H|$. In particular, 
if $H=\langle g\rangle$, then a regular orbit of $H$ is the point set of a $g$-cycle of 
length equal to the order $|g|$ of $g$ in its disjoint cycle representation; we call 
such a cycle a \emph{regular cycle} of $g$ in $\Omega$.  Siemons and Zalesskii~\cite{SZ2} asked 
for conditions under which subgroups of primitive groups could be guaranteed to have
regular orbits. 

The work of Siemons and Zalesskii in \cite{SZ2,SZ1}, and later the work of Emmett and 
Zalesskii in~\cite{EZal}, focuses
on the case where $H$ is cyclic and $G$ is a non-abelian simple group (though the groups satisfying 
${\rm PSL}(n,q)< G\leq {\rm PGL}(n,q)$ are also treated in \cite[Theorem 1.1]{SZ2}). They show that, 
if $G$ is a non-abelian simple group that is not isomorphic to an alternating group,
and if either $G$ is a classical group or $G$ has a $2$-transitive permutation representation, then 
each element of $G$ has a regular cycle. 

This paper arose from a conversation between the third author and Alex Zalesskii and we are grateful to Alex 
for being so enthusiastic in talking about his mathematics. Our work 
makes a significant contribution towards answering the question: \emph{When do all elements
of a finite primitive permutation group $(G, \Omega)$ have at least one regular cycle?} 
The finite alternating group  $G=\Alt(n)$ acting on $\Omega=\{1,\dots,n\}$, with $n\geq7$, 
does not have this property 
since, for example, the permutation  $g=(123)(45)(67)\in\Alt(n)$ has no cycle of length $|g|=6$ in its 
natural action on $n$ points. However, the family of primitive permutation groups $(G,\Om)$
containing elements with no regular cycles does not consist simply of the 
natural representations of the alternating and symmetric groups  $\Alt(n)$ and $\Sym(n)$. For example, 
an element $g\in\Sym(10)$ with cycles of lengths $2, 3, 5$ in  its action on 10 points, 
has order $30$ and has cycle lengths $1, 3, 5, 5, 6, 10, 15$ in its action on the set of $45$ pairs 
of points from $\{1,\dots,10\}$. Thus the primitive action of $\Sym(10)$ on pairs 
is another example.

The natural action of $\Sym(m)$ on the set of $k$-subsets of $\{1,\dots,m\}$ is called its \emph{$k$-set action}.   
It is possible to specify precisely the $k$-set actions of finite symmetric groups 
for which all elements have regular cycles.

\begin{theorem}\label{actiononsets}
Let $k\geq 1$ and let $n_k$ be the sum of the first $k$ prime numbers.
Then, for $m\geq 2k$,  every element of $\Sym(m)$ has a regular cycle in its $k$-set action if and only if $m < n_{k+1}$.
\end{theorem}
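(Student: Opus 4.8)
The plan is to analyze when an element $g \in \Sym(m)$ fails to have a regular cycle on $k$-sets in terms of the cycle type of $g$ on the $m$ points. Write the cycle type of $g$ as a multiset of cycle lengths $\{\ell_1, \ldots, \ell_t\}$ summing to $m$, so that $|g| = \lcm(\ell_1, \ldots, \ell_t)$. A cycle of $g$ on $k$-sets is regular precisely when it has length $|g|$; equivalently, $g$ has a regular cycle on $k$-sets if and only if there is a $k$-set $\Delta$ whose setwise stabilizer in $\langle g\rangle$ is trivial. The key combinatorial observation is that a $k$-set $\Delta$ is fixed setwise by $g^d$ (for $d$ a proper divisor of $|g|$) exactly when $\Delta$ is a union of orbits of $g^d$ on points; since $g^d$ has orbits on the $i$-th cycle of size $\ell_i/\gcd(\ell_i,d)$ grouped into $\gcd(\ell_i,d)$ orbits, one translates "no regular cycle" into a covering/divisibility condition. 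I would first establish the clean criterion: \emph{$g$ has a regular cycle on $k$-sets if and only if there is a subset $S$ of the cycles of $g$ with $\sum_{i \in S} \ell_i = k$ and $\lcm_{i \in S}(\ell_i) = |g| = \lcm_i(\ell_i)$} — i.e.\ one can select whole cycles whose lengths sum to $k$ and whose lcm already realizes the full order. (A short argument reduces "union of $g^d$-orbits of total size $k$" to "union of whole $g$-cycles of total size $k$" after possibly enlarging; care is needed here and this reduction is one place to be precise.)

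Granting that criterion, the theorem becomes a purely number-theoretic statement about partitions. For the "if" direction ($m < n_{k+1}$ implies every element has a regular cycle), suppose $\ell_1 + \cdots + \ell_t = m < n_{k+1} = p_1 + \cdots + p_{k+1}$, where $p_j$ is the $j$-th prime. I would first reduce to the case where each $\ell_i$ is a prime power, or better, argue that among the cycles one can extract a sub-multiset summing to exactly $k$ and carrying the full lcm, using the hypothesis $m \geq 2k$ to guarantee enough "mass" and $m < n_{k+1}$ to control how many large prime-power pieces there can be. The intuitive heart: if $g$ had no regular $k$-set cycle, then for \emph{every} way of writing $k$ as a sum of some of the $\ell_i$, some prime power $q \| |g|$ dividing some $\ell_j$ is missing from the chosen cycles; forcing this for all subsets summing to $k$ pushes $m$ up to at least $n_{k+1}$. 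One extracts an extremal bad configuration, shows it must look like $k+1$ cycles of distinct prime-power lengths (one for each of $k+1$ "obstructing" prime powers), and minimizes the total length, which is exactly $n_{k+1}$ (the first $k+1$ primes, since prime powers $>$ prime give larger sums and one checks small cases). For the "only if" direction I would exhibit the explicit bad element: take $g$ with cycles of lengths $p_1, p_2, \ldots, p_{k+1}$ (and, if $m \geq n_{k+1}$, pad with extra points so that the padding does not help — e.g.\ absorb leftover points into one of these cycles only if it keeps a prime-power, or add fixed points); then $|g| = p_1 \cdots p_{k+1}$, but any sub-multiset of the cycle lengths summing to $\leq k$ omits at least one $p_j$, so its lcm is a proper divisor of $|g|$, hence no regular $k$-set cycle exists, and this construction fits inside $\Sym(m)$ as soon as $m \geq n_{k+1}$ — here the condition $m \geq 2k$ is comfortably satisfied since $n_{k+1} \geq 2 + 3 + \cdots \geq$ grows faster than $2k$.

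The main obstacle I anticipate is the "only if" padding argument together with handling the interplay between $m \geq 2k$ and $m \geq n_{k+1}$: one must ensure that when $m$ lies between $2k$ and $n_{k+1}$ the theorem is not vacuous and that for $m \geq n_{k+1}$ a genuine bad element exists \emph{within} $\Sym(m)$ on $k$-sets with $k \leq m/2$. A secondary subtlety is the reduction in the regularity criterion from "union of $g^d$-orbits" to "union of full $g$-cycles": it is not literally true that every $g^d$-invariant $k$-set is a union of $g$-cycles, so the correct statement is that a regular cycle exists iff \emph{some} full-cycle selection works, and proving this equivalence needs the observation that if a $k$-set $\Delta$ is $g$-movable but $\langle g \rangle_\Delta \neq 1$, then enlarging/shrinking $\Delta$ along a cycle where the stabilizer acts lets one trade partial cycles for whole ones without changing the sum $k$ — this requires a careful induction on the number of "broken" cycles. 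Finally, I would check the small values $k = 1, 2$ by hand (e.g.\ $k=1$ gives $n_2 = 5$, matching that $\Sym(m)$ acting naturally on $m \geq 2$ points has an element with no regular cycle iff $m \geq 5$, witnessed by $(123)(45)$), confirming the formula and the edge cases of the induction.
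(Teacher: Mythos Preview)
Your central criterion—that $g$ has a regular cycle on $k$-sets if and only if one can select \emph{whole} cycles of $g$ whose lengths sum to exactly $k$ and whose lcm equals $|g|$—is false, and both directions of your argument are built on it. A small counterexample: take $m=4$, $k=2$, and $g=(1,2,3,4)$. There is only one cycle, of length $4$, so no collection of whole cycles sums to $2$; yet $\{1,2\}$ lies on a $g$-orbit of length $4=|g|$, so $g$ does have a regular cycle on $2$-sets. The trade you propose (``enlarging/shrinking $\Delta$ along a cycle\ldots\ lets one trade partial cycles for whole ones without changing the sum $k$'') simply cannot be carried out here. Your own sanity check at $k=1$ already flags this: your criterion says $g$ has a regular cycle on $1$-sets only when some cycle of length $1$ carries the full order, i.e.\ only when $g=1$, which is absurd.

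The paper's proof avoids whole-cycle selections entirely. For the direction $m\geq n_{k+1}$, it takes $g$ with disjoint cycles of lengths $p_1,\ldots,p_{k+1}$ (and fixed points if $m>n_{k+1}$) and observes that any $k$-set, having only $k$ points, meets the supports of at most $k$ of these cycles; the missed cycle $\sigma_i$ then fixes the $k$-set pointwise, so the $g$-orbit has size at most $|g|/p_i$. No padding subtlety arises. For the direction $m<n_{k+1}$, the paper first notes that the bound $m<n_{k+1}$ forces $|g|$ to be the lcm of at most $k$ of the cycle lengths, say $\ell_1,\ldots,\ell_s$ with $s\leq k$; it then constructs a $k$-set by taking from each of these $s$ cycles an arc of $x_i$ \emph{consecutive} points with $0<x_i<\ell_i$ (and, if $k$ exceeds $\sum_i(\ell_i-1)$, some points outside these cycles). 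A proper non-empty arc of consecutive points in a cycle of length $\ell_i$ is fixed setwise by $\sigma_i^n$ only when $\ell_i\mid n$, so the resulting $k$-set lies on a regular $g$-cycle. The remaining arithmetic, making $\sum x_i=k$ feasible, is a short case split using $m\geq 2k$.
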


Theorem~\ref{actiononsets} will be proved in Section~\ref{alt}. 
% NB the iff fails unless we assume that $k\leq m/2$.
We believe moreover that primitive permutation groups containing elements 
without regular cycles have very restrictive structure. 
Our investigations of these groups led us to make the following conjecture, and then
we attempted to prove it. 
We say that a subgroup $G$ of $\Sym(\Om)$ \emph{preserves a product structure on 
$\Omega$} if $\Omega=\Delta^r$ and $G$ is isomorphic to a subgroup  of 
$\Sym(\Delta)\wr \Sym(r)$ in its product action.

\begin{conjecture}\label{conj}
Let $(G,\Om)$ be primitive such that some element has no regular cycle. 
Then there exist integers $k\geq1, r\geq1$ and $m\geq5$ such that
$G$ preserves a product structure on $\Omega=\Delta^r$ with $|\Delta|=\binom{m}{k}$, and $\Alt(m)^r
\vartriangleleft G\leqslant \Sym(m)\wr \Sym(r)$, where $\Sym(m)$ induces its $k$-set action on $\Delta$.
\end{conjecture}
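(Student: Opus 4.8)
The plan is to run through the finite primitive permutation groups via the O'Nan--Scott theorem and to show that, outside the product-action family named in the statement, every primitive group has all of its elements equipped with a regular cycle. I would sort the O'Nan--Scott types into three groups: the \emph{uniform} types (affine, simple diagonal, compound diagonal, twisted wreath, and the two holomorphic types), where I expect to prove directly that no exceptions occur; the \emph{almost simple} type, which must produce exactly the $k$-set actions of $\Alt(m)$ and $\Sym(m)$; and the \emph{product-action} type, which assembles the exceptional family from the almost simple pieces.

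For the uniform types I would construct a regular cycle for an arbitrary element by hand. In the affine case $G=V\rtimes G_0$ with $V=\mathbb{F}_p^{\,d}$ and $G_0\leqslant\GL(d,p)$ irreducible, an element $g=(v,A)$ has order $|A|\,p^{e}$ for some $e\geq0$ determined by the action of $A$ on $V$, and I would realise this order as an orbit length by choosing a vector adapted to the rational canonical form of $A$, with the translation part supplying exactly the $p$-power factor. For the diagonal and twisted-wreath types the socle $T^{\ell}$ contains a (nearly) regular normal subgroup, and element orders are governed by orders of elements of $T$ together with the permutation of coordinates, so a regular cycle can be assembled coordinate by coordinate; the two holomorphic types are similar. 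None of these types should contribute an exception.

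The engine of the reduction is a product-action lemma. Writing $W=\Sym(\Delta)\wr\Sym(r)$ in product action on $\Omega=\Delta^{r}$ and $g=(h_{1},\dots,h_{r};\sigma)\in W$, I would compute the cycle structure of $g$ from the cycles of $\sigma$: for the $j$-th cycle $(i_{1}\,\dots\,i_{\ell_j})$ of $\sigma$ one forms the \emph{loop product} $\pi_{j}=h_{i_{\ell_j}}\cdots h_{i_{1}}\in\Sym(\Delta)$, whence $|g|=\lcm_{j}(\ell_{j}|\pi_{j}|)$, while the maximal $g$-cycle length equals $\lcm_{j}(\ell_{j}M_{j})$, with $M_{j}$ the maximal cycle length of $\pi_{j}$ on $\Delta$ (the orbit decomposes over the $\sigma$-cycles, independently in each block). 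Thus $g$ has a regular cycle whenever each $M_{j}=|\pi_{j}|$, that is, whenever every loop product has a regular cycle on $\Delta$; in particular this holds if every element of the component group $H\leqslant\Sym(\Delta)$ has a regular cycle. Consequently a primitive group of product-action type contains an element without a regular cycle only when $H$ does; since the loop products lie in $H$, the failure is inherited from $H$, and the almost simple analysis below forces $\soc(H)=\Alt(m)$ in a $k$-set action. This is exactly the family in the statement, with the case $r=1$ recovering the pure almost simple exceptions.

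It remains to treat the almost simple type, $T\leqslant G\leqslant\Aut(T)$ with $T$ non-abelian simple, and to prove that the only elements lacking a regular cycle occur for $T=\Alt(m)$ in a $k$-set action. For $T=\Alt(m)$ I would use the classification of maximal subgroups of $\Sym(m)$: the intransitive maximal subgroups give the $k$-set actions, whose exceptions are pinned down by Theorem~\ref{actiononsets} (the natural action being the case $k=1$), while for the imprimitive (partition) actions and for the actions on cosets of a primitive maximal subgroup I would show directly that every element has a regular cycle, so that no further exceptions appear. The sporadic groups need only a finite check over conjugacy classes and primitive actions, and for the exceptional groups of Lie type one argues from the known element orders together with the list of maximal subgroups, in the spirit of Siemons and Zalesskii. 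The remaining case, $T$ a classical group, is where I expect the real difficulty: one must exhibit a regular cycle for every conjugacy class --- semisimple, unipotent and mixed --- in every primitive action, including the subspace actions and all the actions arising from Aschbacher's subgroup classes, and there is no single uniform argument covering all element types and all actions simultaneously. This classical case is the main obstacle, and it is precisely the case to which the conjecture is reduced.
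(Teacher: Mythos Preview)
Your overall plan coincides with the paper's: run the O'Nan--Scott theorem, eliminate the non--almost-simple types, use a product-action lemma to reduce wreath products to their components, and then treat almost simple groups family by family, ending with the classical groups as the residual case. Note, however, that the statement you are attempting is a \emph{conjecture} in the paper; the paper does not prove it, but proves the reduction Theorem~\ref{thrm:main}, deferring the classical case to~\cite{GS}. Your final paragraph lands at exactly the same endpoint, so what you have written is a plan for the reduction, not for the conjecture itself. On the organisational side, the paper's version of O'Nan--Scott (from~\cite{cameron}) absorbs the twisted-wreath and holomorph types into ``preserves a product structure'', so they are swept up by Theorem~\ref{thrm:PA} rather than handled separately as you propose.

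The main place your sketch diverges from, and is weaker than, the paper is the diagonal type. Your one-line ``assemble a regular cycle coordinate by coordinate'' is not a workable description: the action is on cosets of a full diagonal, an element mixes the $\ell+1$ coordinates via $\Sym(\ell+1)$ \emph{and} twists by $\Aut(T)$, and there is no obvious direct construction. The paper instead proves the fixed-point-ratio criterion of Lemma~\ref{lemma:apeman} and then bounds $\fpr_\Omega(g^{|g|/p})$ case by case (Lemmas~\ref{SDlem1}--\ref{SDlem3}), feeding in number-theoretic bounds on $\omega(n)$ and on maximal element orders in $\Sym(m)$. This fixed-point-ratio machinery is in fact the engine for most of the paper: it also drives the sporadic case (via character values and Lemma~\ref{obvious2}), the exceptional case (via the Lawther--Liebeck--Seitz bounds~\cite{LLS}), and the ``primitive point-stabiliser'' case for $\Alt(m)$ (via Mar\'oti's and Guralnick--Magaard's bounds). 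Your proposal underplays this uniform mechanism. For the affine case, your rational-canonical-form idea is in the right spirit, but the paper takes a cleaner route: it first proves that for $g\in\GL_d(q)$ the vectors lying in regular $g$-cycles \emph{span} $V$ (Lemma~\ref{GL}), and then embeds $\AGL_d(q)\hookrightarrow\GL_{d+1}(q)$ to read off the affine result without analysing translations directly.
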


We note that each value of $k$ and $r$, and each $m\geq n_{k+1}$ (as defined in Theorem~\ref{actiononsets}) 
yield examples in Conjecture~\ref{conj}. This is true since, by Theorem~\ref{actiononsets}, $\Sym(m)$ contains 
elements with no regular cycles in their $k$-set action on $\Delta$, and we observe in Lemma~\ref{lem:trivial} 
that each such element is associated with elements of $\Sym(m)\wr \Sym(r)$ with no regular cycles in their
product action on $\Delta^r$.
 
We go a long way towards proving Conjecture~\ref{conj} in this paper, reducing to the case where $G$ is 
an almost simple classical group. The final case of classical groups is settled by an analysis in 
\cite{GS}, as we discuss below.

\begin{theorem}\label{thrm:main}
Let $(G,\Om)$ be a primitive permutation group containing an element with no regular cycle, and suppose that 
$(G,\Om)$ is not one of the groups in Conjecture~$\ref{conj}$. 
Then $G$ preserves a product structure on $\Omega=\Delta^r$, for some $r\geq1$, and 
$T^r\vartriangleleft G\leqslant H\wr \Sym(r)$ for some classical group $H$ with simple socle 
$T$ such that $(H,\Delta)$ is primitive and some element has no regular cycle. 
\end{theorem}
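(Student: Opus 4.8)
The plan is to run through the O'Nan--Scott classification of the primitive group $(G,\Om)$ and, for each type, either prove that every element of $G$ has a regular cycle --- making the hypothesis vacuous --- or show that $(G,\Om)$ satisfies the conclusion or is one of the configurations of Conjecture~\ref{conj}. The engine is a lemma on product actions generalising Lemma~\ref{lem:trivial}. Write an element of $H\wr\Sym(r)$ acting on $\Delta^r$ as $g=(h_1,\dots,h_r)\sigma$ with $\sigma\in\Sym(r)$; let $C_1,\dots,C_t$ be the cycles of $\sigma$, of lengths $\ell_1,\dots,\ell_t$, and for each $j$ let $h^{(j)}\in H$ be the product of the entries $h_i$, $i\in C_j$, taken in the cyclic order of $C_j$ (well defined up to conjugacy and choice of starting point). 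A direct computation gives $|g|=\lcm_j\bigl(\ell_j\,|h^{(j)}|\bigr)$, and for $\delta=(\delta_1,\dots,\delta_r)$ with $i_j\in C_j$ a chosen coordinate, $|\delta^{\langle g\rangle}|=\lcm_j\bigl(\ell_j\,|\delta_{i_j}^{\langle h^{(j)}\rangle}|\bigr)$. Consequently, if each $h^{(j)}$ has a regular cycle on $\Delta$ then $g$ has a regular cycle on $\Delta^r$; in particular, if every element of $H$ has a regular cycle on $\Delta$ then so does every element of $H\wr\Sym(r)$ on $\Delta^r$. Conversely, if $g\in G$ has no regular cycle and $G$ is of product-action type with primitive component $(H_0,\Delta)$ (so $\Om=\Delta^r$ and $\soc G=\soc(H_0)^r$), then some loaded element $h^{(j)}$ lies in $H_0$ and has no regular cycle on $\Delta$, so $(H_0,\Delta)$ already has an element with no regular cycle, and its O'Nan--Scott type is almost simple or simple diagonal.

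This reduces product-action type, and likewise compound-diagonal type, to its almost simple or simple-diagonal component: if that component has classical socle we obtain the conclusion with the corresponding $r$; if it has socle $\Alt(m)$ in a $k$-set action then $|\Delta|=\binom{m}{k}$ and $\Alt(m)^r\vartriangleleft G\leq\Sym(m)\wr\Sym(r)$, a Conjecture~\ref{conj} configuration; and the remaining possibilities for the component are ruled out by the analyses below. For $G$ of affine type, $G=V\rtimes G_0\leq\AGL(d,p)$, and for $g\in G$ with linear part $x$ of order $n$ one has $|g|\in\{n,pn\}$; a direct analysis of the cycle lengths of $g$ on $V$ then yields a $g$-orbit of length $|g|$, using among other things that the fixed spaces $\Fix(x^{n/\ell})$ of proper powers of $x$ are proper subspaces only boundedly many of which can be hyperplanes, and so cannot together cover $V$ --- hence every element has a regular cycle and affine type does not occur. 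For $G$ of twisted-wreath type the socle acts regularly, so its elements have regular cycles, and a short argument disposes of the rest. For $G$ of simple-diagonal type one shows directly that every element has a regular cycle, using that the simple direct factors of $\soc G$ have elements of many orders and that point stabilisers are comparatively small. This leaves the almost simple case.

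Now $G$ is almost simple with socle $T$, primitive, with an element with no regular cycle, and we must show $T$ is classical. If $T$ is sporadic, a finite computation over the primitive representations of $\Aut(T)$ and its subgroups shows every element has a regular cycle, a contradiction. If $T$ is exceptional of Lie type, the same follows from the known descriptions of element orders and of maximal subgroups, ruling out in each action an element all of whose cycle lengths properly divide its order. If $T=\Alt(m)$, then either the action is a $k$-set action, and Theorem~\ref{actiononsets} pins the elements with no regular cycle down to the cases $m\geq n_{k+1}$ with $G\in\{\Alt(m),\Sym(m)\}$, i.e.\ Conjecture~\ref{conj} with $r=1$; or the action is on partitions into equal parts, or one of the finitely many actions arising from exceptional isomorphisms for small $m$ (some of which are covered instead by the classical analysis), and one shows that every element has a regular cycle --- using that the degree is large and that an element-order-realising cycle can be located, the small cases being checked directly. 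This leaves $T$ classical, which is exactly the conclusion with $r=1$, $\Delta=\Om$, $H=G$; the determination of which such $(G,\Om)$ actually possess an element with no regular cycle is carried out in~\cite{GS}.

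I expect the main obstacles to be twofold: first, controlling \emph{all} non-$k$-set primitive actions of $\Alt(m)$ and $\Sym(m)$ uniformly --- showing every element has a regular cycle there --- which is where the interplay between element orders in $\Alt(m)$ and the combinatorics of the action is most delicate, and where Theorem~\ref{actiononsets} has to be invoked rather than re-proved; and second, the exceptional-Lie-type case, which admits no uniform statement and must be organised as a case analysis against the maximal subgroup structure. The diagonal and affine types, while fiddly, are essentially mechanical, and once the product-action lemma is in hand the passage through compound-diagonal and product-action types is bookkeeping, provided one keeps careful track of the distinction between $H$, its socle $T$, and the component group $H_0$ of the given primitive representation.
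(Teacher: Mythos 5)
Your overall route (an O'Nan--Scott reduction driven by a product-action lemma, then type-by-type elimination, quoting \cite{GS} for the classical case) is exactly the paper's, but three specific steps have genuine problems. First, the orbit-length formula powering your engine lemma is false. Take $r=2$, $\sigma=(1\,2)$, $g=(h,1)\sigma$ with $|h|=n$ odd, so $h^{(1)}=h$ and $|g|=2n$; if $a$ lies in a regular cycle of $h$, then for a suitable $k$ the point $(a,a^{h^{k}})$ is fixed by $g^{n}$ (indeed $g^{n}=(u,v)\sigma$ with $uv=h^{n}=1$, and $(a,b)$ is fixed precisely when $b=a^{u}$), so its $g$-cycle has length at most $n<|g|$, whereas your formula predicts $2n$ for \emph{every} choice of the second coordinate. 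Thus choosing $\delta_{i_j}$ in a regular cycle of $h^{(j)}$ does not by itself force $|\delta^{\langle g\rangle}|=\lcm_j\bigl(\ell_j|h^{(j)}|\bigr)$; the consequence you want is true, but it needs the careful choice of point made in Theorem~\ref{thrm:PA} (normalise each block to $(h^{(j)},1,\ldots,1)\sigma_j$ by conjugation, take the constant tuple there, and treat the blocks with $h^{(j)}=1$ by a non-constant tuple). Second, your affine argument does not work as described: a union of proper subspaces certainly can cover $V$ once there are more than $q$ of them (over $\mathbb{F}_2$ three suffice), the number of primes dividing $|x|$ is not bounded in terms of $q$, and the subspaces $\Fix(x^{n/\ell})$ can have fixed-point ratios summing close to $1$, so ``cannot together cover $V$'' requires a real estimate; more decisively, even granting a regular cycle for the linear part $x$, this gives nothing for $g$ with nontrivial translation part. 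The paper has to prove the much stronger Lemma~\ref{GL} (the regular vectors of $g$ \emph{span} $V$), precisely so that after embedding $\AGL_d(q)$ in $\GL_{d+1}(q)$ one can find a regular vector outside the hyperplane of vectors with last coordinate $0$; your sketch hides the entire difficulty in ``a direct analysis of the cycle lengths''.

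Third, and most seriously, your case division for socle $\Alt(m)$ is incomplete. Besides $k$-set actions and uniform-partition actions there is the infinite family of primitive actions whose point stabiliser is \emph{primitive} on $\{1,\ldots,m\}$ (cosets of a primitive maximal subgroup, e.g.\ $\Sym(p+1)$ on cosets of $\PGL_2(p)$ for every prime $p$); these are neither partition actions nor ``finitely many actions arising from exceptional isomorphisms for small $m$''. This is where the paper does substantial work (Proposition~\ref{thm2}): Mar\'oti's order bound, the Guralnick--Magaard $4/7$ bound on fixed-point ratios in the natural action, Stirling estimates on $|x^G|$, the Massias--Nicolas--Robin bound on element orders, and machine checks for $m\le 144$, which also uncover the genuine exception $\Sym(6)$ acting on cosets of $\PGL_2(5)$ (harmless for the theorem only because $\Alt(6)\cong\PSL_2(9)$ is classical, and in the final argument one has $m\ge 7$). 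Without this case --- and with the twisted-wreath and diagonal paragraphs left as gestures (the diagonal case needs the fixed-point-ratio estimates of Lemmas~\ref{SDlem1}--\ref{SDlem3} together with number-theoretic bounds, with separate handling of $\Alt(m)$ and small $\PSL_2(q)$ components) --- the proposal is a correct plan but not yet a proof.
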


As we mentioned above (see Lemma~\ref{lem:trivial}), each primitive classical group
$(H,\Delta)$ containing an element with no regular cycle, and each positive integer $r$,
leads to `product action examples' in Theorem~\ref{thrm:main}. Moreover classical groups with this 
property are known, but the examples we mention are permutationally isomorphic to 
groups in Conjecture~\ref{conj}: for example, 
the group $H={\rm P}\Gamma{\rm L}(2,4)\cong \Sym(5)$ of degree $5$ contains an element of order $6$
(see Theorem~\ref{actiononsets}). Also in Proposition~\ref{thm2} we identify a second example, 
namely in $H= \Sym(6)$ with socle $\Alt(6)\cong {\rm PSL}(2,9)$, acting on the cosets of
${\rm PGL}(2,5)$, some elements of order 6 have no regular cycle.
The results of \cite{EZal,SZ2,SZ1} deal with the cases where $H$ is a simple classical group. 
The general case of almost simple classical groups will be handled by Simon Guest and the third
author in \cite{GS}.

% \begin{theorem}\label{thrm:main}
% Denote by $\mathcal{S}$ the class of all pairs $(H,\Delta)$ where $H$ is an almost simple  group acting primitively on $\Delta$ and having a permutation $h$ with no cycles of length $|h|$. 
% \begin{enumerate}
%  \item[(a)] If $(H,\Delta)\in\mathcal{S}$ then either $H$ is a classical group or $\soc(H)=A_m$ and $\Delta$ is the set of all $k$-subsets of an $m$-set.
%  \item[(b)] If $G$ is a primitive group on $\Omega$ that contains a permutation $g$ with no cycle of length $|g|$, then $\Omega=\Delta^\ell$ for some $\ell\geq 1$ and $\soc(H)^\ell\leqslant G\leqslant H\wr S_\ell$  where $(H,\Delta)\in\mathcal{S}$.
% \end{enumerate}
% \end{theorem}

%More can be deduced about the membership of $(H,\Delta)\in\mathcal{S}$ in the case where $H$ is a classical group. Indeed \cite{SZ2} showed that any group $H$ with $\PSL(n,q)\leqslant H\leqslant\PGL(n,q)$ is not in $\mathcal{S}$ and \cite{EZal} showed that a simple classical group is not in $\mathcal{S}$, where we naturally exclude the cases where $\soc(H)$ is isomorphic to an alternating group. These results followed earlier work \cite{SZ1} that any simple group other than $A_n$ with a 2-transitive permutation representation is not in $\mathcal{S}$.

%It would be possible to use the bounds on the fixed point ratio of classical groups of Burness \cite{} to deal with all non-subspace actions of classical groups for large enough dimension.

Clearly Theorem~\ref{thrm:main} applies to any transitive permutation
group $G$ having a system of imprimitivity $\mathcal{B}$ with $G$
acting faithfully and primitively on $\mathcal{B}$. In particular,
Theorem~\ref{thrm:main} immediately applies to the class of finite
quasiprimitive groups.

 The following interesting results arise while proving Theorem \ref{thrm:main}.

\begin{corollary}
Each element of $\GL(d,q)$ has a regular cycle on vectors.
\end{corollary}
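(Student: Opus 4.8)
\emph{Plan.} I would argue directly from the $\langle g\rangle$-module structure of the natural module $V=\mathbb{F}_q^d$, without invoking the main theorems of the paper. Fix $g\in\GL(d,q)$ and set $n=|g|$. Regard $V$ as a module over the principal ideal domain $\mathbb{F}_q[x]$, with $x$ acting as $g$; by the structure theorem (equivalently, by putting $g$ in rational canonical form) we may write $V=C_1\oplus\cdots\oplus C_m$, where each $C_j=\mathbb{F}_q\langle g\rangle v_j$ is a cyclic $\mathbb{F}_q\langle g\rangle$-module with $C_j\cong \mathbb{F}_q[x]/(f_j)$ for some non-unit polynomial $f_j$, and $f_j$ is coprime to $x$ because $g$ is invertible.

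\emph{Two elementary facts.} First, the order $n_j$ of $g$ acting on $C_j$ is the least positive integer with $f_j\mid x^{n_j}-1$ (finite since $x$ is a unit modulo $f_j$), and this $n_j$ is also the \emph{period} of the generator $v_j$, i.e.\ the least $e>0$ with $g^e v_j=v_j$: indeed $g^e v_j=v_j$ forces $g^e$ to fix every element of $C_j=\mathbb{F}_q\langle g\rangle v_j$, so $n_j\mid e$. Second, since $g^e=1$ on $V$ if and only if $g^e=1$ on each $C_j$, we have $n=\lcm(n_1,\dots,n_m)$.

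\emph{Key step.} Put $v=v_1+\cdots+v_m$ (which is nonzero, assuming $d\geq 1$, as the $C_j$ are independent nonzero submodules). Because the sum $C_1\oplus\cdots\oplus C_m$ is direct and $g^e v_j-v_j\in C_j$, the equality $g^e v=v$ holds if and only if $g^e v_j=v_j$ for every $j$, i.e.\ if and only if $n_j\mid e$ for all $j$, i.e.\ if and only if $n=\lcm(n_1,\dots,n_m)$ divides $e$. Hence the period of $v$ is exactly $n$, so the $\langle g\rangle$-orbit of $v$ has size $n=|g|$, which is precisely a regular cycle of $g$ on vectors.

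\emph{Main obstacle.} There is little to obstruct this argument beyond setting up the decomposition correctly: one must take $v$ to be the sum of the module-theoretic cyclic generators $v_j$ rather than, say, arbitrary basis vectors, and one must note that invertibility of $g$ makes each $n_j$ finite. An alternative packaging is to observe that a regular cycle through $v$ exists precisely when $v\notin\bigcup_{p\mid n}\Fix(g^{n/p})$, a union of proper subspaces; but the ``lcm of periods'' argument above sidesteps any subspace-counting and works uniformly in $q$ and $d$.
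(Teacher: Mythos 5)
Your argument is correct, and it takes a genuinely lighter route than the paper. The paper deduces this corollary from Lemma~\ref{GL}, which is much stronger: it shows that the set $\mathcal{S}_g$ of vectors lying in regular cycles \emph{spans} $V$, and the proof there proceeds by induction on a decomposition into indecomposable $\mathbb{F}_q\langle g\rangle$-modules, with a Jordan-form analysis over an extension field in the indecomposable case and a careful extra claim about $\mathcal{S}_g-\mathcal{S}_g$. That extra strength is not gratuitous: it is exactly what is needed to prove Theorem~\ref{thrm:affine} for $\AGL_d(q)$, where one must find a regular vector with a prescribed nonzero last coordinate, i.e.\ a regular vector outside a hyperplane, and mere nonemptiness of $\mathcal{S}_g$ would not suffice. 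Your proof, by contrast, uses the cyclic (rational canonical form) decomposition $V=C_1\oplus\cdots\oplus C_m$ and the observation that the period of a cyclic generator $v_j$ equals the order of $g$ on $C_j$ (since $g^e$ fixing $v_j$ forces $g^e$ to centralize all of $\mathbb{F}_q\langle g\rangle v_j$); summing the generators then gives a vector of period $\lcm(n_1,\dots,n_m)=|g|$. This "add one good vector from each summand" step is essentially the same device the paper uses in the $s>1$ step of the proof of Lemma~\ref{GL}, but you avoid the induction, the Jordan-form case analysis, and any spanning considerations, because you only need existence. So your proof is a clean, self-contained verification of the corollary as stated, at the cost of not yielding the stronger spanning statement the paper needs downstream for the affine case.
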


\begin{corollary}\label{cor1.5}
Each automorphism of a finite non-abelian simple group $T$ 
has a regular cycle in its natural action on $T$.
\end{corollary}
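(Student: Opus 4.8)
The plan is to realise the natural action of $\Aut(T)$ on the set $T$ as an action of elements of a suitable primitive permutation group, and then apply Theorem~\ref{thrm:main}. First I would form the holomorph $G:=T\rtimes\Aut(T)$ and let it act on $\Omega:=T$, with the normal subgroup $T$ acting by left translation and each $\phi\in\Aut(T)$ acting as $t\mapsto\phi(t)$. A short check shows this action is faithful, that the stabiliser of $1\in\Omega$ is $\Aut(T)$, and that $\Aut(T)$ is maximal in $G$: the only $\Aut(T)$-invariant subgroups of the nonabelian simple group $T$ are $1$ and $T$, while $K\cap T\vartriangleleft K$ for every $K$ with $\Aut(T)\leqslant K\leqslant G$. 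Hence $(G,\Omega)$ is primitive of degree $|T|$. The point of this construction is that the element $\alpha\in\Aut(T)\leqslant G$ induces on $\Omega=T$ precisely the permutation $t\mapsto\alpha(t)$, so a regular cycle of the element $\alpha\in G$ in $\Omega$ is the same thing as a regular cycle of the automorphism $\alpha$ in its natural action on $T$. It therefore suffices to show that every element of $G$ has a regular cycle in $\Omega$.

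Next I would check that $G$ has two distinct minimal normal subgroups. One is the base group $T$ (acting by left translation), which is nonabelian simple and normal in $G$. For the second, let $T_R\leqslant\Sym(\Omega)$ be the right regular representation of $T$; a direct computation shows $T_R\leqslant G$ (each right translation is the product of a left translation and an inner automorphism), that $T_R$ is centralised by the base group and normalised by $\Aut(T)$, hence $T_R\vartriangleleft G$, and that $T_R$ meets the base group trivially. So $T_R$ is a second minimal normal subgroup of $G$, distinct from the first. (Equivalently, $(G,\Omega)$ is primitive of O'Nan--Scott type HS.)

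Now I would invoke Theorem~\ref{thrm:main} in contrapositive form. Each group appearing in Conjecture~\ref{conj} or in the conclusion of Theorem~\ref{thrm:main} is a primitive group $G'$ with $S^r\vartriangleleft G'\leqslant K\wr\Sym(r)$ acting in product action on $\Delta^r$, where $r\geq1$, $S$ is nonabelian simple, and $S=\soc(K)$ with $K$ either $\Sym(m)$ or an almost simple classical group $H$. If $r=1$ then $G'$ is almost simple, with the simple group $S$ as its unique minimal normal subgroup; if $r\geq2$ then primitivity of the product action forces $G'$ to permute the $r$ coordinates transitively, so $S^r$ is a minimal normal subgroup of $G'$, and it is the unique one because $\cent{K\wr\Sym(r)}{S^r}=1$. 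In either case $G'$ has a unique minimal normal subgroup, so by the previous paragraph $(G,\Omega)$ is permutationally isomorphic to no such $G'$. Since $(G,\Omega)$ is primitive and lies outside both families, Theorem~\ref{thrm:main} shows $(G,\Omega)$ contains no element without a regular cycle; in particular every $\alpha\in\Aut(T)$ has a regular cycle in its natural action on $T$.

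The routine parts here are the primitivity of the holomorph and the translation between cycles of $\alpha\in G$ and cycles of the automorphism $\alpha$. I expect the only point needing care to be the third paragraph --- confirming that the holomorph action genuinely avoids the product-structure families of Conjecture~\ref{conj} and Theorem~\ref{thrm:main}. Counting minimal normal subgroups seems to be the cleanest invariant for this; one should just be slightly careful in the $r=2$ case to note that primitivity of a product action on $\Delta^2$ forces the coordinate swap to be present, ruling out $G'\leqslant H\times H$, so that $S^2$ is indeed a single minimal normal subgroup.
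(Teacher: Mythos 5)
Your proof is correct, but it takes a genuinely different route from the paper's. The paper disposes of this corollary in two lines: the group $(T\times T)\cdot\langle\sigma\rangle$ acting on $T$ — essentially your holomorph, viewed inside the diagonal-type group $W$ with $\ell=1$ from Section~\ref{sec:diagonal} — is a primitive group of Diagonal type, so Theorem~\ref{thrm:SD} immediately gives $\sigma$ a regular cycle. You construct essentially the same permutation group (the holomorph, with socle $T_L\times T_R$ acting on $T$), and your verifications of its primitivity and of the existence of two distinct minimal normal subgroups are all sound; but instead of recognizing it as a diagonal-type group to which Theorem~\ref{thrm:SD} applies directly, you invoke the global reduction Theorem~\ref{thrm:main} and rule out the exceptional families by counting minimal normal subgroups. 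That exclusion is also correct: a primitive group as in Conjecture~\ref{conj} or in the conclusion of Theorem~\ref{thrm:main} is almost simple when $r=1$, and for $r\geq2$ primitivity forces transitivity on the $r$ coordinates while $C_{K\wr\Sym(r)}(S^r)=1$, so in every case the socle $S^r$ is the unique minimal normal subgroup, which the holomorph fails to have. Importantly, your argument uses only Theorem~\ref{thrm:main} as proved in this paper (its conclusion merely asserts a product structure over a classical group), so it does not depend on the deferred classical analysis in \cite{GS}, and there is no circularity since Corollary~\ref{cor1.5} is not used in the proof of Theorem~\ref{thrm:main}. The trade-off is that your route needs the full machinery of Theorem~\ref{thrm:main} plus this structural bookkeeping, whereas the paper's route isolates exactly the ingredient required — the diagonal-type case, Theorem~\ref{thrm:SD} — which is also the result your own construction would let you cite directly once you note that the holomorph contains the socle $T^2$ of a diagonal-type group on $T$.
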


%\begin{theorem}\label{thrm:main}
%Let $G$ be a minimal counterexample to Conjecture~\ref{conj}. Then $G$ is almost simple and the socle of $G$ is a classical group.
%\end{theorem}

%Our approach actually shows something more.

%\begin{theorem}\label{thrm:main2}
%Denote by $\mathcal{S}$ the class of the almost simple groups $H$
%having a permutation $h$ with no cycles of length $|h|$. Then the socle of every element of $\mathcal{S}$ is %either an alternating group or a classical group.

%Moreover, if $G$ is a finite primitive group admitting a permutation group $g$ with no cycles of length $|g|$, %then $G\leq H\wr \Sym(r)$ admits a product action and $H$ lies in $\mathcal{S}$.
%\end{theorem}

We make the following brave conjecture.

\begin{conjecture}
There exists an absolute positive constant $c$ such that, if $(G,\Om)$ is a primitive group,  but
not one of the groups in Conjecture~$\ref{conj}$, then for each $g\in G$,
$$
\frac{o_g}{c_g}>c
$$
where $o_g$ is the number of regular cycles of $g$, and $c_g$ is the total number of cycles of 
$g$ (including cycles of length 1).
\end{conjecture}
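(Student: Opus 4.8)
The plan is to run through the finite O'Nan--Scott types exactly as in the proof of Theorem~\ref{thrm:main}, but now tracking the ratio $o_g/c_g$ rather than merely the existence of a regular cycle. The key simplification is to reformulate both quantities in terms of a single profile attached to $g$: writing $n=|g|$ and $a_d=|\Fix(g^d)|$ for each divisor $d$ of $n$, Möbius inversion gives the number of points lying on $g$-cycles of length exactly $d$ as $\sum_{e\mid d}\mu(d/e)a_e$, so
$$
o_g=\frac1n\sum_{e\mid n}\mu(n/e)\,a_e,\qquad c_g=\sum_{d\mid n}\frac1d\sum_{e\mid d}\mu(d/e)\,a_e .
$$
Thus $o_g/c_g$ depends only on the arithmetic function $d\mapsto a_d$ on the divisor lattice of $n$, and for a coset action on $G/M$ each $a_d$ is governed by how the cyclic group $\langle g\rangle$ and its subgroups meet the point stabiliser $M$. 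Proving the conjecture therefore amounts to showing that, for the relevant pairs $(G,M)$, this profile can never be so heavily weighted towards the proper divisors of $n$ that $o_g$ becomes a vanishingly small fraction of $c_g$.

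For the product-action, diagonal and twisted-wreath types one proceeds as in Theorem~\ref{thrm:main}: the cycle structure of an element of $H\wr\Sym(r)$ in product action is built from the cycle structures of its induced components in $H$ together with the coordinate permutation, and one expects multiplicative behaviour of the shape $o_g/c_g\gtrsim\prod_i(o_{h_i}/c_{h_i})$ for the components $h_i$. A priori this decays with $r$, so the real content is to show that for the groups \emph{not} in Conjecture~\ref{conj} one never has to iterate a component with a very small ratio; the $k$-set actions of $\Sym(m)$ and $\Alt(m)$, which form precisely the excluded family, should be the only source of such components (this is consistent with Theorem~\ref{actiononsets} and Lemma~\ref{lem:trivial}). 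Granting this, one is reduced to proving a uniform lower bound for $G$ primitive of affine or almost simple type.

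The affine case, $G\leqslant\AGL(d,p)$ acting on $\mathbb{F}_p^d$, should be a direct linear-algebra count. Writing $g$ as a translation twisted by a linear map $\ell$ of order $n_0$, the $g$-cycle through a vector $v$ has length a multiple of $n_0$ determined by the rational canonical form of $\ell$ on the $\langle\ell\rangle$-invariant subspace generated by $v$ and by the position of the translation part in each indecomposable block; a positive proportion of vectors avoid all the proper $\ell$-invariant subspaces responsible for short cycles, which should force the profile $(a_d)$ to satisfy the required inequality with an absolute constant. The diagonal and twisted-wreath types then reduce to the almost simple case by the same component analysis.

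The bulk of the work, and the main obstacle, is the almost simple case, and within it the classical groups acting on the cosets of an arbitrary maximal subgroup $M$; the alternating, sporadic and exceptional cases should be amenable to the methods of \cite{EZal,SZ2,SZ1} together with element-order data. For each semisimple, or unipotent-times-semisimple, element $g$ of a classical group one needs uniform control of the profile $a_d=|\Fix_{G/M}(g^d)|$, i.e.\ of the number of $G$-conjugates of powers of $g$ that lie in $M$, across all Aschbacher classes of $M$. This is exactly the data assembled (qualitatively) in \cite{GS}, and the quantitative refinement needed here --- that the fixed-point profile cannot concentrate on the proper divisors of $|g|$ --- will require bounds on element orders and conjugacy-class sizes in classical groups of Fulman--Guralnick type, combined with fixed-point-ratio estimates of Burness and others. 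The danger to be ruled out is that for certain large maximal subgroups (parabolic, or subfield and classical subgroups) there might exist elements $g$ almost all of whose cycles are short; I expect this to be the delicate point, and to need a case division by Aschbacher class running parallel to that in \cite{GS}.
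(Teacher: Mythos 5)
This statement is not proved in the paper at all: it is introduced as a ``brave conjecture'' and left open, so there is no proof of the paper's to compare yours with. What you have written is accordingly judged as a standalone attempt, and it is a research programme rather than a proof. The Möbius-inversion reformulation of $o_g$ and $c_g$ in terms of the profile $d\mapsto a_d=|\Fix_\Omega(g^d)|$ is correct and a reasonable starting point, but every step that would actually deliver the absolute constant $c$ is missing or asserted with ``should'' and ``I expect'': the affine count, the diagonal and twisted wreath reduction, and above all the classical almost simple case, which you defer to quantitative refinements of \cite{GS} and of fixed-point-ratio/element-order bounds that are not formulated, let alone proved. Note also that the paper's own techniques do not transfer: the arguments behind Theorems~\ref{thrm:PA}, \ref{thrm:affine} and \ref{thrm:SD} (spanning sets of vectors, sums of fixed-point ratios less than $1$) certify the \emph{existence} of one regular cycle and say nothing about the \emph{proportion} of cycles that are regular, which is the whole content of this conjecture.

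The most concrete gap is in your product-action step. A bound of the shape $o_g/c_g\geq\prod_i(o_{h_i}/c_{h_i})$ decays exponentially in the number $r$ of components, and your proposed fix --- that one ``never has to iterate a component with a very small ratio'' --- does not address this: even if every admissible component $(H,\Delta)$ had ratio bounded below by, say, $1/2$, iterating it $r$ times would give only $2^{-r}$, which is not bounded below by any absolute constant. To make the reduction work you would need to show that for an element of $H\wr\Sym(r)$ in product action the ratio itself (not a product of component ratios) stays bounded, e.g.\ by showing that the cycle lengths on $\Delta^r$ concentrate on values attaining the full order via an lcm argument; nothing in your sketch supplies this, and it is not a formality. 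Similarly, in the affine case the claim that ``a positive proportion of vectors avoid all the proper $\ell$-invariant subspaces responsible for short cycles'' needs a uniform constant independent of $d$, $q$ and of the number of prime divisors of $|g|$, and the danger you yourself identify in the classical case (elements almost all of whose cycles are short for large maximal subgroups) is exactly the open problem, not a detail to be checked. As it stands, the proposal identifies the right quantities to control but proves none of the required estimates.
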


We prove Theorem~\ref{actiononsets} in Section~\ref{alt}. In order to prove Theorem~\ref{thrm:main}, we
investigate the cycle lengths of elements of finite primitive permutation groups $(G,\Om)$  
according to the O'Nan-Scott type of $G$. 
The O'Nan-Scott Theorem~\cite[Theorem 4.6]{cameron} asserts that if $G$ is a 
primitive group that does not preserve a product structure then $G$ has one of the following three types:
\begin{itemize}
\item[(1)] \textit{Affine type},
\item[(2)] \textit{Diagonal type},
\item[(3)] \textit{Almost simple}.
\end{itemize}

Primitive groups preserving a product structure are dealt with in Theorem \ref{thrm:PA}, 
and we consider those of affine type in Theorem \ref{thrm:affine} and those of diagonal 
type in Theorem \ref{thrm:SD}. We investigate the almost simple primitive groups 
in Section \ref{sec:AStype}, completely dealing with the sporadic groups, 
alternating and symmetric groups, and exceptional groups of Lie type.

\section{Basic lemmas}\label{reduction}

% In~\cite{25} eight types
%of primitive groups are defined (depending on the structure and on the
%action of the socle), namely HA (\textit{Holomorphic Abelian}), AS
%(\textit{Almost Simple}), SD
%(\textit{Simple Diagonal}), CD
%(\textit{Compound Diagonal}), HS
%(\textit{Holomorphic Simple}), HC
%(\textit{Holomorphic Compound}), TW
%(\textit{Twisted wreath}), PA
%(\textit{Product Action}), and it is shown in~\cite{LPS1} that every primitive group
%belongs to exactly one of these types. We use this subdivision into
%types in our work.

We start our analysis with some elementary (but very useful) lemmas.

\begin{lemma}\label{basic}
Let $G$ be a permutation group on $\Omega$. Assume that for every
$g\in G$, with $|g|$ square-free, $g$ has a regular cycle. 
Then every element of $G$ has a regular cycle.
\end{lemma}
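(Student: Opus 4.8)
The plan is to reduce an arbitrary element to the square-free case by decomposing its cyclic action into parts governed by coprime factors of its order. Write $|g| = p_1^{a_1}\cdots p_t^{a_t}$, and set $n = p_1\cdots p_t$, the square-free part. The element $h = g^{|g|/n}$ has order exactly $n$, and $h \in \langle g\rangle \le G$, so by hypothesis $h$ has a regular cycle on $\Omega$, i.e. there is a point $\omega$ whose $\langle h\rangle$-orbit has size $n = |h|$. I would then argue that the $\langle g\rangle$-orbit of $\omega$ must have size $|g|$, which exhibits a regular cycle for $g$.

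The key step is the orbit-size comparison. Since $h$ is a power of $g$, the $\langle h\rangle$-orbit $\omega^{\langle h\rangle}$ is contained in the $\langle g\rangle$-orbit $\omega^{\langle g\rangle}$, so $|\omega^{\langle h\rangle}|$ divides $|\omega^{\langle g\rangle}|$, and in particular $n \mid |\omega^{\langle g\rangle}|$. On the other hand, $|\omega^{\langle g\rangle}| = |g|/|G_\omega \cap \langle g\rangle|$ divides $|g|$. So $|\omega^{\langle g\rangle}|$ is a divisor of $|g| = p_1^{a_1}\cdots p_t^{a_t}$ that is divisible by $n = p_1\cdots p_t$; this does not immediately force it to equal $|g|$. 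To close the gap I would instead choose $h$ more cleverly, or rather apply the hypothesis prime-by-prime: the point is to show the $p_i$-part of $|\omega^{\langle g\rangle}|$ is $p_i^{a_i}$ for each $i$.

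Here is the cleaner route I would actually take. For each prime $p = p_i$ dividing $|g|$, consider the element $g_p$ obtained by projecting $g$ to its $p'$-part, i.e. $g_p = g^{p^{a_i}}$ has order $|g|/p^{a_i}$... actually the slickest formulation: I claim it suffices to show that for the single element $h$ of square-free order $n$ above, a regular cycle of $h$ through $\omega$ forces the full $g$-cycle through $\omega$ to be regular, and this does hold once we also know $g$ itself is a ``minimal counterexample'' type object — but to avoid circularity, the correct elementary argument is: let $C = \omega^{\langle g\rangle}$ have length $\ell \mid |g|$. Then $g$ acts on $C$ as an $\ell$-cycle, so $h = g^{|g|/n}$ acts on $C$ as a permutation all of whose cycles have length $\ell/\gcd(\ell, |g|/n)$. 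For $\omega$ to lie in an $h$-cycle of length $n$ we need $\ell/\gcd(\ell,|g|/n) = n$, which combined with $\ell \mid |g|$ forces $\ell = |g|$ by a direct arithmetic check (writing $\ell = \prod p_i^{b_i}$ with $b_i \le a_i$: the condition gives $b_i = a_i$ for all $i$). Hence $C$ is a regular cycle of $g$, completing the proof.

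The main obstacle is purely bookkeeping: making sure the arithmetic step — that $\ell \mid |g|$ together with ``$h=g^{|g|/n}$ has a cycle of length $n$ inside the $\ell$-cycle'' implies $\ell = |g|$ — is stated correctly in terms of the prime factorizations, and that the reduction genuinely only invokes the hypothesis on elements of square-free order (namely $h$), so there is no hidden circular dependence. No deep input is needed; the lemma is a soft transfer from $g$ to its square-free ``radical'' power $h$.
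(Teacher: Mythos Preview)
Your final argument is correct: setting $n=\prod_i p_i$, $h=g^{|g|/n}$ has order $n$ and lies in $G$, so by hypothesis there is $\omega$ in a regular $h$-cycle; if the $g$-cycle through $\omega$ has length $\ell=\prod_i p_i^{b_i}$ then the $h$-cycle through $\omega$ has length $\ell/\gcd(\ell,|g|/n)=\prod_i p_i^{b_i-\min(b_i,a_i-1)}$, and this equals $\prod_i p_i$ only when each $b_i=a_i$. (The meandering in the middle of your write-up, with the false start on $g_p$, should be cut.)

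The paper's route is different and arguably cleaner: it inducts on the number of prime divisors of $|g|$ counted with multiplicity. If $p^2\mid |g|$ then by induction $g^p$ has a regular cycle, giving $\omega$ with $|\omega^{\langle g^p\rangle}|=|g|/p$; then $|\omega^{\langle g\rangle}|\in\{|g|,|g|/p\}$, and the second option would force $|\omega^{\langle g^p\rangle}|=|g|/p^2$, a contradiction. Your approach jumps straight to the square-free radical power in one step at the cost of a multivariable arithmetic check, while the paper peels off one prime at a time so that each step is a one-line dichotomy. Both are elementary; the paper's version avoids the $\gcd$ bookkeeping entirely.
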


\begin{proof}
Let $g\in G$. We argue by induction on the number of prime divisors of
$|g|$ (counted with multiplicity). If $|g|$ is square-free, then there
is nothing to
prove. Thus assume that $|g|$ is divisible by $p^2$, for some prime
$p$. Now, since the number of prime divisors of $|g^p|=|g|/p$ is less
than the number of prime divisors of $|g|$, we may apply induction and
hence there exists $\omega\in \Omega$ with $|\omega^{\langle g^p\rangle
}|=|g|/p$. Thus $|\omega^{\langle g\rangle}|=|g|$ or $|g|/p$. However, the latter case would imply that $|\omega^{\langle g^p\rangle
}|=|g|/p^2$, a contradiction. Thus the cycle of $g$ containing $\omega$ has length $|g|$.
\end{proof}

The previous lemma allows us to consider only elements of square-free
order. Before proceeding we introduce a definition.

\begin{definition}{\rm Let $G$ be a permutation group on a
    finite set $\Omega$, let $x\in G$, and
$\Fix_\Omega(x)=\{\omega\in\Omega\mid \omega^x=\omega\}$. Then 
  $\fpr_\Omega(x)=|\Fix_\Omega(x)|/|\Omega|$ is called the \emph{fixed-point-ratio} of $x$.}
\end{definition}

The following lemma is part of the folklore and the sketch of a proof can
be found in~\cite[Lemma~$2.5$]{LS}.
\begin{lemma}\label{obvious3}
Let $G$ be a transitive group on $\Omega$, let $H$ be a point
stabilizer and let $g$ be in
$G$.
Then $$\mathrm{fpr}_\Omega(g)=\frac{|g^G\cap H|}{|g^G|}.$$
\end{lemma}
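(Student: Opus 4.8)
The plan is to prove Lemma~\ref{obvious3} by a direct double-counting argument on the set of pairs $(\omega, x)$ with $\omega \in \Omega$, $x \in g^G$, and $\omega^x = \omega$. First I would fix a point $\alpha \in \Omega$ with stabilizer $H = G_\alpha$, and set up the bijection between $\Omega$ and the right cosets $H\backslash G$ via $\alpha^h \mapsto \ldots$; under this identification an element $x$ fixes the point corresponding to the coset $Hg_0$ precisely when $g_0 x g_0^{-1} \in H$. The key observation is that $G$ acts by conjugation on the conjugacy class $g^G$, that this action is transitive, and that the stabilizer of $g$ under this action is $\cent{G}{g}$, so $|g^G| = |G|/|\cent{G}{g}|$.

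The main computation then proceeds as follows. I would count $N = |\{(\omega,x) : \omega \in \Omega,\ x \in g^G,\ \omega^x = \omega\}|$ in two ways. Summing over $\omega$ first gives $N = \sum_{\omega \in \Omega} |g^G \cap G_\omega|$; since $G$ is transitive and conjugation sends $g^G \cap G_\omega$ to $g^G \cap G_{\omega'}$, all these intersections have the same size, so $N = |\Omega| \cdot |g^G \cap H|$. Summing over $x$ first gives $N = \sum_{x \in g^G} |\Fix_\Omega(x)|$; here I would argue that $|\Fix_\Omega(x)|$ is constant as $x$ ranges over $g^G$ (conjugate permutations have the same number of fixed points, indeed the same cycle type), so $N = |g^G| \cdot |\Fix_\Omega(g)|$. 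Equating the two expressions yields $|g^G| \cdot |\Fix_\Omega(g)| = |\Omega| \cdot |g^G \cap H|$, and dividing by $|\Omega| \cdot |g^G|$ gives exactly $\fpr_\Omega(g) = |g^G \cap H|/|g^G|$.

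The only step requiring a little care is the claim that $|g^G \cap G_\omega|$ is independent of $\omega$: given $\omega, \omega' \in \Omega$, transitivity provides $t \in G$ with $\omega^t = \omega'$, and then conjugation by $t$ maps $G_\omega$ to $G_{\omega'}$ and maps $g^G$ to itself (since $g^G$ is a union — in fact a single — conjugacy class, hence conjugation-invariant), so it restricts to a bijection $g^G \cap G_\omega \to g^G \cap G_{\omega'}$. I do not anticipate any real obstacle; this is entirely elementary, which is why it is folklore. Since the excerpt already cites \cite[Lemma~2.5]{LS} for a sketch, I would keep the write-up brief, essentially just presenting the two-way count and the transitivity remark, rather than belabouring the standard orbit-counting facts.
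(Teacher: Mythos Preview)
Your double-counting argument is correct and is precisely the standard folklore proof; the paper itself does not give a proof at all, simply citing \cite[Lemma~2.5]{LS} for a sketch. Your write-up could be trimmed (the coset bijection and the orbit--stabilizer remark about $|g^G|$ are not actually used in the count), but the core two-way count of $\{(\omega,x)\in\Omega\times g^G : \omega^x=\omega\}$ together with the transitivity observation is exactly what is needed.
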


The following lemma is one of our main tools.

\begin{lemma}\label{lemma:apeman}Let $G$ be a transitive permutation
  group on $\Omega$ and let $g$ be in $G$. Then 
  $g$ has a regular cycle if and only if
\begin{equation}\label{Fix}
\bigcup_{\substack{p \divides |g|\\p\,\mathrm{
        prime}}}\mathrm{Fix}_\Omega(g^{|g|/p})\subsetneq\Omega, 
\end{equation}
In particular, if 
$$
\sum_{\substack{p\divides |g|\\p\, \mathrm{
      prime}}} \mathrm{fpr}_\Omega(g^{|g|/p}) <1,$$
then $g$ has a regular cycle.
\end{lemma}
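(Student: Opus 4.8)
The plan is to characterise when a point $\omega\in\Omega$ lies on a regular cycle of $g$ and then convert this into the displayed union condition. First I would observe that $\omega$ lies on a cycle of $g$ of length strictly less than $|g|$ precisely when $g^d$ fixes $\omega$ for some proper divisor $d$ of $|g|$; and since every proper divisor of $|g|$ divides $|g|/p$ for some prime $p\divides|g|$, we have $\omega^{g^d}=\omega$ for some proper divisor $d$ if and only if $\omega^{g^{|g|/p}}=\omega$ for some prime $p\divides|g|$. Hence the set of points lying on \emph{no} regular cycle of $g$ is exactly $\bigcup_{p\divides|g|}\Fix_\Omega(g^{|g|/p})$, and $g$ has a regular cycle if and only if this union is a proper subset of $\Omega$, which is (\ref{Fix}).

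For the ``in particular'' clause I would note that, for any finite collection of subsets of $\Omega$, the size of their union is at most the sum of their sizes, so
\begin{equation*}
\left|\bigcup_{\substack{p\divides|g|\\ p\ \mathrm{prime}}}\Fix_\Omega(g^{|g|/p})\right|\le \sum_{\substack{p\divides|g|\\ p\ \mathrm{prime}}}|\Fix_\Omega(g^{|g|/p})|=|\Omega|\sum_{\substack{p\divides|g|\\ p\ \mathrm{prime}}}\fpr_\Omega(g^{|g|/p}).
\end{equation*}
If the final sum is strictly less than $1$, the right-hand side is strictly less than $|\Omega|$, so the union is proper and the first part of the lemma gives a regular cycle for $g$.

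The argument is essentially elementary, so there is no serious obstacle; the one point requiring a little care is the reduction from ``$g^d$ fixes $\omega$ for some proper divisor $d$ of $|g|$'' to ``$g^{|g|/p}$ fixes $\omega$ for some prime $p\divides|g|$'', which relies on the fact that if $d\mid|g|$ and $d<|g|$ then $d\mid|g|/p$ for at least one prime $p$ dividing $|g|$, together with the monotonicity $\Fix_\Omega(g^d)\subseteq\Fix_\Omega(g^e)$ whenever $d\mid e$. I would also remark that the converse of the ``in particular'' statement fails in general, since the sets $\Fix_\Omega(g^{|g|/p})$ may overlap substantially, so only the one-directional implication is claimed.
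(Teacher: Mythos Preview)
Your proof is correct and follows essentially the same route as the paper's: both arguments identify the complement of the union $\bigcup_{p}\Fix_\Omega(g^{|g|/p})$ as precisely the set of points lying on a regular cycle of $g$, using that a cycle length strictly less than $|g|$ must divide $|g|/p$ for some prime $p$. Your treatment is slightly more explicit (passing through arbitrary proper divisors before reducing to the maximal ones $|g|/p$, and spelling out the union bound for the ``in particular'' clause), but the underlying idea is identical.
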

\begin{proof}

Denote by $\Delta$ the set on the left hand side of~\eqref{Fix}. Assume that $g$ has
a cycle of length $|g|$ and let $\alpha$ be a point from this
cycle. Then $\alpha^{g^{|g|/p}}\neq \alpha$ and $\alpha\notin
\Fix_\Omega(g^{|g|/p})$, for every prime divisor $p$
of $|g|$. Thus $\alpha\notin \Delta$.

Conversely, assume that $\Delta\neq \Omega$ and let $\alpha\in
\Omega\setminus \Delta$. If $|\alpha^{\langle g\rangle}|<|g|$, then
$|\alpha^{\langle g\rangle}|$ divides $|g|/p$, for some
prime divisor $p$ of $|g|$. Thus $\alpha^{g^{|g|/p}}=\alpha$ and
$\alpha\in \Fix_\Omega(g^{|g|/p})$, contradicting our choice of $\alpha$.
\end{proof}

\section{Primitive groups preserving a product structure}\label{PA type}

We begin with the following simple observation.

\begin{lemma}
\label{lem:trivial}
Let $H\leqslant\Sym(\Delta)$. If $h\in H$ has no regular cycle in $\Delta$, then 
$(h,1,\ldots,1)\in H\wr \Sym(\ell)$ has no regular cycle in $\Delta^\ell$.
\end{lemma}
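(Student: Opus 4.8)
The plan is to analyze the cycle structure of the element $(h,1,\ldots,1)$ acting on $\Delta^\ell$ directly in terms of the cycles of $h$ on $\Delta$, and to show that the absence of a regular cycle for $h$ forces the absence of one for $(h,1,\ldots,1)$. First I would recall that in the product action of $H\wr \Sym(\ell)$ on $\Delta^\ell$, an element of the base group $(h_1,\ldots,h_\ell)$ sends a point $(\delta_1,\ldots,\delta_\ell)$ to $(\delta_1^{h_1},\ldots,\delta_\ell^{h_\ell})$; hence for our element $g=(h,1,\ldots,1)$ we simply have $(\delta_1,\ldots,\delta_\ell)^g=(\delta_1^h,\delta_2,\ldots,\delta_\ell)$. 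So the orbit of $(\delta_1,\ldots,\delta_\ell)$ under $\langle g\rangle$ is $\{(\delta,\delta_2,\ldots,\delta_\ell) : \delta\in \delta_1^{\langle h\rangle}\}$, which has size exactly $|\delta_1^{\langle h\rangle}|$. In particular the length of every cycle of $g$ on $\Delta^\ell$ equals the length of some cycle of $h$ on $\Delta$, and conversely every cycle length of $h$ is realized.

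Next I would record that $|g|=|h|$, which is immediate since $g^k=(h^k,1,\ldots,1)$. Combining this with the previous paragraph: $g$ has a regular cycle on $\Delta^\ell$ if and only if some cycle of $g$ has length $|g|=|h|$, which happens if and only if some cycle of $h$ on $\Delta$ has length $|h|$, i.e.\ if and only if $h$ has a regular cycle on $\Delta$. Since by hypothesis $h$ has no regular cycle on $\Delta$, we conclude $g=(h,1,\ldots,1)$ has no regular cycle on $\Delta^\ell$, as required.

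There is really no hard step here; the statement is essentially a bookkeeping observation once the product action formula is written down correctly. The only point that needs a little care is making sure the convention for the product action is stated unambiguously (which coordinate the first factor acts on, and that the top group $\Sym(\ell)$ plays no role for base-group elements), so that the cycle-length correspondence is exactly an equality rather than merely a divisibility. Alternatively, one could phrase the whole argument through Lemma~\ref{lemma:apeman}: for each prime $p$ dividing $|g|=|h|$, $\Fix_{\Delta^\ell}(g^{|g|/p}) = \Fix_\Delta(h^{|h|/p})\times\Delta^{\ell-1}$, so the union in~\eqref{Fix} equals $\Delta^\ell$ exactly when the corresponding union for $h$ equals $\Delta$; but the direct cycle-length argument is cleaner and I would use that.
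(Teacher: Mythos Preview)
Your argument is correct and complete. The paper itself offers no proof of this lemma (it is presented as a ``simple observation'' and left to the reader), so your direct computation of the $\langle g\rangle$-orbits on $\Delta^\ell$ together with the observation $|g|=|h|$ is exactly the kind of verification the authors had in mind.
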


Next we prove our main result for this section.

\begin{theorem}\label{thrm:PA}
Let $H\leqslant\Sym(\Delta)$ such that $|\Delta|>1$ and each 
element of $H$ has a regular cycle in $\Delta$. Then each element of $H\wr \Sym(\ell)$ 
has a regular cycle in $\Delta^\ell$.  
\end{theorem}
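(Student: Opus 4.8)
The plan is to reduce to the case where the top group is generated by a single $\ell$-cycle, and in that case to describe the cycle structure of the wreath element in terms of the cycle structure of its associated ``cycle product'' in $H$.

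\emph{Reduction to a single top-cycle.} Write $W=H\wr\Sym(\ell)$ acting on $\Delta^\ell$ and let $w=(h_1,\dots,h_\ell;\sigma)\in W$. Decompose $\sigma$ into disjoint cycles $\gamma_1,\dots,\gamma_t$ (fixed points counted as $1$-cycles) with supports $C_1,\dots,C_t$, so $\{1,\dots,\ell\}=\bigsqcup_j C_j$ and $\Delta^\ell=\prod_j\Delta^{C_j}$. Since $\sigma$ stabilises each $C_j$, the element $w$ preserves this factorisation and acts on the factor $\Delta^{C_j}$ as $w_j:=(h_i\ (i\in C_j);\gamma_j)$; thus $w$ acts on $\Delta^\ell$ as the ``product'' of the $w_j$ over disjoint blocks of coordinates. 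Hence $w^n=1$ iff $w_j^{\,n}=1$ for all $j$, so $|w|=\lcm_j|w_j|$; and for a point $\bar\delta=(\bar\delta^{(1)},\dots,\bar\delta^{(t)})$ the length of its $\langle w\rangle$-orbit is $\lcm_j$ of the lengths of the $\langle w_j\rangle$-orbits of the $\bar\delta^{(j)}$. Consequently, if each $w_j$ has a regular cycle in $\Delta^{C_j}$, say through $\bar\delta^{(j)}$, then $\bar\delta=(\bar\delta^{(j)})_j$ lies in a $\langle w\rangle$-orbit of length $\lcm_j|w_j|=|w|$, i.e.\ a regular cycle of $w$. So it suffices to treat the case where $\sigma$ is a single $d$-cycle (the case $d=1$ being exactly the hypothesis on $H$).

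\emph{The single $d$-cycle case.} Let $\sigma=(1\,2\,\cdots\,d)$ and $w=(h_1,\dots,h_d;\sigma)$. A standard computation (conjugating $w$ by a suitable element of the base group $H^d$) shows that $w$ is conjugate in $H\wr\Sym(d)$ to $w'=(g,1,\dots,1;\sigma)$, where $g=h_1h_2\cdots h_d$ is the product of the $h_i$ around the cycle; since conjugate permutations of $\Delta^d$ have the same cycle type, it is enough to exhibit a regular cycle of $w'$. One checks directly that $(w')^d=(g,g,\dots,g;1)$, which acts coordinatewise on $\Delta^d$ as $g$, so $|w'|=d\,|g|$. If $g=1$ then $w'$ is the cyclic coordinate shift, of order $d$, and any point $(\beta,\alpha,\dots,\alpha)$ with $\alpha\neq\beta$ (available because $|\Delta|>1$) has its $d$ cyclic shifts pairwise distinct, hence lies on a cycle of length $d=|w'|$. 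If $g\neq1$, use the hypothesis on $H$ to pick $\alpha$ on a regular cycle of $g$, equivalently with $\langle g\rangle\cap H_\alpha=1$, and take the diagonal point $(\alpha,\dots,\alpha)$. A short induction shows that after $kd+r$ steps ($0\le r<d$) its image under $w'$ has $\alpha^{g^k}$ in coordinate $1$ and in coordinates $r+2,\dots,d$, and $\alpha^{g^{k+1}}$ in coordinates $2,\dots,r+1$. For this image to equal $(\alpha,\dots,\alpha)$ one needs $g^k\in H_\alpha$, forcing $g^k=1$ and hence $|g|\mid k$, and, if $r\ge1$, also $\alpha^{g^{k+1}}=\alpha$, which given $g^k=1$ forces $g\in H_\alpha$ and so $g=1$, a contradiction. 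Thus the orbit of $(\alpha,\dots,\alpha)$ closes up only at multiples of $d|g|=|w'|$, so $(\alpha,\dots,\alpha)$ lies on a regular cycle of $w'$, and therefore $w$ has a regular cycle.

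I expect the only real difficulty to lie in the single-cycle step, and within it in verifying that the chosen orbit does not close up prematurely. The passage from arbitrary $\sigma$ to a single cycle via $\lcm$'s is routine, and the conjugacy $w\sim w'$ is the standard fact that a wreath element with cyclic top part can have its base part concentrated in one coordinate; the actual content is the choice of point for $w'$ (the diagonal point built from a regular cycle of the cycle product $g$) together with the separate, easy treatment of the degenerate case $g=1$, where the diagonal point fails and a nearly-constant point must be used instead — this being the one place where the hypothesis $|\Delta|>1$ enters.
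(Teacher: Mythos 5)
Your proposal is correct and follows essentially the same route as the paper: reduce to a single top-cycle via the lcm/block decomposition, conjugate so the base part is concentrated in one coordinate, and then use the diagonal point built from a regular cycle of the cycle product $g$, with the nearly-constant point handling the degenerate case $g=1$ (the paper merely presents the single-cycle case first and the disjoint-cycle reduction second). The details of your orbit-closure computation match the paper's, so no further comment is needed.
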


\begin{proof}
%We use the notation above. Clearly, we may assume that $H$ is neither
%$\Alt(m)$ nor $\Sym(m)$ in its action on the $k$-subsets of
%$\{1,\ldots,m\}$. Without loss of generality we may also assume that
%$G=H\wr \Sym(\ell)$.
Let  $G=H\wr \Sym(\ell)$  and write $g\in G$ as $g=(h_1,\ldots,h_\ell)\sigma$, with
$\sigma\in \Sym(\ell)$ and with $h_i\in H$, for each $i\in
\{1,\ldots,\ell\}$. We start by considering the case that $\sigma$ is
a cycle of length $\ell$. In particular, replacing $g$ by a suitable
$G$-conjugate if necessary, we may assume that
$\sigma=(1,\ldots,\ell)$. For each $i\in \{2,\ldots,\ell\}$, write
$x_i=h_ih_{i+1}\cdots h_\ell$. Furthermore, write $x_1=1$ and
$n=(x_1,x_2,\ldots,x_\ell)\in H^\ell$.  Now, we have
\begin{eqnarray*}
g^n&=&n^{-1}(h_1,\ldots,h_\ell)\sigma
n=(x_1^{-1}h_1,x_2^{-1}h_2,\ldots,x_\ell^{-1}h_\ell)n^{\sigma^{-1}}\sigma\\
&&(x_1^{-1}h_1x_2,x_2^{-1}h_2x_3,\ldots,x_{\ell}^{-1}h_\ell x_1)\sigma=(h_1\cdots
h_\ell,1,\dots,1)\sigma.
\end{eqnarray*}
In particular, replacing $g$ by $g^n$, we may assume that
$h_2=\cdots=h_\ell=1$.  Write $x=(h_1,1,\ldots,1)$ so that $g=x\sigma$.
 Observe that $$g^\ell=xx^{\sigma^{-1}}\cdots x^{\sigma^{-(\ell-1)}}=(h_1,\ldots,h_1)$$
and so $|g|=|h_1|\ell$.

Now, let $\delta\in \Delta$ with
$|\delta^{\langle h_1\rangle}|=|h_1|$. Suppose first that $h_1=1$, 
that is, $x=1$. Let $\delta'\in \Delta\setminus\{\delta\}$ and take 
the point $\omega=(\delta',\delta,\ldots,\delta)\in \Omega$. Clearly, 
the cycle of $g=x\sigma=\sigma$ containing $\omega$ has length $\ell=
|g|$. 

Suppose then that $h_1\neq 1$ and take the point
$\omega=(\delta,\ldots,\delta)\in \Omega$. Note that $\delta^{h_1}\ne\delta$ by 
the definition of $\delta$. We show that the cycle of
$g$ containing $\omega$ has length $|g|$, from which the theorem follows
for $\sigma$ an $\ell$-cycle. Fix a positive integer $t$
and write $t=q\ell+r$, with $0\leq r<\ell$. Note that $(x_1,x_2,\dots,x_\ell)^{\sigma^{-1}}
=(x_{1\sigma}, x_{2\sigma},\dots,x_{\ell\sigma})$, and hence for $x=(h_1,1,\dots,1)$,
$x^{\sigma^{-1}}=(1,\dots,1,h_1)$, $x^{\sigma^{-2}}=(1,\dots,1,h_1,1)$, etc, so that

\begin{eqnarray*}
g^t&=&(x\sigma)^t=xx^{\sigma^{-1}}x^{\sigma^{-2}}\cdots
x^{\sigma^{-(t-1)}}\sigma^t\\
&=&(xx^{\sigma^{-1}}\cdots
x^{\sigma^{-(\ell-1)}})^q\,y_r\,\sigma^r\\
%&=&(h_1^{q},\ldots,h_1^q)(\underbrace{h_1,\ldots,h_1}_{r\,\mathrm{times}},1,\ldots,1)\sigma^r.
\end{eqnarray*}
where $y_r=1$ if $r=0$, and for $0<r<\ell$, 
\[
 y_r = xx^{\sigma^{-1}}\cdots
x^{\sigma^{-(r-1)}} = (h_1, \underbrace{1,\ldots,1}_{\ell-r\,\mathrm{times}}, 
\underbrace{h_1,\ldots,h_1}_{r-1\,\mathrm{times}}).
\]
Using this description of $g^t$, we see that if $w^{g^t}=\omega$ then
%$$(\delta,\ldots,\delta)=(\underbrace{\delta^{h_1^{q+1}},\ldots,\delta^{h_1^{q+1}}}_{r\,\textrm{times}},\underbrace{\delta^{h_1^q},\ldots,\delta^{h_1^q}}_{\ell-r\,\textrm{times}})^{\sigma^r}.$$
$$
(\delta,\ldots,\delta)= \begin{cases}
                         (\delta^{h_1^q},\ldots,\delta^{h_1^q}) &\mbox{if } r=0\\
(\delta^{h_1^{q+1}},\underbrace{\delta^{h_1^q},\ldots,\delta^{h_1^q}}_{\ell-r\,\textrm{times}},
\underbrace{\delta^{h_1^{q+1}},\ldots,\delta^{h_1^{q+1}}}_{r-1\,\textrm{times}})^{\sigma^r} &\mbox{if } 0<r<\ell.
                        \end{cases}
$$
In particular, by applying $\sigma^{-r}$ on both sides of this equality, 
we have $\delta=\delta^{h_1^q}$ and also, if $0<r<\ell$, then $\delta^{h_1^{q+1}}=\delta^{h_1^q}$. 
If $r>0$ then these two conditions imply that  $\delta=\delta^{h_1^q}= \delta^{h_1^{q+1}}=\delta^{h_1}$,
which is a contradiction. Thus $r=0$ and the condition  $\delta=\delta^{h_1^q}$
implies that $|h_1|=|\delta^{\langle h_1\rangle}|$ divides $q$.
Therefore $|g|=\ell
|h_1|$ divides $\ell q=t$. So the cycle of $g$ containing $\omega$ has
length $|g|$.

We now consider the case that $\sigma$ has more than one cycle in its
disjoint cycle decomposition.  Write $\sigma=\sigma_1\cdots \sigma_r$
with $\sigma_1,\ldots,\sigma_r$ the disjoint cycles of $\sigma$ of lengths
$\ell_1,\ldots,\ell_r$, respectively. Replacing $g$ by a suitable
$G$-conjugate, we may assume that
$\sigma=(1,\ldots,\ell_1)(\ell_1+1,\ldots,\ell_1+\ell_2)\cdots$. For each
$i\in \{1,\ldots,r\}$, take $n_i\in H^{\ell}$ such that the
$u^{\textrm{th}}$ coordinate of $n_i$ is $h_u$ if $u$ is in the support of
$\sigma_i$ and is $1$ otherwise. Write now $g_i=n_i\sigma_i$. Clearly,
$g=g_1\cdots g_r$ and $g_ig_j=g_jg_i$ for all $i,j$.  It follows that
$$
|g|=\lcm\{|g_i|\mid i\in \{1,\ldots,r\}\}.
$$
Moreover, we may view $g_i$
as an element of $H\wr \Sym(\ell_i)$ and so, by the previous case,
there exists $\omega_i\in \Delta^{\ell_i}$ with $|\omega_i^{\langle
  g_i\rangle}|=|g_i|$. Now it immediately follows that the cycle of
$g$ containing the point $(\omega_1,\ldots,\omega_r)\in \Delta^\ell$
has length $\lcm\{|g_i|\mid i\in \{1,\ldots,r\}\}=|g|$, and 
the proof is complete.
\end{proof}

\section{Primitive groups of Affine type}\label{HA type}

The main result of this section is Theorem~\ref{thrm:affine}, in which we
prove Theorem~\ref{thrm:main} in the special case where $G$ is a
primitive group of affine type.  In the rest of this section let $V$ be 
the $d$-dimensional vector space of row vectors over the field
$\mathbb{F}_q$ with $q$ elements, where $d\geq 1$ and $q$ is a prime power,
and consider $\GL_d(q)=\GL(V)$ acting naturally
on $V$.

\begin{lemma}\label{GL}
Let $g\in \GL_d(q)$. Write 
$$
\mathcal{S}_g:=\{v\in V\mid v \textrm{ lies in a regular cycle of }g\}.$$ Then
$\mathcal{S}_g$ spans the vector space $V$.
\end{lemma}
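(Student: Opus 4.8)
The plan is to show that the set $\mathcal{S}_g$ of vectors lying in a regular cycle of $g$ spans $V$ by reducing to the case of a single Jordan-block–like summand and then handling that case by a fixed-point-ratio estimate via Lemma~\ref{lemma:apeman}. First I would invoke Lemma~\ref{basic}: it suffices to prove the spanning statement for elements $g$ of square-free order. (Strictly, Lemma~\ref{basic} gives that \emph{some} vector lies in a regular cycle; but its inductive proof actually produces, from a regular cycle of $g^p$ through $\omega$, a regular cycle of $g$ through the same $\omega$, so one gets $\mathcal{S}_{g^p}\subseteq\mathcal{S}_g$ and the reduction to square-free order passes to the spanning claim.) So assume $|g|$ is square-free.

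Next I would decompose $V$ into $g$-invariant subspaces on which $g$ acts in a controlled way. Writing the $\langle g\rangle$-module $V$ as a direct sum $V = V_1 \oplus \cdots \oplus V_s$ of indecomposable $\mathbb{F}_q\langle g\rangle$-modules, it is enough to show that for each $i$ there is a vector in $V_i$ lying in a regular cycle of $g|_{V_i}$ — because if $v_i\in V_i$ lies in a regular cycle of $g|_{V_i}$ of length $|g|_{V_i}|$, one can combine the $v_i$ (in a single $V_i$ at a time, padding with $0$ elsewhere, noting $0$ is fixed) to see $\bigcup_i V_i^{\circ}$ spans, where $V_i^\circ\subseteq\mathcal{S}_g$ is nonempty; and since $\sum_i V_i = V$, finding one nonzero regular-cycle vector in each $V_i$ together with the fact that each such vector generates enough of $V_i$... — actually the cleanest route is: it suffices to produce, for each $i$, a regular cycle of $g$ meeting $V_i$ nontrivially, i.e. $\mathcal{S}_g\cap V_i\neq\{0\}$ for every $i$ in every such decomposition, equivalently $\mathcal{S}_g\cap W\neq\{0\}$ for every $g$-invariant subspace $W\neq 0$; then $\mathcal{S}_g$ cannot be contained in any proper subspace (take $W$ a $g$-invariant complement of the span), so it spans. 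Thus the real task is: for any $g$-invariant $W\ne 0$, some nonzero $w\in W$ lies in a regular cycle of $g$.

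For that, restrict to $g$ acting on an indecomposable summand $W$; since $|g|$ is square-free, I would argue $g|_W$ is semisimple away from the characteristic prime, so $W$ is (essentially) irreducible or a uniserial module with all composition factors isomorphic, and $|g|_W| = e\cdot p^\varepsilon$ with $\varepsilon\in\{0,1\}$, $p=\mathrm{char}$, and $e \mid q^{\dim W}-1$. Here the key computational step is to bound, for each prime divisor $\ell$ of $|g|_W|$, the fixed space $\mathrm{Fix}_W(g^{|g|_W|/\ell})$: in each case $g^{|g|_W|/\ell}$ acts on $W$ with fixed space of dimension at most $\dim W - 1$ (for $\ell$ coprime to $p$ this is because $g^{|g|/\ell}$ has an eigenvalue $\ne 1$ on a large subspace; for $\ell=p$, using that $g$ has a nontrivial unipotent part on $W$), so $\mathrm{fpr}_W(g^{|g_W|/\ell}) \le q^{-1}$. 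Summing over the at most... here is the catch: if $|g_W|$ has many prime divisors this sum could exceed $1$. So instead of the crude sum I would use the sharper form of Lemma~\ref{lemma:apeman}, namely that it suffices for the \emph{union} $\bigcup_\ell \mathrm{Fix}_W(g^{|g_W|/\ell})$ to be proper in $W$; since each term is a proper subspace of the $\mathbb{F}_q$-vector space $W$ and a vector space over a field is never a finite union of proper subspaces, the union is automatically proper, giving a regular cycle of $g|_W$ — hence a nonzero $w\in W\cap\mathcal{S}_g$.

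The main obstacle I anticipate is the last-paragraph bookkeeping: making precise that for each prime $\ell\mid|g_W|$ the space $\mathrm{Fix}_W(g^{|g_W|/\ell})$ is a \emph{proper} subspace of $W$ — i.e. that $g^{|g_W|/\ell}\ne 1$ has no full fixed space on the indecomposable module $W$. For $\ell$ coprime to the characteristic this is the statement that $g^{|g_W|/\ell}$, having order exactly $\ell$ on $W$, acts without fixing all of $W$, which follows once $W$ is a faithful-enough $\langle g\rangle$-module; for $\ell=p$ one needs that the $p$-part of $g$ acts as a nontrivial unipotent on the indecomposable summand $W$, which holds by the choice of decomposition and the structure of square-free-order elements. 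Once these two facts are in hand, the "vector space is not a union of proper subspaces" trick closes everything cleanly without any delicate inequality, and the passage from "$W\cap\mathcal{S}_g\ne\{0\}$ for all $g$-invariant $W\ne 0$" to "$\mathcal{S}_g$ spans $V$" is immediate by taking $W$ to be a $g$-invariant complement of $\langle\mathcal{S}_g\rangle$.
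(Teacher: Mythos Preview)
There are two genuine gaps in your approach.

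First, the assertion that ``a vector space over a field is never a finite union of proper subspaces'' holds only over infinite fields; over $\mathbb{F}_q$ it is false (for instance $\mathbb{F}_2^{\,2}$ is the union of its three one-dimensional subspaces, and more generally $q+1$ hyperplanes suffice to cover $\mathbb{F}_q^{\,d}$). Since the number of prime divisors of $|g|_W|$ can certainly exceed $q$, your argument for the indecomposable case does not close.

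Second, and more structurally, the reduction to $g$-invariant subspaces $W$ is broken in two places. A regular cycle of $g|_W$ has length $|g|_W|$, which is in general strictly smaller than $|g|$; so a vector lying in a regular cycle of $g|_W$ need not lie in $\mathcal{S}_g$, and the intermediate claim ``$\mathcal{S}_g\cap W\neq\{0\}$ for every nonzero $g$-invariant $W$'' is simply false. (Take $g$ unipotent of order $p>1$ and $W=\Fix_V(g)$: then $W\neq 0$ is $g$-invariant, yet every vector of $W$ lies in a $g$-cycle of length $1<|g|$, so $\mathcal{S}_g\cap W=\varnothing$.) Moreover the final step, producing a nonzero $g$-invariant complement to $\langle\mathcal{S}_g\rangle$, requires the $\mathbb{F}_q\langle g\rangle$-module $V$ to be semisimple, which fails precisely when $p\mid |g|$.

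The paper's proof confronts exactly these difficulties: on each indecomposable summand $W_i$ it proves the \emph{stronger} statement that $\mathcal{S}_{g_i}-\mathcal{S}_{g_i}$ spans $W_i$ (with one explicit unipotent exception over $\mathbb{F}_2$), and then shows that suitable sums $w_{i_0}+\sum_{i>t} w_i$ with $w_i\in\mathcal{S}_i$ lie in $\mathcal{S}_g$. Subtracting two such sums that differ in a single coordinate recovers each $W_i$ inside $\langle\mathcal{S}_g\rangle$; this difference trick is what replaces the nonexistent $g$-invariant complement.
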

\begin{proof}
Write $|g|=p_1^{n_1}\cdots p_r^{n_r}$, with $p_1,\ldots,p_r$ distinct
primes and with $n_i>0$, for each $i\in \{1,\ldots,r\}$. Write $W_g:=\langle \mathcal{S}_g\rangle$.

Now the vector space $V$ decomposes as a direct sum of
indecomposable $\mathbb{F}_q\langle g\rangle$-modules, say
$V=W_1\oplus \cdots \oplus W_s$ with $W_i$ an indecomposable
$\mathbb{F}_q\langle g\rangle$-module for each $i\in
\{1,\ldots,s\}$. (We recall that $W$ is said to be indecomposable if
$W$ cannot be written as $W=U\oplus U'$ with $U$ and $U'$ proper
non-trivial submodules.) Let $g_i$ be the linear transformation induced by $g$
on $W_i$. Clearly,
\begin{equation}\label{aff:1}
|g|=\lcm\{|g_i|\mid i\in \{1,\ldots,s\}\}.
\end{equation} 
For each $i$, let
$\mathcal{S}_i=\{w\in W_i\mid |w^{\langle
  g_i\rangle}|=|g_i|\}$.
We argue by induction
on the number $n(g):=\sum_{1}^rn_i+s$.
If $n(g)=1$ then $s=1$ and $\sum_in_i=0$ so that $g=1$ and 
$\mathcal{S}_g=V$ spans $V$. So assume that $n=n(g)>1$ and that the 
result holds for all elements $g'$ with $n(g')<n$.

Suppose first that $n_i>1$, for some $i\in
\{1,\ldots,r\}$. A moment's thought gives that
$\mathcal{S}_{g}=\mathcal{S}_{g^{p_i}}$. In particular, by induction,
we have that $\mathcal{S}_{g^{p_i}}$ spans $V$ and hence so does
$\mathcal{S}_g$. Thus we may assume that $n_i=1$ for each $i\in
\{1,\ldots,r\}$, that is, $g$ has square-free order. This
observation will simplify some of the computations later in the proof.

Next assume that $s=1$, that is, $V$ is indecomposable. 
In this special case we prove something slightly stronger: 
we show that $\mathcal{S}_g$ spans $V$ and, either 
$\mathcal{S}_g-\mathcal{S}_g:=\{v-w\mid v,w\in\mathcal{S}_g\}$ also spans $V$, 
or $q=2$ and $g$ is unipotent.  
Let $p$ be the characteristic of the field $\mathbb{F}_q$. Write $g=xu=ux$ with $x$ semisimple and with $u$
unipotent. Since $V$ is an indecomposable module,  $g$ is a cyclic matrix 
by \cite[Theorem 11.7]{HH}, and $g$ is 
conjugate, see for example~\cite[Lemma~$4.2$]{AIPS}, to a matrix (its Jordan 
canonical form)
\begin{equation}\label{eqmat}
\left(
\begin{array}{cccccc}
A&I&&      &\\
 &A&&     &\\
 & &\ddots&\ddots&\\
 & &&     A&I\\
 & &&      &A\\
\end{array}
\right)
\end{equation}
where $A$ and $I$ are $m\times m$-matrices over $\mathbb{F}_q$ (for
some divisor $m$ of
$d$), $I$ is the identity matrix and $A$ is the matrix induced by the
semisimple part $x$. Since $V$ is indecomposable, the
matrix $A$ acts irreducibly on $\mathbb{F}_q^m$. Furthermore, 
$m=d$ if and only if $g=x$ is semisimple. Replacing
$g$ by a conjugate, if necessary, we may assume that $g$ is as in~\eqref{eqmat}.
Since $A$ acts
irreducibly on $\mathbb{F}_q^m$, using Schur's lemma, we see that the
action of $A$ on $\mathbb{F}_q^m$ is equivalent to the action by
multiplication of a non-zero scalar of $\mathbb{F}_{q^m}$ on
the extension field $\mathbb{F}_{q^m}$.  Therefore every orbit of $A$ on
$\mathbb{F}_q^m\setminus\{0\}$ has length $|A|$.
Thus, if $m=d$, then $\mathcal{S}_g=V\setminus\{0\}$ and in this case both
$\mathcal{S}_g$ and $\mathcal{S}_g - \mathcal{S}_g$ span $V$. 
Hence we may assume that $m<d$, and so 
$|g|=p|A|$ since $|g|$ is square-free. 

Write $c=d/m>1$. In view of the shape of the matrix $g$, we write the
vectors of $V$ with only $c$ coordinates, where every coordinate is an 
element of $\mathbb{F}_{q^m}$.
A direct computation shows that
 $\mathcal{S}_g$ consists of all the
vectors of the form $v=(v_1,\ldots,v_c)$ with $(v_1,\ldots,v_{c-1})\neq
(0,\ldots,0)$. Now if $v\in\mathcal{S}_g$ then $\lambda v\in\mathcal{S}_g$ 
for all $\lambda\in \mathbb{F}_{q^m}\setminus\{0\}$. Note that if $q^m\neq 2$, 
then there exists $\alpha\in\mathbb{F}_{q^m}\backslash\{0,1\}$ and so for 
each $v\in \mathcal{S}_g$ we have $\alpha v$ and $(\alpha-1)v$ in $\mathcal{S}_g$. 
Thus $v=\alpha v-(\alpha-1)v\in \mathcal{S}_g-\mathcal{S}_g$. 
Hence $\mathcal{S}_g\subseteq \mathcal{S}_g-\mathcal{S}_g$, except possibly 
when $q^m=2$, that is, $p=2$, and $m= 1$. In particular, 
for $p\neq 2$, and for $m\ne 1$, our claim follows because clearly 
$\mathcal{S}_g$ spans $V$. For $p=2$ and $m=1$, the set $\mathcal{S}_g$ 
still spans $V$ and the element $g=u$ is cyclic unipotent of order $p$ 
and the proof of the claim is complete. In particular the inductive step is proved when $s=1$.

Assume now that $s>1$. Then by the claim above, for each $i$, 
$W_i=\langle\mathcal{S}_i\rangle$ and also, either 
$W_i=\langle\mathcal{S}_i-\mathcal{S}_i\rangle$, or $q=2$ 
and $g_i$ is unipotent. Suppose that $t$ of the elements $g_1,\ldots,g_s$ are unipotent, 
where $0\leq t\leq s$. If $t>0$ then, relabelling the index set, we may assume that 
$g_1,\ldots,g_t$ are unipotent, that is, the action of $g$ on each 
of $W_1,\ldots,W_t$ is unipotent. Since $g$ has square-free order, 
we have $|g_1|=\cdots=|g_t|=p$.
If $t=0$ set  $i_0=0$, $w_0=0\in V$ and $\mathcal{S}_{i_0}=\{0\}$, 
and if $t>0$ choose $i_0\in \{1,\ldots,t\}$ and $w_{i_0}\in 
\mathcal{S}_{i_0}$. Moreover, for each  $i\in \{t+1,\ldots,s\}$, choose $w_i\in
\mathcal{S}_i$.  Set $v=w_{i_0}+w_{t+1}+w_{t+2}+\cdots +w_s$, and write
$\ell=|v^{\langle g\rangle}|$. Since $v^{g^\ell}=v$ and since
$V=W_1\oplus \cdots\oplus W_s$ is a
$g$-invariant decomposition, we must have $w_i^{g^\ell}=w_i$, for each
$i\in \{t+1,\ldots,s\}$ and also for $i=i_0$. 
However, for each $i>t$, $w_i^{g^\ell}=w_i^{g_i^\ell}$ and hence
$\ell$ is divisible by $|g_i|$ by our choice of $w_i$. 
Also, if $t>0$ then $w_{i_0}^{g^\ell}=w_{i_0}^{g_{i_0}^\ell}$,
and hence, in this case, $\ell$ is divisible also by $|g_{i_0}|=p$. It
follows from~\eqref{aff:1}, for any $t$, that $|g|$ divides $\ell$. 
This shows that $|g|=|v^{\langle
  g\rangle}|$, that is, $v\in \mathcal{S}_g$.  In particular, we have
proved that
$$
\Delta:=\bigcup_{i_0\leq t}\left\{w_{i_0}+\sum_{i=t+1}^s w_i  
\mid w_{i_0}\in \mathcal{S}_{i_0}, w_i\in \mathcal{S}_i, \textrm{ for each
}i\in \{t+1,\ldots,s\}\right\}\subseteq
\mathcal{S}_g.
$$
Fix $i>t$ and choose distinct elements $v, v'\in\Delta$ with all coordinates 
equal except in position $i$. Note that this is possible since $g_i$ is 
not unipotent and hence $W_i=\langle\mathcal{S}_i-\mathcal{S}_i\rangle$. 
Now $v-v'\in \mathcal{S}_i-\mathcal{S}_i$ 
and hence $\mathcal{S}_i-\mathcal{S}_i\subseteq \langle \Delta\rangle
\subseteq \langle \mathcal{S}_g\rangle$. Since $\mathcal{S}_i-\mathcal{S}_i$ 
spans $W_i$ we obtain that $W_i\subseteq \langle \mathcal{S}_g\rangle$. 
Thus $W_{t+1}\oplus\cdots \oplus W_s\subseteq \langle\mathcal{S}_g\rangle$. 
From this and from the description of the elements of $\Delta$, we 
also have $\mathcal{S}_{i_0}\subseteq \langle \mathcal{S}_g\rangle$, 
for every $i_0\leq t$, and the result follows by induction.
\end{proof}

With Lemma~\ref{GL}, the main result of this section follows easily.

\begin{theorem}\label{thrm:affine}
Each element of $\AGL_d(q)$ has a regular cycle in $V$.
\end{theorem}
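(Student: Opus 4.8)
The plan is to deduce Theorem~\ref{thrm:affine} from Lemma~\ref{GL} by a short translation argument. Write a general element of $\AGL_d(q)$ as $g = t_v h$, where $h \in \GL_d(q)$ is the linear part and $t_v$ denotes translation by a vector $v \in V$; thus $w^g = w^h + v$ for all $w \in V$. The order $|g|$ is either $|h|$ (if $g$ is conjugate into $\GL_d(q)$, equivalently if $v$ lies in the image of $h-1$ in a suitable sense) or $p\cdot|h|$ where $p$ is the characteristic — this is the standard description of element orders in affine groups. Rather than split into cases immediately, I would first dispose of the case $g \in \GL_d(q)$ (up to conjugacy): then by Lemma~\ref{GL} the set $\mathcal{S}_g$ of vectors lying in regular cycles is nonempty (indeed spans $V$), so $g$ has a regular cycle, and conjugation preserves cycle structure. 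So the interesting case is when $|g| = p|h|$.

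For the main case, the key observation is the formula for powers of an affine map: $w^{g^k} = w^{h^k} + v^{h^{k-1}} + v^{h^{k-2}} + \dots + v^{h} + v = w^{h^k} + v(h^{k-1} + \dots + h + 1)$. So the $\langle g\rangle$-orbit of $w$ has length $\ell$ iff $\ell$ is minimal with $w^{h^\ell} = w$ and $v(h^{\ell-1}+\dots+1) = 0$. Here is where I would use the extra strength built into Lemma~\ref{GL}: its proof actually shows (in the indecomposable case, and then patched together) that $\mathcal{S}_h - \mathcal{S}_h$ spans $V$ unless $q=2$ and $h$ is unipotent — and more relevantly, one can run the same inductive bookkeeping as in Lemma~\ref{GL} but now tracking, for each indecomposable summand $W_i$, a vector $w_i$ that not only lies in a regular $h_i$-cycle but also satisfies a condition forcing the translation obstruction $v_i(h_i^{\ell}-1)/(h_i-1)$ (interpreted appropriately) to vanish only when $|h_i|$ divides $\ell$. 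Concretely, on each summand where $h_i$ is semisimple one controls the Jordan/cyclic structure via the $\mathbb{F}_{q^m}$-module description from the proof of Lemma~\ref{GL}; on a unipotent summand the order contributes a factor of $p$ and one chooses $w_i$ so that the combined "position and translation" orbit length is exactly $p|h_i|$.

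Alternatively — and this is probably the cleaner route to write — I would reduce to Lemma~\ref{lemma:apeman}: $g$ has a regular cycle iff $\bigcup_{p \mid |g|} \Fix_V(g^{|g|/p}) \subsetneq V$. By Lemma~\ref{basic} it suffices to treat $|g|$ square-free, so $|g| = p\cdot|h|$ with $|h| = p_2\cdots p_r$ square-free and coprime to $p$ (the case $p \mid |h|$ being absorbed, after taking a $p$-th power, into the already-handled $\GL$ situation or the induction). Then $g^{|g|/p} = g^{|h|}$ is a pure translation by $v(h^{|h|-1}+\dots+1) = v \cdot \frac{h^{|h|}-1}{h-1}$ composed with $h^{|h|}=1$, so it is translation by a fixed vector $v_0$, and $\Fix_V(g^{|h|})$ is either empty (if $v_0\neq 0$) or all of $V$ (if $v_0 = 0$); while for each prime $q'\mid |h|$, $g^{|g|/q'}$ is an affine map with linear part $h^{|g|/q'}$ of order $q'$, whose fixed set is an affine subspace of dimension at most that of $\Fix_V(h^{|g|/q'})$. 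One then checks that these fixed sets cannot cover $V$: the translation part contributes at most one proper affine subspace, and the others are controlled exactly by $\mathcal{S}_h$ being all of $V\setminus(\text{union of }h\text{-fixed stuff})$ via Lemma~\ref{GL}. The main obstacle, and the step I would spend the most care on, is the bookkeeping when $p \mid |h|$ as well, i.e.\ the translation direction $v$ interacts with the unipotent part of $h$ — here $(h^{|g|/p}-1)$ is not invertible and the translation can genuinely shift the orbit length, so one must verify that Lemma~\ref{GL}'s spanning conclusion (specifically the "$\mathcal{S}_h - \mathcal{S}_h$ spans unless $q=2$, $h$ unipotent" refinement) is exactly strong enough to absorb this, and handle the residual $q=2$ unipotent-linear-part case by hand.
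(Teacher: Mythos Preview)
Your plan takes a fundamentally different route from the paper, and you miss the one idea that makes the paper's proof almost a one-liner.

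The paper does not analyse affine orbits directly at all. Instead it embeds $\AGL_d(q)$ into $\GL_{d+1}(q)$ via
\[
(h,v)\ \longmapsto\ \left(\begin{array}{cc} h & 0\\ v & 1 \end{array}\right),
\]
under which the affine action of $(h,v)$ on $V$ becomes the linear action of the image matrix on the affine hyperplane $\{(w,1):w\in V\}\subset\mathbb{F}_q^{d+1}$. Now apply Lemma~\ref{GL} to the image $g'\in\GL_{d+1}(q)$: the set $\mathcal{S}_{g'}$ of vectors in regular $g'$-cycles \emph{spans} $\mathbb{F}_q^{d+1}$, so it contains some $(w,\lambda)$ with $\lambda\neq 0$. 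Since scalar multiplication preserves $g'$-cycle lengths, the rescaled vector $(\lambda^{-1}w,1)$ is also in a regular cycle; translating back, $\lambda^{-1}w\in V$ lies in a regular $g$-cycle. That is the entire argument. This also explains why Lemma~\ref{GL} proves that $\mathcal{S}_g$ \emph{spans} $V$ rather than merely that it is nonempty: the extra strength is there precisely so that, after passing to dimension $d+1$, one is guaranteed a regular-cycle vector with nonzero last coordinate.

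Your direct approach --- splitting on whether $g$ is conjugate into $\GL_d(q)$, writing out $w^{g^k}$, and then invoking Lemma~\ref{lemma:apeman} to show that the affine fixed sets $\Fix_V(g^{|g|/q'})$ do not cover $V$ --- is not obviously doomed, but it has a real gap. You assert that these fixed sets are ``controlled exactly by $\mathcal{S}_h$'' via Lemma~\ref{GL}. But Lemma~\ref{GL} tells you that the \emph{linear} fixed spaces $\Fix_V(h^{|h|/q'})$ fail to cover $V$; the affine fixed sets of the $g^{|g|/q'}$ are \emph{cosets} of those subspaces, and a union of cosets can be strictly larger than the union of the subspaces themselves (over $\mathbb{F}_2$, two distinct cosets of a hyperplane already cover everything). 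So the passage from ``$\mathcal{S}_h$ nonempty'' to ``the translated fixed sets do not cover'' needs an actual argument, and neither the $\mathcal{S}_h-\mathcal{S}_h$ refinement nor the indecomposable-summand bookkeeping you gesture at obviously supplies one. You also leave the interaction of the translation with a unipotent linear part as an acknowledged but unresolved obstacle. All of this might be repairable with further work, but the $(d+1)$-dimensional embedding sidesteps every one of these difficulties in a single stroke.
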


\begin{proof}
The group $\AGL_d(q)$ is the semidirect product $V\rtimes \GL_d(q)$, where $V$ is a $d$-dimensional vector space of dimension $d$ over the field $\mathbb{F}_q$ with $q$ elements. In particular, every element of $\AGL_d(q)$ is a pair $(h,v)$ with $h\in \GL_d(q)$ and $v\in V$. We recall that the group element $(h,v)$ acts on $w\in V$ via $w^{(h,v)}=w^h+v$.

Now, $\AGL_d(q)$ can be identified with a subgroup of $\GL_{d+1}(q)$. Indeed, the mapping

\[
(h,v)\mapsto\left(
\begin{array}{ccc}
h&0\\
v&1\\
\end{array}
\right)
\]
defines an isomorphism of $\AGL_d(q)$ onto a subgroup $G$ of
$\GL_{d+1}(q)$. Observe that the action of $\AGL_d(q)$ on $V$ is
equivalent to the action of $G$ on the vectors
$(w,1)\in\mathbb{F}_q^{d+1}$ having the last coordinate equal to $1$.

Let $g'$ be the element of $G$ corresponding to an element $g\in \AGL_d(q)$. From
Lemma~\ref{GL}, we see that there exists a vector $(w,\lambda)\in
\mathbb{F}_q^{d+1}$ with $|(w,\lambda)^{\langle g'\rangle}|=|g'|$ and
with $\lambda\neq 0$.  Observing that, for $\mu\in
\mathbb{F}_q\setminus\{0\}$, we have
$(\mu(w,\lambda))^{g'}=\mu(w,\lambda)^{g'}$,  we see that by replacing
the element $(w,\lambda)$ by
$(\lambda^{-1}w,1)$ we may assume that $\lambda=1$. Now
the action of $g'$ on $(w,1)$ is
equivalent to the affine action of $g$ on $w$ and this shows that $w$ lies in  a
$g$-cycle of length $|g'|=|g|$.
\end{proof}

\section{Primitive groups of Diagonal type}\label{sec:diagonal}

We start by recalling the structure of the finite primitive groups 
of Diagonal type. This will also allow us to set up the notation for this
section.

Let $\ell\geq 1$ and let $T$ be a non-abelian simple group. Consider
the group $N=T^{\ell+1}$ and $D=\{(t,\ldots,t)\in N\mid t\in T\}$, a
diagonal subgroup of $N$. Set $\Omega:=N/D$, the set of 
right cosets of $D$ in $N$. Then $|\Omega|=|T|^\ell$. Moreover we
may identify each element $\omega\in \Omega$ with an element
of $T^\ell$ as follows: the right coset
$\omega=D(\alpha_0,\alpha_1,\ldots,\alpha_\ell)$
contains a unique element whose first coordinate is $1$, namely, the
element
$(1,\alpha_0^{-1}\alpha_1,\ldots,\alpha_0^{-1}\alpha_\ell)$. We
choose this
distinguished coset representative and we denote the element 
$D(1,\alpha_1,\ldots,\alpha_\ell)$ of $\Omega$
simply by
$$
[1,\alpha_1,\ldots,\alpha_\ell].
$$
Now the element $\varphi$ of $\Aut(T)$ acts on $\Omega$ by
$$
[1,\alpha_1,\ldots,\alpha_\ell]^\varphi=[1,\alpha_1^\varphi,\ldots,\alpha_\ell^\varphi].
$$
Note that this action is well-defined because $D$ is
$\Aut(T)$-invariant.
Next, the element $(t_0,\ldots,t_\ell)$ of $N$ acts on $\Omega$ by
$$
[1,\alpha_1,\ldots,\alpha_\ell]^{(t_0,\ldots,t_\ell)}=
[t_0,\alpha_1t_1,\ldots,\alpha_\ell
  t_\ell]=[1,t_0^{-1}\alpha_1t_1,\ldots,t_0^{-1}\alpha_\ell t_\ell].
$$
Observe that the action induced by  $(t,\ldots,t)\in N$ on $\Omega$ is
the same as the action induced by the inner automorphism 
corresponding to conjugation by $t$.
Finally, the element $\sigma$ in $\Sym(\{0,\ldots,\ell\})$  acts
on $\Omega$ simply by permuting the coordinates. Note that this action
is well-defined because $D$ is $\Sym(\ell+1)$-invariant.

The set of all permutations we described generates a group $W$ isomorphic
to $T^{\ell+1}\cdot (
\Out(T)\times \Sym(\ell+1))$. A subgroup $G$ of $W$ containing the socle 
$N$ is primitive if either $\ell=2$ or $G$  acts primitively by 
conjugation on the $\ell+1$ simple direct factors of $N$~\cite[Theorem~4.5A]{DM}. Such 
primitive groups are the primitive groups of Diagonal type.
 Write 
$$
M=\{(t_0,t_1,\ldots,t_\ell)\in N\mid t_0=1\}.
$$ 
Clearly, $M$ is a normal subgroup of $N$ acting
regularly on $\Omega$. Since the stabilizer in $W$ of the point
$[1,\ldots,1]$ is $\Sym(\ell+1)\times \Aut(T)$, we obtain
that 
$$
W=(\Sym(\ell+1)\times \Aut(T))M.
$$ 
Moreover,  every element $x\in W$
can be written uniquely as $x=\sigma\varphi m$, with $\sigma\in \Sym(\ell+1)$,
$\varphi\in\Aut(T)$ and $m\in M$.

We first prove a sequence of lemmas about the fixed-point-ratio of
 elements of diagonal groups. We use the notation  above.

\begin{lemma}\label{SDlem1}
Let $x=\varphi m \in W$, with $x\neq 1$, $\varphi\in \Aut(T)$ and $m=(1,t_1,\ldots,t_\ell) \in M$.
% and assume that $|x|=p$ with $p$ a prime number. 
Then $\mathrm{fpr}_\Omega(x)\leq \frac{1}{m(T)^\ell}$ where $m(T)$ is the minimal 
degree of a faithful permutation representation of $T$.
\end{lemma}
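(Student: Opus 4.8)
The plan is to bound the number of fixed points of $x=\varphi m$ on $\Omega$ by counting the elements $[1,\alpha_1,\dots,\alpha_\ell]$ fixed by $x$, and to show this count is at most $|T|^\ell/m(T)^\ell$. First I would write out the fixed-point condition: using the formulas for the action of $\varphi$ and of $m=(1,t_1,\dots,t_\ell)\in M$ given in the section, the point $[1,\alpha_1,\dots,\alpha_\ell]$ is fixed by $x=\varphi m$ precisely when $[1,\alpha_1^\varphi t_1,\dots,\alpha_\ell^\varphi t_\ell]=[1,\alpha_1,\dots,\alpha_\ell]$, i.e. $\alpha_i^\varphi t_i=\alpha_i$ for every $i\in\{1,\dots,\ell\}$. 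Thus the number of fixed points factorises as $\prod_{i=1}^\ell |\{\alpha\in T: \alpha^\varphi t_i=\alpha\}|$, so it suffices to bound each factor $N_i:=|\{\alpha\in T:\alpha^\varphi t_i=\alpha\}|$ by $|T|/m(T)$.

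Next I would interpret each factor $N_i$ group-theoretically. The set $\{\alpha\in T:\alpha^\varphi=\alpha t_i^{-1}\}$ is either empty or a coset of $\cent{T}{\varphi}$ — the set of $\varphi$-fixed points of $T$ — since if $\alpha_0$ is one solution then the general solution is $\alpha=c\alpha_0$ with $c^\varphi=c$. Hence $N_i\in\{0,|\cent{T}{\varphi}|\}$, and in all cases $N_i\le |\cent{T}{\varphi}|$. So the fixed-point count is at most $|\cent{T}{\varphi}|^\ell$, and $\fpr_\Omega(x)\le |\cent{T}{\varphi}|^\ell/|T|^\ell = \bigl(|\cent{T}{\varphi}|/|T|\bigr)^\ell = [T:\cent{T}{\varphi}]^{-\ell}$. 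Here I am using $\varphi\in\Aut(T)$ acting on $T$; when $\varphi$ is an inner automorphism induced by $t\in T$, $\cent{T}{\varphi}$ is the usual centraliser $\cent{T}{t}$, and more generally it is the fixed subgroup in $T$ of the automorphism $\varphi$.

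The crux is then the inequality $[T:\cent{T}{\varphi}]\ge m(T)$ whenever $\varphi\ne 1$ — equivalently, whenever the point $[1,\dots,1]$ is not fixed by $x$, which we may assume since we need a nontrivial bound and $x\ne 1$; note that if $\varphi=1$ then $x=m\in M$ is a nonidentity element of a group acting regularly, so it has no fixed points at all and the bound is trivial. To prove $[T:\cent{T}{\varphi}]\ge m(T)$ I would argue that $T$ acts faithfully on the coset space $T/\cent{T}{\varphi}$: faithfulness holds because the core of $\cent{T}{\varphi}$ in $T$ is a proper normal subgroup of the simple group $T$ (it is proper since $\cent{T}{\varphi}$ is proper — here is exactly where $\varphi\ne 1$ enters, as $\varphi$ acts nontrivially on $T$), hence trivial. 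Since $m(T)$ is the minimal degree of a faithful permutation representation of $T$, this forces $[T:\cent{T}{\varphi}]\ge m(T)$, completing the proof.

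The main obstacle is the step $\varphi\ne 1\Rightarrow \cent{T}{\varphi}<T$: one must be careful that $\varphi$ genuinely acts nontrivially on $T$, and relatedly to handle the bookkeeping of the decomposition $x=\sigma\varphi m$ correctly — in this lemma $\sigma=1$, so $\varphi m\ne 1$ means that either $\varphi\ne1$ (giving the substantive bound via the centraliser index) or $\varphi=1$ and $m\ne1$ (giving no fixed points at all by regularity of $M$). Everything else is the routine coset computation sketched above, together with the standard fact that $m(T)$ bounds below the index of any core-free subgroup.
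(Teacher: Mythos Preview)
Your proof is correct and follows essentially the same approach as the paper: derive the fixed-point equations $\alpha_i^\varphi t_i=\alpha_i$, show each has at most $|\cent{T}{\varphi}|$ solutions (you phrase this via cosets of the fixed subgroup, the paper via $T$-conjugacy of $\varphi^{-1}$ and $t_i^{-1}\varphi^{-1}$ in $\Aut(T)$, which amounts to the same thing), handle $\varphi=1$ separately by regularity of $M$, and conclude using $[T:\cent{T}{\varphi}]\ge m(T)$. Your justification of this last inequality via faithfulness of the coset action is a welcome addition, since the paper leaves it implicit.
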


\begin{proof}
 Let $\omega=[1,\alpha_1,\ldots,\alpha_\ell]\in \Fix_\Omega(x)$.  Then
\begin{eqnarray*}
[1,\alpha_1,\ldots,\alpha_\ell]&=&
[1,\alpha_1,\ldots,\alpha_\ell]^x=
[1,\alpha_1^\varphi t_1,\ldots,\alpha_\ell^\varphi t_\ell ].
\end{eqnarray*}
This gives $\alpha_i^{\varphi}t_i=\alpha_i$, for each $i\in
\{1,\ldots,\ell\}$. In
particular, computing in $\Aut(T)$, we have, for each $i$, 
$$
\alpha_i^{-1}\varphi^{-1}\alpha_i=t_i^{-1}\varphi^{-1}.
$$
This implies that $\varphi^{-1}$ and  $t_i^{-1}\varphi$ are elements of
$\Aut(T)$ conjugate via the  element $\alpha_i$ of $T$. Now the number
of $\alpha_{i}\in T$ conjugating $\varphi^{-1}$ to
$t_{i}^{-1}\varphi^{-1}$ is either  
$|\cent T{\varphi^{-1}}|$ or $0$, according to whether $\varphi^{-1}$ and
$t_i^{-1}\varphi^{-1}$ are, or are not, in the same $T$-conjugacy class. Observe
that,  since we
are trying to obtain an upper bound on $\fpr_\Omega(x)$,
we may assume that $\varphi\neq 1$ because otherwise $x$ is the
translation by $m\in M$, which has no fixed points. This shows
that
\begin{eqnarray}\label{eq:SD1}
\mathrm{fpr}_\Omega(x)&=&
\frac{
|\Fix_\Omega(x)|
}{
|\Omega|
}\leq \frac{|\cent T
  {\varphi^{-1}}|^\ell}{|T|^{\ell}}=\frac{1}{(|T:
\cent T{\varphi^{-1}}|)^\ell}\leq \frac{1}{m(T)^\ell}.
\end{eqnarray}
\end{proof}

It is very important to observe (from the proof) that $\fpr_\Omega(x)>0$ 
in Lemma \ref{SDlem1} only if $|\varphi|$, and hence $|x|$, is divisible by a 
prime divisor of $|\Aut(T)|$.

\begin{lemma}\label{SDlem2}
Let $x=\sigma\varphi m\in W$, with $\sigma\in \Sym(\ell+1)\backslash\{1\}$ such that $0^\sigma=0$,
$\varphi\in \Aut(T)$ and $m=(1,t_1,\ldots,t_\ell) \in M$, and assume
that $p:=|x|$ is prime. Then $\mathrm{\fpr}_\Omega(x)\leq \frac{1}{|T|^{p-1}}$.
\end{lemma}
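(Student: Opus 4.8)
The plan is to imitate the fixed-point computation of Lemma \ref{SDlem1}, but now exploiting that the permutation $\sigma\in\Sym(\ell+1)$ genuinely moves coordinates. Since $p=|x|$ is prime and $\sigma\ne 1$, $\sigma$ is a product of $p$-cycles on $\{1,\ldots,\ell\}$ (note $0$ is fixed), so $\ell\ge p$; in fact $x^p=1$ forces every nontrivial $\sigma$-orbit to have length exactly $p$. First I would fix a point $\omega=[1,\alpha_1,\ldots,\alpha_\ell]\in\Fix_\Omega(x)$ and write out the condition $\omega^x=\omega$ coordinatewise, exactly as in Lemma \ref{SDlem1}: the action of $x=\sigma\varphi m$ sends the $i$-th coordinate $\alpha_i$ to $\alpha_{i\sigma^{-1}}^{\varphi}t_i$ (for $i\ge 1$), so $\Fix_\Omega(x)$ is cut out by the system $\alpha_i=\alpha_{i\sigma^{-1}}^{\varphi}t_i$ for $i=1,\ldots,\ell$.

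The key step is that within each nontrivial $\sigma$-orbit $\{i_1,i_2,\ldots,i_p\}$ (with $i_{j+1}=i_j\sigma^{-1}$), the relations let one solve for $\alpha_{i_2},\ldots,\alpha_{i_p}$ in terms of $\alpha_{i_1}$, leaving a single consistency condition on $\alpha_{i_1}$ obtained by composing around the cycle — of the form $\alpha_{i_1}=\alpha_{i_1}^{\varphi^{p}}\cdot(\text{word in the }t_i\text{ and }\varphi)$. Thus the number of fixed points contributed by each such orbit of size $p$ is at most $|T|$ (at most the number of solutions $\alpha_{i_1}\in T$ of one equation, trivially bounded by $|T|$), while contributing $|T|^{p}$ to $|\Omega|$ if the orbit lay entirely in $\{1,\ldots,\ell\}$. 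More carefully: if $\sigma$ has $c$ orbits of length $p$ and fixes $\ell-cp$ of the coordinates $1,\ldots,\ell$, then $|\Fix_\Omega(x)|\le |T|^{c}\cdot |T|^{\ell-cp}$ (the free coordinates outside the support contribute at most $|T|$ each, and may be further bounded using $\varphi$ as in Lemma \ref{SDlem1}, but the crude bound already suffices), whereas $|\Omega|=|T|^{\ell}$. Hence
\[
\fpr_\Omega(x)\le \frac{|T|^{\ell-c(p-1)}}{|T|^{\ell}}=\frac{1}{|T|^{c(p-1)}}\le\frac{1}{|T|^{p-1}},
\]
using $c\ge 1$.

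The main obstacle I anticipate is purely bookkeeping: correctly tracking how the semidirect-product twist by $\varphi$ and the translations $t_i$ interact with the coordinate permutation $\sigma$ when writing the fixed-point equations, and verifying that each $\sigma$-orbit really does collapse $p$ free parameters down to (at most) one. One must also be slightly careful that $\sigma$ fixes the $0$-th coordinate so that the chosen coset representatives $[1,\alpha_1,\ldots,\alpha_\ell]$ are respected by the action (this is exactly why the hypothesis $0^\sigma=0$ is imposed). Once the orbit-by-orbit count is set up, the inequality is immediate, and, as in the remark after Lemma \ref{SDlem1}, we may further note that $\fpr_\Omega(x)>0$ only imposes extra arithmetic constraints, though this is not needed for the stated bound.
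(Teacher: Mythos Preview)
Your proposal is correct and follows essentially the same approach as the paper: write out the fixed-point equations $\alpha_i=\alpha_{i\sigma^{-1}}^{\varphi}t_i$ and observe that within a $p$-cycle of $\sigma$ all but one coordinate are determined by a single one. The only difference is that the paper uses just one $p$-cycle of $\sigma$ (yielding $|\Fix_\Omega(x)|\le |T|\cdot|T|^{\ell-p}$ directly), whereas you account for all $c$ cycles and obtain the sharper intermediate bound $\fpr_\Omega(x)\le 1/|T|^{c(p-1)}$ before specializing to $c\ge 1$; this is a minor refinement and the underlying argument is identical.
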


\begin{proof}
Let $\omega=[1,\alpha_1,\ldots,\alpha_\ell]\in \Fix_\Omega(x)$.  
By assumption, $\sigma\ne1$ and hence $|\sigma|=p$. 
Thus, relabelling the index set $\{1,\ldots,\ell\}$ if
necessary, we may write $\sigma=(1,\ldots,p)\cdots
((k-1)p+1,\ldots,kp)$, for some $k\geq 1$.
Then
\begin{eqnarray*}
[1,\alpha_1,\alpha_2,\ldots,\alpha_{p-1},\alpha_{p},\ldots]&=&
[1,\alpha_1,\alpha_2,\ldots,\alpha_{p-1},\alpha_p,\ldots]^x\\
&=&[1,\alpha_p^\varphi t_1,\alpha_1^\varphi t_2,\ldots,\alpha_{p-2}^\varphi
  t_{p-1},\alpha_{p-1}^\varphi t_p,\ldots].
\end{eqnarray*}
By considering the $2^{\textrm{nd}}$ coordinate, we get
$$\alpha_{1}=\alpha_p^\varphi t_{1}.$$ Now by considering the
$3^{\textrm{rd}}$ coordinate, we
obtain $$\alpha_{2}=\alpha_{1}^\varphi
t_{2}=\alpha_p^{\varphi^2}t_{1}^\varphi t_{2}.$$
Proceeding inductively we see that, for $i\in \{1,\ldots,p-1\}$, we have

$$\alpha_i=\alpha_p^{\varphi^{i}}t_1^{\varphi^{i-1}}\cdots
t_{i-1}^\varphi t_i.$$
This yields that the $p-1$ entries $\alpha_1,\ldots,\alpha_{p-1}$
of $\omega$ are uniquely determined by $x$ and by 
$\alpha_p$. Thus
\begin{eqnarray}\label{eq:SD2}
\mathrm{\fpr}_\Omega(x)&=&\frac{|\Fix_\Omega(x)|}{|\Omega|}\leq \frac{|T|^{\ell-p}|T|}{|T|^{\ell}}=\frac{1}{|T|^{p-1}}.
\end{eqnarray}
\end{proof}

\begin{lemma}\label{SDlem3}
Let $x=\sigma\varphi m\in W$, with $\sigma\in \Sym(\ell+1)$ such that $0^\sigma\neq 0$,
$\varphi\in \Aut(T)$ and $m=(1,t_1,\ldots,t_\ell) \in M$, and assume
that $p:=|x|$ is prime. 
\begin{enumerate}
 \item If $p>2$ then $\mathrm{\fpr}_\Omega(x)\leq \frac{1}{|T|^{p-2}}$.
 \item If $p=2$ then $\mathrm{\fpr}_\Omega(x)\leq \frac{4}{15}$.
\end{enumerate}
\end{lemma}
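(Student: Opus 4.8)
The plan is to proceed exactly as in Lemma~\ref{SDlem2}, tracking how the fixed points of $x$ constrain the coordinates $\alpha_1,\dots,\alpha_\ell$, but now taking account of the fact that $\sigma$ moves the $0$-th coordinate. First I would normalise: since $p=|x|$ is prime and $0^\sigma\neq 0$, the permutation $\sigma$ has order $p$, so the cycle of $\sigma$ containing $0$ has length $p$; after relabelling $\{0,1,\dots,\ell\}$ we may write $\sigma=(0,1,\dots,p-1)\cdot(\text{other }p\text{-cycles})$, and the ``other'' cycles contribute exactly as in Lemma~\ref{SDlem2} (each such $p$-cycle lets $p-1$ of its coordinates be determined by the remaining one). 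So the whole estimate reduces to analysing the single cycle through $0$.

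For that cycle, let $\omega=[1,\alpha_1,\dots,\alpha_\ell]\in\Fix_\Omega(x)$ and write out $\omega^x=\omega$ restricted to coordinates $0,1,\dots,p-1$. The key point is that the $0$-th coordinate of the coset representative is pinned to $1$, so after applying $x$ one must re-normalise (multiply through by the inverse of the new $0$-th entry), and this re-normalisation couples all of $\alpha_1,\dots,\alpha_{p-1}$ together. Chasing the coordinates as in Lemma~\ref{SDlem2}, I expect to find that $\alpha_1,\dots,\alpha_{p-1}$ are \emph{all} determined once the element $x$ is fixed (there is no free parameter analogous to $\alpha_p$, because the would-be free coordinate is the $0$-th one, which is already $1$); more precisely the condition ``$\omega$ is fixed'' forces $\alpha_1,\dots,\alpha_{p-1}$ to lie in a coset of $\cent{T}{\psi}$ for a suitable $\psi\in\Aut(T)$ built from $\varphi$ and the $t_i$, so there are at most $|\cent{T}{\psi}|$ choices for the block $(\alpha_1,\dots,\alpha_{p-1})$ rather than $|T|^{p-2}$ choices. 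Combined with the other cycles of $\sigma$ (each costing a factor $|T|^{-(p-1)}$ relative to the trivial bound $|T|^\ell$) this yields $\fpr_\Omega(x)\le |\cent{T}{\psi}|\,|T|^{-(p-1)}$ for the block through $0$ times the rest, giving the bound $\fpr_\Omega(x)\le |T|^{-(p-2)}$ in part~(1) once one uses the crude estimate $|\cent{T}{\psi}|\le |T|$. For part~(2), $p=2$: here the single coupled coordinate $\alpha_1$ is forced to satisfy $\alpha_1=\alpha_1^{-\psi}\cdot c$ for a fixed $c$, i.e. $\alpha_1$ ranges over a twisted conjugacy-class-type set; the number of solutions is at most the largest such set, and the factor $4/15$ is exactly the maximum, over non-abelian simple $T$ and involutory twisted automorphisms $\psi$, of (number of solutions)$/|T|$, which is attained at $T=\Alt(5)$. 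So in the $p=2$ case I would invoke the classification-free bound that this ratio is $\le 4/15$ — most cleanly by reducing ``number of $\alpha\in T$ with $\alpha\cdot\alpha^\psi = c$'' to a count of involutions-or-identity in the coset $T\psi$ inside $\langle T,\psi\rangle$, then citing the known maximal proportion of involutions (together with the identity) in an almost simple group, which is $4/15$ for $\Sym(5)\cong\mathrm{S}_5$ acting on $\Alt(5)$, equivalently for the $\langle T,\psi\rangle$ with $T=\Alt(5)$.

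The main obstacle I anticipate is the $p=2$ bound of $4/15$: unlike part~(1), a crude $|\cent{T}{\psi}|\le|T|$ estimate is useless (it would only give $\fpr\le 1/2$, or even nothing), so one genuinely needs the sharp arithmetic fact about the proportion of ``square-root-of-$c$'' elements, equivalently of involutions, in a coset $T\psi$. The cleanest route is to reduce the count $|\{\alpha\in T:\alpha\alpha^\psi=c\}|$ to $|\{y\in T\psi : y^2 = \text{(fixed element)}\}|$ via the substitution $y=\alpha\psi$ (so $y^2=\alpha\alpha^\psi\psi^2$), and then bound the number of solutions of $y^2=z$ in the almost simple group $\langle T,\psi\rangle$ by the number of elements of order dividing $2$ in that group; the proportion of such elements is maximised by $\Sym(5)$, giving $(1+|{\rm involutions}|)/|{\rm S}_5\text{-ish set}|$ evaluating to $4/15$. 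I would set this reduction up carefully, handle the degenerate sub-case where the relevant equation has no solution (then $\fpr=0$), and otherwise quote the involution-proportion bound. If a self-contained argument is wanted in place of a citation, one can note that in a non-abelian simple group of order $n$ the number of involutions is at most $n/3$ except for small exceptions handled directly, and $\Alt(5)$ (with the outer involution) gives the extremal $4/15$.
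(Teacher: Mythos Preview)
Your plan for part~(1) is essentially the paper's argument: after renormalising the $0$-th coordinate, the equations determine $\alpha_1,\dots,\alpha_{p-2}$ in terms of $\alpha_{p-1}$, and treating $\alpha_{p-1}$ as free gives the factor $|T|$ and hence $\fpr_\Omega(x)\le |T|^{-(p-2)}$. (Your refinement via $\cent{T}{\psi}$ is unnecessary, since you immediately discard it with $|\cent{T}{\psi}|\le|T|$; the paper simply stops at ``$\alpha_{p-1}$ free''.)

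For part~(2) there is a genuine gap. Your key step is: ``bound the number of solutions of $y^2=z$ in $\langle T,\psi\rangle$ by the number of elements of order dividing $2$''. This inequality is \emph{false} in general --- in $Q_8$ the element $-1$ has six square roots but there are only two elements of order $\le 2$ --- so as written the argument does not go through. The paper closes this gap in two moves. First, it uses the hypothesis $|x|=2$ (applied to the map $\alpha\mapsto(\alpha^{-1})^\varphi t_1$ being an involution) to force $t_1^\varphi=t_1$ and $\varphi^2=$ conjugation by $t_1^{-1}$; this makes the fixed element $z$ equal to $1$, so one really is counting solutions of $y^2=1$ in the coset $T\varphi^{-1}$. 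Second, rather than quoting an involution-proportion bound, the paper converts the count to ``elements of $T$ inverted by a fixed automorphism'' via the dihedral observation that if $y,y'$ are involutions in the same $T$-coset then $y$ inverts $y'y^{-1}\in T$; it then cites Potter's theorem that no automorphism of a non-abelian simple $T$ inverts more than $4|T|/15$ elements (sharp at $T=\Alt(5)$). Note that the inverted-elements argument in fact works even without first reducing to $z=1$: if $y_0^2=z$ then $y=ty_0$ satisfies $y^2=z$ iff $t^{y_0^{-1}}=t^{-1}$, so the number of solutions is exactly the number of $t\in T$ inverted by $y_0$. Either route repairs your argument, but the bound you stated does not.
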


\begin{proof}
Let $\omega=[1,\alpha_1,\ldots,\alpha_\ell]\in\Fix_\Omega(x)$.  
By assumption, $\sigma\ne1$ and hence $|\sigma|=p$. 
Relabelling the index set $\{1,\ldots,\ell\}$ if
necessary, we may write $\sigma=(0,1,\ldots,p-1)\cdots
((k-1)p,\ldots,kp-1)$, for some $k\geq 1$. Here the argument is a
little more delicate and in fact the case $p=2$ requires extra
care. We have
\begin{eqnarray}\label{long}\nonumber
\omega&=&
[1,\alpha_1,\alpha_2,\ldots,\alpha_{p-2},\alpha_{p-1},\ldots]^x=[\alpha_{p-1},1,\alpha_1,\alpha_2,\ldots,\alpha_{p-2},\cdots]^{\varphi m}\\\nonumber
&=&[1,\alpha_{p-1}^{-1},\alpha_{p-1}^{-1}\alpha_1,\ldots,\alpha_{p-1}^{-1}\alpha_{p-3},\alpha_{p-1}^{-1}\alpha_{p-2},\ldots]^{\varphi 
m}\\
&=&
[1,(\alpha_{p-1}^{-1})^\varphi t_1,(\alpha_{p-1}^{-1}\alpha_1)^\varphi
  t_2,\ldots,(\alpha_{p-1}^{-1}\alpha_{p-3})^\varphi
  t_{p-2},(\alpha_{p-1}^{-1}\alpha_{p-2})^\varphi t_{p-1},\ldots].
\end{eqnarray}
Suppose that $p>2$. By considering the $2^{\textrm{nd}}$ coordinate, we get
$$
\alpha_{1}=(\alpha_{p-1}^{-1})^\varphi t_{1},
$$ 
and from the $3^{\textrm{rd}}$ coordinate, we
obtain 
$$
\alpha_{2}=(\alpha_{p-1}^{-1}\alpha_1)^\varphi
t_{2}=(\alpha_{p-1}^{-1})^{\varphi}(\alpha_{p-1}^{-1})^{\varphi^2}t_{1}^\varphi t_{2}.
$$
Proceeding inductively we see that, for $i\in \{1,\ldots,p-2\}$,
the $p-2$ entries $\alpha_1,\ldots,\alpha_{p-2}$ 
of $\omega$ are uniquely determined by $x$ and by 
$\alpha_{p-1}$. Thus 
\begin{eqnarray}\label{eq:SD3}
\mathrm{fpr}_\Omega(x)&=&\frac{|\Fix_\Omega(x)|}{|\Omega|}\leq \frac{|T|^{\ell-(p-1)}|T|}{|T|^{\ell}}=\frac{1}{|T|^{p-2}}.
\end{eqnarray}

We now consider the case $p=2$, where~\eqref{eq:SD3} (although still
correct) becomes meaningless. Observe that the action of $x$ on the $2^{\textrm{nd}}$
coordinate is given by $\alpha\mapsto (\alpha^{-1})^\varphi t_1$. 
This map is an involution only if
\begin{equation}\label{eq:SD4}
\alpha=(t_1^{-1}\alpha^\varphi)^\varphi t_1,
\end{equation}
for each $\alpha\in T$. By choosing $\alpha=1$, we get $t_1^\varphi=t_1$
and~\eqref{eq:SD4} becomes
$$
\alpha=t_1^{-1}\alpha^{\varphi^2}t_1=\alpha^{\varphi^2 t_1},
$$
for every $\alpha\in T$. Therefore $\varphi^2$ acts as conjugation by
$t_1^{-1}$.
 We are now ready to bound the number of fixed points of $x$. Consider
the $2^{\textrm{nd}}$ coordinate of $\omega$ in~\eqref{long}. For
$\omega=\omega^x$, we need to have
$\alpha_1=(\alpha_1^{-1})^{\varphi }t_1$. We now perform some computations
in $\Aut(T)$. Recalling that $\varphi^2$ squares to $t_1^{-1}$ in
$\Aut(T)$, we get
\begin{equation}\label{inv}
1=\alpha_1^{-1}\varphi^{-1}\alpha_1^{-1}\varphi
t_1=\alpha_1^{-1}\varphi^{-1}\alpha_1^{-1}\varphi^{-1}\varphi^2 t_1=(\alpha_1^{-1}\varphi^{-1})^2t_1^{-1}t_1=(\alpha_1^{-1}\varphi^{-1})^2.
\end{equation}
Observe, that if $\alpha_1$ and $\alpha_1'$ are two distinct solutions
of~\eqref{inv}, then
$(\alpha_1^{-1}\varphi^{-1})^2=(\alpha_1'^{-1}\varphi^{-1})^2=1$ and
so $\langle
\alpha_1^{-1}\varphi^{-1},\alpha_1'^{-1}\varphi^{-1}\rangle$
is a dihedral group. Therefore the involution
$\alpha_1^{-1}\varphi^{-1}$ must invert
$$
(\alpha_1^{-1}\varphi^{-1})(\alpha_1'^{-1}\varphi^{-1})^{-1}=\alpha_1^{-1}\alpha_1'.
$$ 
This shows that, for a given solution $\alpha$ of~\eqref{inv},  all 
other solutions are of the form $\alpha'=\alpha e$, where $e\in T$ is inverted by
$\alpha^{-1}\varphi^{-1}$.

 From~\cite{Potter}, we see that an automorphism of a non-abelian
simple group $T$ cannot invert more than $4/15$ of the
elements of $T$, (and equality holds only for $T=\Alt(5)$). 
Therefore, we have at most $4|T|/15$ solutions to~\eqref{inv}.
In summary, we have

\begin{eqnarray}\label{eq:SD5}
\mathrm{fpr}_\Omega(x)&=&\frac{|\Fix_\Omega(x)|}{|\Omega|}\leq
\frac{|T|^{\ell-1}(4|T|/15)}{|T|^{\ell}}=\frac{4}{15}.
\end{eqnarray}
\end{proof}

Before proving our main result we need a rather technical number theoretic lemma. Recall that
in elementary number theory $\omega(n)$ denotes the number of distinct
prime divisors of the positive integer $n$. The following lemmas may be found 
in~\cite[Theorem~$13$]{Robin} and~\cite{Mass} respectively.

\begin{lemma}\label{numbertheory}
For $n\geq 26$, we have $\omega(n)\leq \log(n)/(\log(\log (n))-1.1714)$.
\end{lemma}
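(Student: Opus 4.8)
The plan is to cite the literature, since this is a standard explicit bound on $\omega(n)$ that one should not reprove from scratch. The statement is attributed in the excerpt to~\cite[Theorem~13]{Robin}, so the entire content of the proof is simply the observation that this is the relevant inequality from Robin's paper. Concretely, I would write: \emph{This is~\cite[Theorem~13]{Robin}.} and stop there, exactly as the surrounding text already anticipates (``The following lemmas may be found in~\cite[Theorem~13]{Robin} and~\cite{Mass} respectively.'').

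If one instead wanted a self-contained argument, the natural route is the classical one: order the primes $p_1 < p_2 < \cdots$, and note that if $\omega(n) = k$ then $n \geq p_1 p_2 \cdots p_k = N_k$, the $k$-th primorial. By Mertens' theorems (or by the prime number theorem with explicit error terms, which is where Robin's careful constants come from), $\log N_k = \sum_{i \le k} \log p_i = \theta(p_k) \sim p_k$, and $p_k \sim k \log k$, so $\log N_k \sim k \log k$. Inverting this asymptotic relation and being careful with the error terms yields $k \le \log(n)/(\log\log(n) - c)$ for a suitable constant $c$, and Robin's work pins down that one may take $c = 1.1714$ once $n \ge 26$. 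The threshold $n \ge 26$ is exactly what is needed to absorb the small-$n$ irregularities (for instance $n = 2\cdot 3 \cdot 5 = 30$ is the first place three prime factors appear, and one checks the inequality directly against a finite list below the point where the asymptotic estimate takes over).

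The main obstacle, in the self-contained approach, would be obtaining the explicit constant $1.1714$ with the explicit threshold $26$: this requires effective versions of Chebyshev/Mertens estimates (e.g.\ Rosser--Schoenfeld type bounds on $\theta(x)$ and on $p_k$) together with a finite verification, which is precisely the technical computation carried out in~\cite{Robin}. Since the paper only needs the inequality as an input and attributes it cleanly, the sensible thing is to invoke it rather than reproduce Robin's analysis; I expect the author's ``proof'' here to be a one-line citation.
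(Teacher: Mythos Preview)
Your proposal is correct and matches the paper exactly: the paper gives no proof at all, simply attributing the inequality to \cite[Theorem~13]{Robin} in the sentence preceding the lemma. Your anticipated one-line citation is precisely what the authors do.
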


\begin{lemma}\label{mass}
Let $m\geq 3$  and let $a$ be the maximum order of an element of
$\Sym(m)$.  Then $$\log(a)\leq \sqrt{m\log m}\left(1+\frac{\log(\log(m))-0.975}{2\log(m)}\right).$$
\end{lemma}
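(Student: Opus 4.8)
The statement is the explicit upper bound for \emph{Landau's function} $a=a(m)$, the maximal order of an element of $\Sym(m)$, and the cleanest route is the classical analysis of this function (this is essentially Massias' argument). First I would pin down the shape of an extremal element. If $g\in\Sym(m)$ has order $a$ and cycle lengths $c_1,\dots,c_t$, so that $\sum_j c_j\le m$ and $a=\lcm\{c_1,\dots,c_t\}$, then two elementary reductions apply: (i) if a prime $p$ divides two distinct $c_i$, deleting the smaller $p$-part decreases $\sum_j c_j$ while leaving the lcm unchanged; and (ii) replacing a $c_j$ that is not a prime power by its prime-power divisors, taken as separate cycles, leaves the lcm unchanged and does not increase $\sum_j c_j$ (since $p^a+q^b\le p^a q^b$ when $p^a,q^b\ge2$), after which (i) may have to be reapplied. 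The process terminates, so we may assume $c_1,\dots,c_t$ are powers of distinct primes, each equal to the exact prime-part of $a$ at that prime; hence $\log a=\sum_j\log c_j$ with $\sum_j c_j\le m$.

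Next I would bound $\log a$ by Chebyshev-type sums. Maximising $\sum_j\log c_j$ subject to $\sum_j c_j\le m$, where the $c_j$ range over prime powers with pairwise distinct prime bases, one compares the efficiencies $(\log q)/q$ of prime powers $q$; apart from the coincidence $\tfrac{\log 2}{2}=\tfrac{\log 4}{4}$ these decrease as $q$ grows, so an optimal multiset is, up to a bounded number of local adjustments (using $4$ in place of $2$, and replacing the largest few primes by their squares to absorb leftover budget), the set of all primes $p\le x$ for a suitable threshold $x$. Consequently $\log a\le\theta(x)+E_1$ and $m\ge\sum_{p\le x}p-E_2$, where $\theta(x)=\sum_{p\le x}\log p$ and the corrections $E_1,E_2$ are $O(\sqrt{x})$, coming from the genuine-prime-power contributions (bounded by $\psi(x)-\theta(x)$).

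Finally I would invert this relation using explicit prime-number estimates of Rosser--Schoenfeld type: $\sum_{p\le x}p\ge\tfrac{x^2}{2\log x}\bigl(1-o(1)\bigr)$ and $\theta(x)\le x\bigl(1+o(1)\bigr)$, with fully explicit constants. Since $\sum_{p\le x}p$ is increasing, $m\gtrsim\tfrac{x^2}{2\log x}$ gives an upper bound $x\le\sqrt{m\log m}\bigl(1+\tfrac{\log\log m-c}{2\log m}+\cdots\bigr)$, and then $\log a\le\theta(x)+E_1\le x+O(\sqrt{x})$ yields the stated inequality. I expect the only real obstacle to be quantitative rather than structural: one must calibrate every explicit error term precisely enough that the surviving constant is exactly $0.975$ and not slightly worse, and dispose of the finitely many small $m$ (below the range where the asymptotic estimates are sharp) by direct computation. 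Since all of this is carried out in full in~\cite{Mass}, in the paper I would simply cite it.
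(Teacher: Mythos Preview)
Your proposal is correct and matches the paper's approach exactly: the paper does not prove this lemma at all but simply quotes it from~\cite{Mass}, which is precisely what you conclude you would do. Your sketch of the underlying Massias argument is accurate and more informative than what the paper provides, but as a proof in this context it is unnecessary---the citation alone suffices.
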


We can now prove the following theorem.
\begin{theorem}\label{thrm:SD}
Let $G$ be a primitive group of Diagonal type. Then each element of $G$
has a regular cycle.
\end{theorem}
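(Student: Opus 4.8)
The plan is to apply the Basic Lemmas to reduce to elements of prime order and then use the fixed-point-ratio estimates of Lemmas~\ref{SDlem1}--\ref{SDlem3} together with Lemma~\ref{lemma:apeman}. By Lemma~\ref{basic} it suffices to show that every $g\in G$ of square-free order has a regular cycle; and by Lemma~\ref{lemma:apeman} it is enough to prove that
\[
\sum_{p\divides |g|,\ p\ \mathrm{prime}} \fpr_\Omega\bigl(g^{|g|/p}\bigr) < 1 .
\]
Each element $h=g^{|g|/p}$ appearing in this sum has order $p$ (or $1$, in which case it contributes nothing once $g\neq 1$; the case $g=1$ is trivial), and can be written uniquely as $h=\sigma\varphi m$ in the notation set up for Diagonal type. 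Depending on whether $\sigma=1$, or $\sigma\neq 1$ with $0^\sigma=0$, or $0^\sigma\neq 0$, Lemmas~\ref{SDlem1}, \ref{SDlem2}, \ref{SDlem3} respectively give $\fpr_\Omega(h)\leq 1/m(T)^\ell$, $\leq 1/|T|^{p-1}$, or $\leq 1/|T|^{p-2}$ (with the exceptional bound $4/15$ when $p=2$ in the last case). Since $m(T)\geq 5$ and $|T|\geq 60$, each individual term is small, and the task is to control the \emph{number} of primes $p$ dividing $|g|$.

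The key structural point, highlighted in the remark after Lemma~\ref{SDlem1}, is that $\fpr_\Omega(h)>0$ in the $\sigma=1$ case only when $p$ divides $|\Aut(T)|$, and more generally the primes occurring with positive fixed-point-ratio are constrained: either $p$ divides $|\Aut(T)|$, or $p=|\sigma|\leq \ell+1$ so $p\leq \ell+1$ and $\sigma$ moves points. So I would split the primes dividing $|g|$ into those coming from the $\Aut(T)$-part and those equal to the order of the relevant cycle-part in $\Sym(\ell+1)$. For the first kind there are at most $\omega(|\Aut(T)|)$ of them, and each contributes at most $1/m(T)^\ell\leq 1/5^\ell$ (or at most $1/|T|^{p-2}$ when combined with a permutation part). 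For the second kind, since the primes $p$ must be distinct and each is at most $\ell+1$ and the corresponding $h$ has order $p$, there are few such primes and each contributes at most $1/|T|^{p-2}\leq 1/|T|^{0}=1$ only when $p=2$; for $p\geq 3$ the bound $1/|T|^{p-2}\leq 1/60$ is tiny, and for $p=2$ the single term $4/15$ must be balanced against everything else. This is where the number-theoretic Lemmas~\ref{numbertheory} and \ref{mass} come in: they bound $\omega(n)$ and the maximal element order in $\Sym(\ell+1)$, so that $\sum_p \fpr_\Omega(g^{|g|/p})$ is bounded by something like $\omega(|\Aut(T)|)/5^\ell + (\text{small})$, and one checks this is $<1$ using that $|\Aut(T)|$ is polynomially bounded in $|T|$ while $|T|^\ell$ dominates, and that $\omega(\ell+1)$ grows far more slowly than $5^\ell$ or $|T|$.

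The main obstacle I anticipate is the interplay between a large permutation part $\sigma$ (which forces $\ell$ to be large, helping the $1/m(T)^\ell$ bounds, but allows many small primes dividing $|g|$ via $|\sigma|$) and the $p=2$ case where the only available bound is $4/15$. One must argue that at most one prime, namely $2$, can contribute a term as large as $4/15$, and that all other contributions together stay below $11/15$. When $\ell$ is small (say $\ell=1$ or $2$) the bounds $1/m(T)^\ell$ and $1/|T|^{p-2}$ are weakest, so these low-dimensional cases likely need separate, more careful treatment — possibly a direct argument that an element of prime order $p$ has positive fixed-point-ratio for at most a bounded number of primes, combined with the observation that when $\ell\leq 2$ the permutation part $\sigma$ has order at most $3$, sharply limiting which primes occur. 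Assembling these estimates and verifying the final inequality $\sum_p \fpr_\Omega(g^{|g|/p})<1$ in all regimes is the crux; once that is done, Lemma~\ref{lemma:apeman} finishes the proof, and hence by Lemma~\ref{basic} every element of $G$ has a regular cycle.
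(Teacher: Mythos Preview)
Your plan is precisely the paper's approach: reduce to square-free order, split the primes dividing $|g|$ according to whether $g^{|g|/p}$ has trivial or nontrivial $\Sym(\ell+1)$-part, apply Lemmas~\ref{SDlem1}--\ref{SDlem3} to obtain $\sum_p \fpr_\Omega(g^{|g|/p}) \leq s/m(T)^\ell + 4/15 + 1/59$ with $s\leq\omega(|\Aut(T)|)$, and finish via Lemma~\ref{lemma:apeman}. The ``crux'' you identify is handled in the paper by a case-by-case check using the tables of $m(T)$ together with Lemma~\ref{numbertheory}, which succeeds except when $\ell=1$ and $T\cong\Alt(m)$ or $\PSL_2(q)$ ($q\leq 11$); for $T=\Alt(m)$ the bound on $s$ is then sharpened to $s\leq 2\log(a)$ via Lemma~\ref{mass} (this, rather than anything about $|\Aut(T)|$ being polynomial in $|T|$, is what makes the $\ell=1$ alternating case go through), and the few remaining small groups are checked directly.
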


\begin{proof}
We use the notation introduced above, and we
assume without loss of generality that $G=W$ with socle $T^{\ell+1}$. Let $g\in W$.
By Lemma~\ref{basic}, we may assume that $g$ has square-free 
order $p_1\cdots p_r$ where the $p_i$ are pairwise distinct primes and $r\geq1$. 
If $g\in M\rtimes \Aut(T)$, then we set $h=g, k=1,$ and $s=r$.
If this is not the case then we may write $g=hk=kh$ where 
$\langle h\rangle :=  \langle g\rangle \cap (M\rtimes \Aut(T))$
and $k\ne1$. Relabelling
the index set $\{1,\ldots,r\}$ if necessary, we may assume that
$|k|= p_{s+1}\cdots p_r$ (the order of $g$ modulo $M\rtimes \Aut(T)$),
for some $s$ in the interval $[0,r)$, and $|h| = p_1\cdots p_s$
(or $h=1$ if $s=0$). Write
$x_i=g^{|g|/{p_i}}$ for $i\in \{1,\ldots,r\}$.

Observe that  if $i>s$ then $x_i$  is of the type given in 
Lemma \ref{SDlem2} or Lemma \ref{SDlem3}  (and
so $\fpr_\Omega(x_i)\leq 4/15$ if $p_i=2$ and $\fpr_\Omega(x_i)\leq
1/|T|^{p_i-2}$ if $p_i>2$), while if $i\leq s$ then $x_i$ is of the 
type given in Lemma \ref{SDlem1}  (and so $\fpr_\Omega(x_i)\leq 1/m(T)^\ell$).
Thus
\begin{eqnarray}\label{keyremark}
\sum_{i=1}^r\mathrm{fpr}_\Omega(x_i)&\leq&
\frac{s}{m(T)^\ell} + \frac{4}{15} + \sum_{\substack{s+1\leq i\leq r\\ p_i>2}}\frac{1}{|T|^{p_i-2}}.  
\end{eqnarray}
Clearly
$$
\sum_{u\geq 1}\frac{1}{|T|^u}=\frac{1}{|T|-1}\leq \frac{1}{59}
$$
and hence, from~\eqref{keyremark}, 
\begin{eqnarray}\label{stillgood}
\sum_{i=1}^r\mathrm{fpr}_\Omega(x_i)&\leq&\frac{s}{m(T)^\ell}+\frac{4}{15}+\frac{1}{59}.
\end{eqnarray}

Recall that $s$ is at most the number of prime divisors of $|\Aut(T)|$
and hence $s\leq \omega(|\Aut(T)|)$. We then have the inequality

\begin{eqnarray}\label{crude}
\sum_{i=1}^r\mathrm{fpr}_\Omega(x_i)&\leq&\frac{\omega(|\Aut(T)|)}{m(T)^\ell}+\frac{4}{15}+\frac{1}{59}.
\end{eqnarray}
Observe that~\cite[Table~$5.3A$]{KL} contains an explicit value of
$m(T)$ for each non-abelian simple group of Lie type. (Unfortunately
there are some inaccuracies in~\cite[Table~$5.3A$]{KL}, an amended table can be
found in~\cite[Table~$4$]{GMPSorders}.) Moreover, $m(T)$
for the $26$ sporadic simple groups can be extracted from~\cite{ATLAS}.
It is then a tedious computation, going through the list of the
finite non-abelian
simple groups and using the upper bound for
$\omega(n)$ in Lemma~\ref{numbertheory},  to check that the
right hand side of~\eqref{crude} is strictly less than $1$  except for 
the cases where $\ell=1$ and either
$T\cong\Alt(m)$ or $T\cong\PSL_2(q)$ (for $q\leq 11$). In
particular, apart from these exceptions, the theorem follows from
Lemma~\ref{lemma:apeman}.

Suppose next that $T\cong \Alt(m)$ and $\ell=1$. 
If $m=6$ then the right hand side of \eqref{crude} is 
$\frac{1}{2}+\frac{4}{15}+\frac{1}{59}<1$, so we may assume that
$m\ne6$ and hence that $\Aut(T)=\Sym(m)$.
Clearly, $s\leq \log(|g|)$. Let
$a$ be the maximum order of an element of $\Aut(T)$. 
We may alternatively write $g$ in the form $g=x\varphi\sigma$, 
with $x=(x_1,x_2)\in T^2$, $\varphi\in\{1, (12)\}\subset \Aut(T)$,
and $\sigma\in\Sym(2)$. If $\sigma=1$ then $g\in T\rtimes\Aut(T)$ 
and so has order at most $a^2$, while if $\sigma=(12)$, then  
$g^2=(x_1 x_2^\varphi, x_2 x_1^\varphi)$, and since 
$x_1 x_2^\varphi =(x_2^{-1}(x_2x_1^\varphi)x_2)^\varphi$, it follows that
$|g^2|=|x_1x_2^\varphi|\leq a$, so $|g|\leq 2a$. Thus in both cases
$|g|\leq a^2$ and hence $s\leq\log(|g|)\leq 2\log(a)$. Also, from 
Lemma~\ref{mass} we see that
$$\log(a)\leq \sqrt{m\log m}\left(1+\frac{\log(\log(m))-0.975}{2\log(m)}\right)$$
for every $m\geq 3$. Now using this new upper bound for $s$, we see
with another tedious computation that the right hand side
of~\eqref{stillgood} is strictly less than $1$ for every $m\geq 27$.

The remaining cases (that is, $T\cong \Alt(m)$ for $m\leq 26$ and
$T\cong \PSL_2(q)$ for $q\leq 11$) can be easily checked by hand by
computing explicitly the number of prime divisors of $\Aut(T)$ and by
using this upper bound on $s$ in~\eqref{stillgood}. In all cases, the
right hand side of~\eqref{stillgood} is strictly less than $1$ and
hence the proof follows from Lemma~\ref{lemma:apeman}.
\end{proof}

\noindent\emph{Proof of Corollary~\ref{cor1.5}.}\quad 
Let $T$ be a non-abelian simple group and $\sigma\in\Aut(T)$. Then 
$G=(T\times T)\cdot\langle\sigma\rangle\leq T\rtimes \Aut(T)$
is a primitive group of Diagonal type, as above, with 
$\ell=1$, and the element $\sigma$ of $\Aut(T)$ induces its 
natural action on $T$. By Theorem \ref{thrm:SD}, 
$\sigma$ has a cycle of length $|\sigma|$ on $T$.

\section{Primitive almost simple groups}\label{sec:AStype}

In this section we consider the primitive almost simple groups $G\leq\Sym(\Omega)$, 
apart from the classical groups which are dealt with in \cite{GS}.
That is to say, we assume that $T\vartriangleleft G\leq\Aut(T)$ for a non-abelian 
simple group $T$ which is not a classical group. We
subdivide the proof according to whether $T$ is a sporadic
group, an exceptional group of Lie type or an alternating group.

We start by specifying our notation. We denote
by $\Irr(G)$ the set of complex
irreducible characters of $G$. We recall that the \emph{principal character}
$\chi_0$ of $G$ is defined by $\chi_0(g)=1$, for each $g\in G$. (It is sometimes also denoted $\chi_0=1_G$.)
Let $\eta$ be a complex character of $G$ and let $\chi\in\Irr(G)$. 
We say that $\chi$ is a
\emph{constituent} of $\eta$ if the inner product $\langle
\eta,\chi\rangle_G\neq 0$. 
%Finally, if $G$ is a permutation group on
%$\Omega$ and $g\in G$, then $\Fix_\Omega(g)=\{\alpha\in \Omega\mid
%\alpha^g=\alpha\}$.
%
%
The following lemma is well-known and we refer
to~\cite[Lemma~$2.7$]{LS} for the statement closest to our needs.

\begin{lemma}\label{obvious2}
Let $G\leq\Sym(\Omega)$ and let $g\in
G$. Assume that the derived subgroup $G'$ of $G$ is transitive.
Then, there exists a non-principal constituent $\chi$ of the
permutation character of $G$ such that 
$$
\mathrm{fpr}_\Omega(g)\leq\frac{1+|\chi(g)|}{1+\chi(1)}.
$$
\end{lemma}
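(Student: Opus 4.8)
The plan is to deduce Lemma~\ref{obvious2} from the standard inequality for a single non-principal constituent together with averaging over the permutation character. First I would recall the elementary identity from Lemma~\ref{obvious3} (or directly from counting fixed points): if $\pi$ denotes the permutation character of $G$ on $\Omega$, then $\pi(g)=|\Fix_\Omega(g)|$, so $\fpr_\Omega(g)=\pi(g)/|\Omega|=\pi(g)/\pi(1)$. Decompose $\pi=\chi_0+\sum_{i=1}^{k}a_i\chi_i$ into irreducible constituents, where $\chi_1,\dots,\chi_k$ are the distinct non-principal irreducible constituents occurring with multiplicities $a_i\geq1$; here $k\geq1$ because $G'$ is transitive forces $\Omega$ to have size $>1$ (a transitive perfect-ish subgroup cannot fix a point unless $|\Omega|=1$), so $\pi\neq\chi_0$.

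Next I would set up the bound coordinate-wise. For each $i$ we have the trivial estimate $|\chi_i(g)|\le\chi_i(1)$, hence
\[
|\pi(g)|\le 1+\sum_{i=1}^{k}a_i|\chi_i(g)|.
\]
The goal is an inequality of the shape $\pi(g)/\pi(1)\le (1+|\chi_j(g)|)/(1+\chi_j(1))$ for a suitable index $j$. The key combinatorial point is a "mediant" inequality: if $\tfrac{p_i}{q_i}$ are non-negative rationals with $q_i>0$, then $\tfrac{\sum p_i}{\sum q_i}\le \max_i \tfrac{p_i}{q_i}$. Apply this with the $k+1$ fractions $\tfrac{1}{1}$ (from the principal constituent, noting $|\chi_0(g)|=1=\chi_0(1)$) and, for each $i$, the $a_i$ copies of $\tfrac{|\chi_i(g)|}{\chi_i(1)}$ — equivalently the single fraction $\tfrac{a_i|\chi_i(g)|}{a_i\chi_i(1)}$. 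Since $\pi(1)=1+\sum a_i\chi_i(1)$ and $|\pi(g)|\le 1+\sum a_i|\chi_i(g)|$, the mediant inequality yields
\[
\fpr_\Omega(g)=\frac{\pi(g)}{\pi(1)}\le\frac{1+\sum_i a_i|\chi_i(g)|}{1+\sum_i a_i\chi_i(1)}\le\max\left\{\frac{1}{1},\ \max_{1\le i\le k}\frac{|\chi_i(g)|}{\chi_i(1)}\right\}.
\]
If the maximum is attained at $\tfrac11$ this is $\fpr_\Omega(g)\le1$, which is vacuous but harmless; the statement we want is obtained by instead grouping the principal constituent together with the maximising non-principal one. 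Precisely, let $\chi=\chi_j$ be a non-principal constituent maximising $|\chi_i(g)|/(1+\chi_i(1))$ (or a slight variant), and bound the remaining constituents against it; the cleanest route is to apply the mediant inequality to just the two fractions $\tfrac{1}{1}$ and $\tfrac{\sum_{i}a_i|\chi_i(g)|}{\sum_i a_i\chi_i(1)}$, then dominate the second by $\tfrac{|\chi_j(g)|}{\chi_j(1)}$ for the maximising $j$, and finally check elementarily that $\tfrac{1+X}{1+Y}$ is monotone so that combining $\tfrac11$ with $\tfrac{|\chi_j(g)|}{\chi_j(1)}$ gives the bound $\tfrac{1+|\chi_j(g)|}{1+\chi_j(1)}$.

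The only real obstacle is bookkeeping: making sure the constituent $\chi$ is genuinely non-principal (which is where the hypothesis that $G'$ is transitive enters, guaranteeing $k\ge1$) and handling the degenerate coordinates where $\chi_i(g)=0$ or $\chi_i(1)$ is small. None of this is deep; it is the standard folklore argument and one can simply cite \cite[Lemma~2.7]{LS} for the precise form, supplying the two-line mediant-inequality proof above for completeness. I expect the write-up to be three or four lines, with the mediant inequality $\frac{a+c}{b+d}\le\max\{\frac ab,\frac cd\}$ (for positive denominators) as the single named tool.
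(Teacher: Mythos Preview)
Your approach is correct and is essentially the standard folklore argument; the paper itself gives no proof at all, simply citing \cite[Lemma~2.7]{LS}. So you are supplying what the paper omits, and the mediant-inequality route you outline is exactly the expected one.

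Two small points of cleanup. First, the hypothesis that $G'$ is transitive is used to ensure $G$ is transitive, so that the principal character occurs in $\pi$ with multiplicity exactly~$1$; your explanation of this (``forces $|\Omega|>1$'') is slightly garbled, but the decomposition $\pi=\chi_0+\sum_i a_i\chi_i$ you use is the correct consequence. Second, your ``cleanest route'' paragraph is a little vague about which $j$ to take and why the final inequality holds. The clean version is: choose $j$ maximising the ratio $|\chi_i(g)|/\chi_i(1)$ among the non-principal constituents. Then the mediant inequality, grouping $\tfrac{1}{1}$ with the $j$-th term, gives
\[
\frac{1+\sum_i a_i|\chi_i(g)|}{1+\sum_i a_i\chi_i(1)}
\;\le\;\max\Bigl\{\frac{1+a_j|\chi_j(g)|}{1+a_j\chi_j(1)},\ \max_{i\ne j}\frac{|\chi_i(g)|}{\chi_i(1)}\Bigr\}
=\frac{1+a_j|\chi_j(g)|}{1+a_j\chi_j(1)},
\]
and the elementary inequality $(1+a x)/(1+a y)\le(1+x)/(1+y)$ for $a\ge1$ and $0\le x\le y$ (equivalent to $x\le y$) finishes the job. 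The ``monotonicity'' you allude to is precisely this last step; it is not a single-variable monotonicity, and it does use $a_j\ge1$, so it is worth stating explicitly.
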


\subsection{The sporadic simple groups}\label{sec:sporadic}
The main result of this section is the following. Our proof
makes use of information in~\cite{ATLAS}, and some
calculations with the computer algebra system
\textsc{Magma}~\cite{magma}.

\begin{theorem}\label{thrm:sporadic}Let $G$ be a
  primitive almost simple group on $\Omega$ with  socle a sporadic simple
  group. Then each element of $G$ has a regular cycle in $\Omega$.
\end{theorem}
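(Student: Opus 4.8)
The plan is to combine Lemma~\ref{lemma:apeman} with the character-theoretic bound of Lemma~\ref{obvious2} and explicit data from the \textsc{Atlas}~\cite{ATLAS}. By Lemma~\ref{basic} it suffices to treat elements $g\in G$ of square-free order; write $|g|=p_1\cdots p_r$ with the $p_i$ distinct primes. By Lemma~\ref{lemma:apeman}, $g$ has a regular cycle provided $\sum_{i=1}^r\fpr_\Omega(g^{|g|/p_i})<1$, so the whole task reduces to finding, for each maximal subgroup $H$ of each almost simple group $G$ with sporadic socle (equivalently each primitive action), a good enough upper bound on the fixed-point-ratios of the relevant prime-power-index powers $g^{|g|/p_i}$. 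Since $G'=T$ is transitive on $\Omega$, Lemma~\ref{obvious2} gives $\fpr_\Omega(x)\le (1+|\chi(x)|)/(1+\chi(1))$ for some non-principal irreducible constituent $\chi$ of the permutation character; bounding $|\chi(x)|\le\chi(1)$ trivially, or better via the character table, yields a uniform bound $\fpr_\Omega(x)\le 2\chi(1)/(1+\chi(1))$, which combined with the minimal degree $d_0=d_0(T)$ of a non-trivial irreducible representation gives $\fpr_\Omega(x)\le 2/(1+d_0)$ roughly. Then $\sum_i\fpr_\Omega(g^{|g|/p_i})\le r\cdot\frac{2}{1+d_0}$, and since $g$ has square-free order, $r=\omega(|g|)$ is bounded by the number of distinct primes dividing $|G|$, hence by a small constant depending only on $T$.

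First I would dispose of the ``generic'' cases: for each of the $26$ sporadic simple groups $T$, the minimal non-trivial character degree $d_0(T)$ is large compared with $\omega(|\Aut(T)|)$, so the crude estimate above already gives $\sum_i\fpr_\Omega(g^{|g|/p_i})<1$ for every $g$ and every primitive $\Omega$. This handles all but finitely many (group, action) pairs, namely those large sporadic groups, in one stroke. For the small sporadic groups — $M_{11},M_{12},M_{22},M_{23},M_{24},J_1,J_2,\ldots$ — where $d_0(T)$ is not large enough to absorb $\omega$, the crude bound can fail, and one must be more careful. Here I would use the full \textsc{Atlas} character tables: for each such $T$, each $G$ with $T\le G\le\Aut(T)$, and each maximal subgroup $H$ (listed in the \textsc{Atlas}), compute the permutation character $\mathbf{1}_H^G$, identify its irreducible constituents, and evaluate $\fpr_\Omega(x)=\langle\mathbf{1}_H^G,\ \mathrm{perm}\rangle$ directly via the class-fusion data, i.e.\ $\fpr_\Omega(x)=|x^G\cap H|/|x^G|$ by Lemma~\ref{obvious3}. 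For the handful of elements $g$ of square-free order where $\sum_i\fpr_\Omega(g^{|g|/p_i})\ge 1$ one cannot conclude from Lemma~\ref{lemma:apeman}, and one falls back on the exact criterion~\eqref{Fix}: check by an explicit \textsc{Magma} computation on a permutation representation of $G$ on $\Omega$ that $\bigcup_{p\mid|g|}\Fix_\Omega(g^{|g|/p})\subsetneq\Omega$, i.e.\ that $g$ genuinely has a cycle of length $|g|$.

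The main obstacle I anticipate is precisely this residual finite check: there will be a (hopefully short) list of triples $(G,\Omega,g)$ — with $G$ a small sporadic almost simple group, $\Omega$ a primitive action of small degree, and $g$ of square-free order with several small prime divisors (typically $2,3,5$ and perhaps $7$) — for which the fixed-point-ratio sum exceeds $1$ and the regular cycle has to be exhibited directly. Organising this list, ruling out the apparent counterexamples, and verifying by computer that each such $g$ does have a regular cycle is the delicate part; it is routine in principle but requires care to be exhaustive over all primitive representations of all the relevant groups. Everything else is a uniform estimate. The conclusion is then that every element of every primitive almost simple group with sporadic socle has a regular cycle, as claimed, and in particular no such group appears among the exceptions in Conjecture~\ref{conj} or Theorem~\ref{thrm:main}.
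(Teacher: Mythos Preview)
Your overall strategy---reduce to square-free order via Lemma~\ref{basic}, apply the fixed-point-ratio criterion of Lemma~\ref{lemma:apeman}, bound $\fpr_\Omega$ via Lemma~\ref{obvious2} and \textsc{Atlas} data, and finish the residual cases by direct computation in \textsc{Magma}---is exactly the one the paper uses. Two points, however, deserve correction.

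First, your derivation of the crude bound is not right. Lemma~\ref{obvious2} gives $\fpr_\Omega(x)\le (1+|\chi(x)|)/(1+\chi(1))$ for \emph{some} non-principal constituent $\chi$ that you do not get to choose; replacing $|\chi(x)|$ by $\chi(1)$ yields only the trivial bound $1$, and there is no reason the relevant $\chi$ should have degree $d_0(T)$, so the inequality $\fpr_\Omega(x)\le 2/(1+d_0)$ does not follow. What the paper actually does is compute, from the \textsc{Atlas}, the quantity
\[
a_0=\max\left\{\frac{1+|\chi(x)|}{1+\chi(1)}:x\in G\setminus\{1\},\ \chi\in\Irr(G)\setminus\{\chi_0\}\right\},
\]
which is a valid uniform bound for $\fpr_\Omega(x)$ regardless of which constituent Lemma~\ref{obvious2} produces. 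The paper also uses the sharper observation (again from the \textsc{Atlas}) that $\omega(|g|)\le 3$ for every element of every almost simple group with sporadic socle---much better than bounding $\omega(|g|)$ by $\omega(|G|)$.

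Second, and relatedly, your prediction about the residual cases is back to front. With the correct bound $a_0$, the groups for which $\omega(|g|)\cdot a_0\ge 1$ are not the small Mathieu and Janko groups but rather $Co_2$, $Fi_{22}$ and $Fi_{22}{:}2$, each when $\omega(|g|)=3$ (elements of order $30$ or $42$); the obstruction is a single low-degree character $\chi_2$ taking a large value. The paper then uses Frobenius reciprocity to restrict to the handful of maximal subgroups $H$ with $\langle 1_H,(\chi_2)_H\rangle\neq 0$, and only for those actions is a direct \textsc{Magma} check needed. So the residual list is shorter and differently shaped than you anticipate, but the endgame is as you describe.
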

\begin{proof}
Let $T$ be the socle of $G$.  Our proof is a case-by-case analysis for each
sporadic simple group $T$. Let $g\in G$. Write $n=|g|$ and recall that 
$\omega(n)$ is the number of distinct
prime divisors of $n$. By Lemma~\ref{basic}, we may assume that $n$
is square-free. Moreover, we may assume that $n$ is not a prime since every
element of prime order $n$ has a cycle of length $n$. Note that
%, for each group $G$ and each element $g$, 
the number $\omega(|g|)$ can be easily
obtained from~\cite{ATLAS}: and, in particular, in each case,
$\omega(|g|)\leq 3$.
Set 
$$
a_0:=\max\left\{\frac{1+|\chi(x)|}{1+\chi(1)}\mid x\in
G\setminus\{1\},\chi\in \Irr(G)\setminus\{\chi_0\}
\right\}.
$$ 
Suppose first that
\begin{equation}\label{eqthrmspor}
\omega(n)\,a_0<1.
\end{equation}
For each prime $p$ with $p\mid n$, let $\chi_{p}$ be a non-principal irreducible
constituent of the permutation character of $G$, as in Lemma~\ref{obvious2}, such that
$$
\mathrm{fpr}_\Omega(g^{n/p})\leq \frac{1+|\chi_p(g^{n/p})|}{1+\chi_p(1)}.
$$
Then
\begin{eqnarray*}
\sum_{\substack{p\mid n\\p\,\mathrm{prime}}}\mathrm{fpr}_\Omega(g^{n/p})\leq
\sum_{\substack{p\mid
    n\\p\,\mathrm{prime}}}\frac{1+|\chi_p(g^{n/p})|}{1+\chi_p(1)}\leq \omega(n)a_0<1
\end{eqnarray*}
and by Lemma~\ref{lemma:apeman}, the element $g$ has a cycle of length $n$.

A direct inspection in~\cite{ATLAS} shows that~\eqref{eqthrmspor}
always holds except when $\omega(n)=3$ and $G$ is one of the groups
$Co_2, Fi_{22}$ or $Fi_{22}:2$. Thus we may assume that $G$ is one of these three groups
and that $\omega(n)=3$. The elements of square-free
order $n$ with $\omega(n)=3$ in these groups either have order $30$, 
or in the case of $Fi_{22}:2$, they may have order $30$ or $42$. Moreover, for
each of these groups there exists a unique irreducible character (denoted
by $\chi_2$ in~\cite{ATLAS}) with
$$
3\frac{1+|\chi_2(x)|}{1+\chi_2(1)}\geq 1,
$$
for some $x\in G\setminus\{1\}$. In particular, if $\chi_2$ is not a
constituent of the permutation character of $G$, then we can use the previous argument (where
the maximum in $a_0$ runs through the irreducible characters different
from $\chi_2$) and we obtain that $g$ has a regular cycle. 
Therefore we may assume that $\chi_2$ is a constituent of the
permutation character $\pi$ of $G$.

Let $H$ be the stabilizer of a point of $\Omega$. From Frobenius reciprocity we
have
\begin{equation}\label{eq2}
0\neq \langle \pi,\chi_2\rangle_G=\langle
1_{H}^G,\chi_2\rangle_G=\langle 1_H,(\chi_2)_H\rangle_H.
\end{equation}
Now the \textsc{magma} libraries of complex characters have the good
taste to contain, for every maximal subgroup  $M$ of $G$, the
irreducible complex characters of $M$. In particular, for each $G\in
\{Co_2,Fi_{22},Fi_{22}:2\}$ and for each maximal subgroup $M$ of $G$,
we can compute $\langle 1,(\chi_2)_M\rangle$. This number is not $0$
only in the cases described in Table~\ref{table1}.

\begin{table}[!ht]
\begin{center}
\begin{tabular}{|l|l|}\hline
$G$&$M$\\\hline
$Co_2$&$M_{23}$ or $McL$\\
$Fi_{22}$&$2^{10}.M_{22}$ or $M_{12}$\\
$Fi_{22}:2$&$2^{10}.M_{22}.2$ or $M_{12}.2$\\\hline
\end{tabular}
\end{center}
\caption{Maximal subgroups $M$ with $\langle 1_M,(\chi_2)_M\rangle\neq 0$}\label{table1}
\end{table}

So in view of~\eqref{eq2}, we may assume that $H$ is one of the
maximal subgroups $M$ in Table~\ref{table1}. Using the information in~\cite{ATLAS}, 
in each of these six cases we can construct $G$ and $H$ in \textsc{Magma} and find 
representatives of each of the conjugacy classes of elements of order $30$ and $42$ 
in $G$. For each representative $g$ and for one of the generators $x$ of $G$ given 
by \cite{ATLAS} we see that $Hxg^i\neq Hxg^j$ for all distinct $i,j\in\{1,\ldots,|g|\}$. 
Thus $g$ has a regular cycle.

%However, using the information
%in~\cite{ATLAS} it is easy to check that $M_{23}$, $2^{10}.M_{22}$ and
%$M_{12}$ have no elements of order $30$, and that $2^{10}.M_{22}.2$
%and $M_{12}.2$ have no elements of order $30$ or $42$. 
%\blue{CEP: ouch: this just says that $g$ is a derangement. not that it has a regular cycle.}

%So there is
%only one case that needs to be considered $G=Co_2$, $H=McL$ and $g$ is
%an element of order $30$. With this information in our hands, we can
%afford to construct the permutation representation with a computer
%(generators of $G$ and $H$ can be found in~\cite{onlineAtlas}) and check that the
%elements of order $30$ of $G$ have a cycle of length $30$.
\end{proof}

\subsection{Exceptional groups of Lie type}\label{section}

For exceptional groups of Lie type Lawther, Liebeck and
Seitz~\cite[Theorem~$1$]{LLS} have obtained
useful  and explicit upper bounds on
$\fpr_\Omega(x)$. With their result we can prove the following theorem.

\begin{theorem}\label{thrm:excep}Let $G$ be a finite primitive group on $\Omega$ with
  socle an exceptional simple group of Lie type. Then each element of $G$ has a regular cycle on $\Omega$.
\end{theorem}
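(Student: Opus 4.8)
The plan is to bound $\sum_{p\mid |g|}\fpr_\Omega(g^{|g|/p})$ by $1$ for all relevant $g$, and then invoke Lemma~\ref{lemma:apeman}. As in the proof of Theorem~\ref{thrm:sporadic}, by Lemma~\ref{basic} we may assume $g$ has square-free order $n=p_1\cdots p_r$, and we may assume $n$ is not prime (so $r\geq 2$), since elements of prime order automatically have a regular cycle. The key external input is the Lawther--Liebeck--Seitz bound~\cite[Theorem~$1$]{LLS}, which gives an explicit constant $\epsilon=\epsilon(T)$ (tending to $0$ as $|T|\to\infty$ through the exceptional families) such that $\fpr_\Omega(x)\leq |x^G|^{-1+\epsilon}$, or more crudely $\fpr_\Omega(x)\leq C|T|^{-\delta}$ for suitable absolute $C,\delta>0$ valid for all primitive $\Omega$. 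Combined with an upper bound on $r=\omega(n)$ — note $n$ divides $|g|$ which divides the exponent of $\Aut(T)$, and $\omega(n)$ can be bounded via Lemma~\ref{numbertheory} together with known bounds on element orders in exceptional groups — one gets $\sum_{i=1}^r \fpr_\Omega(g^{n/p_i}) \leq r\cdot C|T|^{-\delta} < 1$ for all $T$ of sufficiently large order.

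First I would record the precise form of the LLS bound I want to use, and the precise bound on $\omega(|g|)$ coming from the (well-documented) element orders in exceptional groups $\mathrm{G}_2(q)$, $\mathrm{F}_4(q)$, $\mathrm{E}_6(q)$, ${}^2\mathrm{E}_6(q)$, $\mathrm{E}_7(q)$, $\mathrm{E}_8(q)$, ${}^2\mathrm{B}_2(q)$, ${}^2\mathrm{G}_2(q)$, ${}^3\mathrm{D}_4(q)$, ${}^2\mathrm{F}_4(q)$ and their almost simple extensions; in every case $\omega(|g|)$ is small (bounded by a constant like $6$, with the biggest contributions in $\mathrm{E}_8(q)$). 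Then the inequality $\omega(|g|)\cdot(\text{LLS bound}) < 1$ holds outside a finite list of small groups — essentially the groups of smallest order in each family, e.g. $\mathrm{G}_2(2)'\cong \mathrm{U}_3(3)$ (not exceptional, excluded), ${}^2\mathrm{B}_2(8)$, ${}^2\mathrm{G}_2(27)$, $\mathrm{G}_2(3)$, $\mathrm{G}_2(4)$, ${}^3\mathrm{D}_4(2)$, ${}^2\mathrm{F}_4(2)'$, $\mathrm{F}_4(2)$, and perhaps a handful more. For these finitely many exceptions I would argue directly: the character tables (and maximal subgroups, hence permutation characters) of these groups and their automorphism extensions are available in~\cite{ATLAS} or in \textsc{Magma}~\cite{magma}, so one repeats the argument of Theorem~\ref{thrm:sporadic} — either find a non-principal constituent $\chi$ of the permutation character with $\sum_p (1+|\chi(g^{n/p})|)/(1+\chi(1)) < 1$ (Lemma~\ref{obvious2}), or when that fails, exhibit a regular cycle computationally by checking $Hxg^i\neq Hxg^j$ for $0\le i<j<|g|$ for a generator $x$ and each class representative $g$ of non-prime square-free order.

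The main obstacle I expect is the bookkeeping for the small exceptional groups rather than any conceptual difficulty: for these groups the generic LLS bound is too weak, some have several inequivalent primitive actions, and the relevant element orders of non-prime square-free type (e.g. orders $6, 10, 14, 15, 21, 30, \dots$ in $\mathrm{F}_4(2)$, ${}^3\mathrm{D}_4(2)$, ${}^2\mathrm{F}_4(2)'$, $\mathrm{E}_6(2)$-type situations if they sneak in) must be enumerated and each handled. A secondary subtlety is making sure the bound on $\omega(|g|)$ is genuinely uniform over the almost simple extensions (field and graph-field automorphisms can raise element orders), but since $|\Aut(T):T|$ is small and bounded, Lemma~\ref{numbertheory} applied to $\exp(\Aut(T))$ still yields a small bound on $\omega(|g|)$. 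Once the finitely many small cases are cleared, the generic estimate closes out everything, and the theorem follows from Lemma~\ref{lemma:apeman}.
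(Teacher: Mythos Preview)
Your proposal is correct and follows essentially the same strategy as the paper: bound $\sum_p \fpr_\Omega(g^{|g|/p})$ by $(\text{number of prime divisors of }|g|)\cdot(\text{LLS bound})$ and invoke Lemma~\ref{lemma:apeman}. Two points of comparison are worth noting. First, for the generic bound on $\omega(|g|)$ the paper uses the clean inequality $\omega(|g|)\leq \log_2(|g|)\leq \log_2(m(T))$ (the latter from~\cite{GMPSorders}), which avoids case-by-case element-order analysis. Second, your treatment of the finitely many small exceptions is heavier than necessary: rather than invoking permutation characters or explicit coset computations as in the sporadic argument, the paper simply reads off from~\cite{ATLAS} the actual maximal value of $\omega(|g|)$ over $\Aut(T)$ for each of the five leftover groups ($E_6(2)$, $F_4(2)$, ${}^3D_4(2)$, $G_2(3)$, $G_2(4)$), and with these sharper constants the LLS bound already gives $\omega(|g|)\cdot\fpr_\Omega(x)<1$ directly.
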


\begin{proof}
Let $T$ be the socle of $G$, let $g\in G$, and let $m(T)$ be the minimum 
degree of a faithful permutation representation of $T$.
By Lemma~\ref{basic}, we may assume that $g$ has square-free order
$p_1\cdots p_s$. Now, $s\leq \log_2(|g|)\leq \log_2(m(T))$ (where the
last inequality follows from the main result of~\cite{GMPSorders}).  A comprehensive table
containing $m(T)$ for each exceptional simple group $T$ of Lie type 
can be found in~\cite[Table~$4$]{GMPSorders}.

Now the proof follows easily from
Lemma~\ref{lemma:apeman} by a case-by-case analysis. We discuss
here with full details the case  $T=E_8(q)$. We see that
$m(T)=(q^{30}-1)(q^{12}+1)(q^{10}+1)(q^6+1)/(q-1)$ and so $m(T)\leq
q^{58}$. Also from~\cite{LLS}, we see that $\fpr_\Omega(x)\leq
1/(q^8(q^4-1))$, for every $x\neq 1$. Now the inequality
$$\frac{\log_2(q^{58})}{q^8(q^4-1)}<1$$
is satisfied for all $q$. All the other cases are similar.

The only simple groups $T$  where this approach does not work are
$E_6(2)$, $F_4(2)$, ${}^3D_4(2)$, $G_2(3)$ and
$G_2(4)$. However, we can see from~\cite{ATLAS} that an element of
$\Aut(T)$ is at most the product of $8$, $3$, $2$,
$2$, $2$, distinct primes, respectively. With this new upper bound on $s$
and using~\cite{LLS}
it is straighforward to see that $s\fpr_\Omega(x)<1$, for every $x\neq
1$. Now the proof follows as usual from Lemma~\ref{lemma:apeman}.
\end{proof}

\subsection{The Alternating Groups}\label{alt}
Suppose that $G$ is almost
simple with socle $\Alt(m)$, for some $m\geq
5$. In most cases $G\leq \Sym(m)$, and we make a brief 
formal comment about the situation where this does not hold. 

\begin{remark}\label{rem}
{\rm There are three primitive actions of groups $G$ satisfying
$\Alt(6)\leq G\leq\Aut(\Alt(6))$ but $G\not\leq\Sym(6)$. The groups are  
$M_{10}, \PGL_2(9)$, and $\Aut(A_6)$ and they all have primitive actions of degrees 
$10, 36, 45$. A simple \textsc{Magma} computation shows that Conjecture~\ref{conj} 
is true for all these primitive actions.
}
\end{remark}

From now on we assume that $G=\Alt(m)$ or $\Sym(m)$.
First we prove Theorem~\ref{actiononsets} which shows that the action of $\Sym(m)$ on
$k$-sets is a genuine exception in Theorem~\ref{thrm:main}.

%\begin{proposition}\label{pro1}Let $k\geq 1$ and let $n_k$ be the sum
 % of the first $k$ prime numbers. Then every element $g\in \Sym(m)$
 % has a cycle of length $|g|$ in its action on the $k$-subsets of
  %$\{1,\ldots,m\}$ if and only if $m<n_{k+1}$.
%\end{proposition}
\begin{proof}[Proof of Theorem \ref{actiononsets}]
Let $k\leq m/2$.
Denote by $p_1,\ldots,p_{k+1}$ the first $k+1$ primes,
and let $n:=n_k=\sum_{i\leq k+1}p_i$.
Suppose first that $m\geq n$. Let $g=\sigma_1\cdots \sigma_{k+1}$ be a permutation having $k+1$
disjoint cycles $\sigma_1,\ldots,\sigma_{k+1}$ of length 
$p_1,\ldots,p_{k+1}$, respectively (such an element exists since $m\geq n$). Now, let $X$ be a
$k$-subset of $\{1,\ldots,m\}$. Since $|X|=k$, we see that $X$
intersects at most $k$ of the supports of the cycles of $g$. By the
pigeon-hole principle, there exists a cycle $\sigma_i$ not
intersecting $X$. In particular $\sigma_i$ fixes $X$ point-wise and
hence $X^{\langle g\rangle}$ has size at most $|g|/p_i$. 
Since this arguments holds for all $X$, it follows that $g$ has no regular
cycle on $k$-sets. 

On the other hand, suppose that $m<n$ and let $g\in \Sym(m)$. Write
$g=\sigma_1\cdots\sigma_t$ with $\sigma_1,\ldots,\sigma_t$ the
disjoint cycles of $g$, so $|g|=\lcm\{|\sigma_i|\mid i\in
\{1,\ldots,t\}\}$. Since $m<n$,  $g$ can have at most $k$ cycles
having pairwise coprime lengths, and hence, relabelling the index set
$\{1,\ldots,t\}$ if necessary, we may assume that
$$
|g|=\lcm\{|\sigma_i|\mid i\in \{1,\ldots,s\}\},
$$
for some $s\leq k$.  Write $\ell_i=|\sigma_i|$, for 
$i\in \{1,\ldots,s\}$, and $\ell=\sum_{i=1}^s\ell_i$.
In particular,
as $s\leq k$, there exists a $k$-subset $X$ of $\{1,\ldots,m\}$ intersecting the
support of $\sigma_i$ in at least one point, for each $i\in \{1,\ldots,s\}$. Write $x_i$ for the size
of the intersection of $X$ with the support of $\sigma_i$; so $x_i>0$ for each $i\leq s$.

Suppose that $k\leq \ell-s$. Then for each $i\in \{1,\ldots,s\}$, we can choose $X$ 
so that $X$ intersects the support of
$\sigma_i$ in $x_i$ consecutive points with $x_i<\ell_i$ and with
$\sum_{i=1}^sx_i=k$. Now it is clear that $X^{\langle g\rangle}$ has
size $|g|$.

Next suppose that $k>\ell-s$ and that $m\geq k+s$. Choose a 
subset $X_i$ of size $\ell_i-1$
from the support of $\sigma_i$, for each $i\in
\{1,\ldots,s\}$. Then $\cup_i X_i$ has size $\sum_i(\ell_i-1)=\ell-s$.
Since $m\geq k+s$, we have $m-\ell\geq k-(\ell-s)$ and so there
exists a subset $Y$ of $\{1,\ldots,m\}$ of size $k-(\ell-s)$ disjoint
from the support of $\sigma_i$, for each $i\in
\{1,\ldots,s\}$. Then $X:=(\cup_iX_i)\cup Y$ is a $k$-set 
and $X^{\langle g\rangle}$ has
size $|g|$.

Finally suppose that $k>\ell-s$ and $m<k+s$. Recalling that $k\leq m/2$, 
by adding the inequalities $k>\ell-s$ and $k>m-s$, we obtain
$$
m\geq 2k> (\ell-s)+(m-s).
$$
Hence $2s>\ell$. However this is a contradiction because
$\ell=\sum_{i=1}^s\ell_i\geq \sum_{i=1}^s2=2s$.
\end{proof}

It is interesting to observe that $n_k$ is asymptotic to
$k^2\log(k)/2$, see~\cite{BS} (the rate of convergence is actually
rather slow).

\subsubsection{Partition actions}

There is another important action of the symmetric group 
that we need to study before moving to a general action. 
%Namely the action on \textit{uniform partitions}. 
For $a,b\geq 2$, we say that a partition of the set $\{1,\ldots,ab\}$ into 
$b$ parts each of size $a$ is an \emph{$(a,b)$-uniform partition}.
The symmetric group $\Sym(ab)$ acts 
primitively on the set of $(a,b)$-uniform partitions, for every value of 
$a$ and $b$, and this action is faithful whenever $ab\neq 4$. (We note also that
the element $g=(1234)$ has cycles of lengths 1, 2 on the set of 
three $(2,2)$-uniform partitions, and hence has no regular cycle in this action.)

\begin{proposition}\label{uniformpartitions}
Let $a,b\geq 2$ such that $(a,b)\ne (2,2)$. Then every element of
$\Sym(ab)$ has a regular cycle on $(a,b)$-uniform partitions. 
\end{proposition}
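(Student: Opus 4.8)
The plan is to mimic the structure of the proof of Theorem~\ref{actiononsets}, replacing "$X$ meets at most $k$ cycle supports" with the analogous statement for uniform partitions, and then to argue that a suitably chosen partition escapes being fixed by any of the "dangerous" powers $g^{|g|/p}$. By Lemma~\ref{basic} we may assume $g\in\Sym(ab)$ has square-free order $|g|=p_1\cdots p_r$; by Lemma~\ref{lemma:apeman} it suffices to produce an $(a,b)$-uniform partition $\mathcal{P}$ with $\mathcal{P}\notin\Fix(g^{|g|/p_i})$ for every $i$. Writing $g=\sigma_1\cdots\sigma_t$ as a product of disjoint cycles (including fixed points as $1$-cycles) with $\ell_j=|\sigma_j|$, the key observation is that a power $g^{|g|/p}$ fixes a part $B$ of $\mathcal{P}$ setwise precisely when $B$ is a union of $\langle g^{|g|/p}\rangle$-orbits, i.e. $B$ is invariant under the $p$-part structure. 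So I want to build $\mathcal{P}$ so that, for each prime $p\mid|g|$, at least one part of $\mathcal{P}$ is \emph{not} $g^{|g|/p}$-invariant.

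First I would handle the generic/hard direction by a direct combinatorial construction. Since $|g|=\lcm\{\ell_j\}$ is square-free, for each prime $p_i$ there is a cycle $\sigma_{j(i)}$ with $p_i\mid\ell_{j(i)}$; choose one such cycle for each $i$ (these need not be distinct). The strategy is to construct a single part $B_1$ of size $a$ that, for every $i$, contains a point $x$ of $\mathrm{supp}(\sigma_{j(i)})$ but not $x^{g^{|g|/p_i}}$ — this forces $B_1$ (and hence $\mathcal{P}$) to move under every $g^{|g|/p_i}$. Concretely, on each chosen cycle I take a short arc of consecutive points (of length at most $\ell_{j(i)}-1$, chosen coprime to $p_i$ in the appropriate sense) so that applying $g^{|g|/p_i}$ shifts the arc off itself; since we only need $r$ such "witnesses" and $a\ge 2$, a greedy allocation of points to $B_1$ works as long as $ab$ is large enough — and then the remaining $ab-a$ points are partitioned arbitrarily into $b-1$ parts of size $a$. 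The bookkeeping is that one must fit $r$ witness-arcs (total size $O(\sum_{i}\text{something})$, but in fact bounded since square-free order forces the relevant cycles to be few and short) inside the single block of size $a$, together with enough "filler"; when $a$ is too small relative to $r$ this needs a separate argument.

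\textbf{The main obstacle} is exactly the small-parameter regime, analogous to the three boundary cases in the proof of Theorem~\ref{actiononsets}: when $a=2$ (so parts are just pairs, and there is very little room to place a non-invariant part) or when $b$ is small, or when $ab$ is small so that $\Sym(ab)$ is tiny. For $a=2$ the part $B_1=\{x,y\}$ is $g^{|g|/p}$-invariant iff $\{x,y\}$ is a $\langle g^{|g|/p}\rangle$-orbit or a union of two fixed points, so I would instead argue that a "perfect matching" can be chosen that is not invariant under any $g^{|g|/p}$ by a counting/pigeonhole argument comparing the number of $g^{|g|/p}$-invariant matchings to the total number of matchings of $\{1,\dots,2b\}$; the excluded case $(a,b)=(2,2)$ is precisely where $\Sym(4)$ has the element $(1234)$ with no regular cycle, and this degenerate exception must be carved out by hand exactly as flagged in the statement. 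For the finitely many remaining tiny cases (small $ab$) one simply checks directly, possibly citing a \textsc{Magma} computation as elsewhere in the paper. I expect the bulk of the write-up to be the explicit arc-allocation argument and the $a=2$ matching count; the rest follows formally from Lemma~\ref{basic} and Lemma~\ref{lemma:apeman}.
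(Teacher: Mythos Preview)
Your proposal has a genuine gap. You claim that building a single part $B_1$ which is not $g^{|g|/p_i}$-invariant ``forces $B_1$ (and hence $\mathcal{P}$) to move under every $g^{|g|/p_i}$'', and then you partition the remaining $ab-a$ points \emph{arbitrarily}. But the action of $g^{|g|/p}$ on $(a,b)$-uniform partitions fixes $\mathcal{P}$ precisely when it permutes the parts of $\mathcal{P}$ among themselves, not when it fixes each part setwise. So $B_1^{g^{|g|/p}}\ne B_1$ does \emph{not} imply $\mathcal{P}^{g^{|g|/p}}\ne\mathcal{P}$: it is entirely possible that $B_1^{g^{|g|/p}}=B_j$ for some $j\ne 1$, and if the remaining parts are chosen ``arbitrarily'' there is nothing preventing this. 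Your witness-arc idea controls where $B_1$ goes, but says nothing about whether its image coincides with another part. (You implicitly recognise this in the $a=2$ case, where you switch to a global counting argument on invariant matchings; the same issue is present for all $a$.)

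The paper avoids this trap by taking a rather different route. It does not use Lemma~\ref{basic} or Lemma~\ref{lemma:apeman} at all; instead, for an arbitrary $g$ (not assumed of square-free order) it explicitly constructs the \emph{entire} partition $\wp=\{A_1,\dots,A_b\}$ and then argues directly that $\wp^{g^n}=\wp$ forces $|g|\mid n$. The point is that one must control not just a distinguished part $A_1$ but also enough of the other parts to rule out $A_1^{g^n}=A_j$ for $j\ne 1$. The case division is according to the number $s$ of cycles needed to realise $|g|$ as an $\lcm$ and the position of $a$ relative to $s$ and $\ell-s$ (where $\ell=\sum_{i\le s}\ell_i$): the four cases $s=1$; $s\ge 2$ with $s\le a\le\ell-s$; $s\ge 2$ with $a>\ell-s$; and $s\ge 2$ with $a<s$ are treated separately, and in each the remaining parts $A_2,\dots,A_b$ are built iteratively so that $A_1$ is recognisably distinguished from every other $A_j$ by its intersection pattern with the cycle supports. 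If you want to repair your approach you would need, at minimum, to specify the other $b-1$ parts carefully and argue that $B_1^{g^{|g|/p}}$ cannot equal any of them; at that point you are essentially doing what the paper does.
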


We prove Proposition \ref{uniformpartitions} via a sequence of lemmas. 
The proof is similar to the proof of Theorem~\ref{actiononsets}, but 
unfortunately slightly more technical.  First we set up some notation.
Assume that $ab\ne 4$ and let $g\in\Sym(ab)$. Write 
$g=\sigma_1\cdots\sigma_t$ with $\sigma_1,\ldots,\sigma_t$ the disjoint 
cycles of $g$ on $\{1,\dots,ab\}$. Then $|g|=\lcm\{|\sigma_i|\mid i \in 
\{1,\ldots,t\}\}$. 
Relabelling the index set $\{1,\ldots,t\}$ if necessary, we may 
assume that $|g|=\lcm\{|\sigma_{1}|,\ldots,|\sigma_{s}|\}$, and that
$|g|$ is not the least common multiple of fewer than $s$ of the
$|\sigma_i|$. Denote by $L_i$ the support of $\sigma_i$, 
and let  $\ell_i=|L_i|$, for each $i\in \{1,\ldots,t\}$. 
Then $\ell=\sum_{i=1}^s\ell_i$. We may assume that $\sigma_1=
(1,\ldots,\ell_1)$, $\sigma_2=(\ell_1+1,\ldots,\ell_1+\ell_2)$, etc. 
Moreover we may reorder the cycles so that $\ell_1\leq \ell_2\leq \cdots \leq \ell_s$.

\begin{lemma}\label{lem1}
 If $ab\ne4$ and $g$ has a regular cycle on $\{1,\dots,ab\}$ (that is, $s=1$), 
then $g$ has a regular cycle on $(a,b)$-uniform partitions.
\end{lemma}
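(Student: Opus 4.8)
The plan is to reduce everything to the case where $g$ consists of a single $\ell_1$-cycle, $\sigma_1=(1,\dots,\ell_1)$, together with fixed points (i.e.\ $s=1$ means $|g|=|\sigma_1|$), and then to exhibit an explicit $(a,b)$-uniform partition whose $\langle g\rangle$-orbit has full length $|g|=\ell_1$. First I would dispose of the trivial possibility $g=1$: if $\ell_1=1$ there is nothing to prove. So assume $\ell_1=|g|\geq 2$, and write $n=ab$. The key idea is to build a partition $\mathcal{P}$ in which the action of $\sigma_1$ on the points of its support ``leaks through'' to the block structure, forcing the orbit of $\mathcal{P}$ under $g$ to have length exactly $\ell_1$. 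Concretely, I would try to place the $\ell_1$ points of $L_1$ into blocks in such a way that no nontrivial power $g^j$ ($0<j<\ell_1$) fixes $\mathcal{P}$, using the fact that $\sigma_1$ acts on $L_1$ as a single $\ell_1$-cycle so that $g^j$ moves every point of $L_1$.

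The main case analysis should mirror the proof of Theorem~\ref{actiononsets}. If $\ell_1 < a$, one can choose a single block $B$ of the partition to contain all of $L_1$ together with $a-\ell_1$ of the fixed points; but then $g$ fixes $B$ setwise, which is bad, so instead I would spread $L_1$ across two or more blocks. The cleanest construction: pick a block $B_0$ containing exactly one point of $L_1$, say the point $1$, and $a-1$ fixed points; then $B_0^{g^j}$ contains the point $1^{g^j}=1+j\pmod{\ell_1}$, which for $0<j<\ell_1$ differs from $1$, and since the remaining $a-1$ points of $B_0$ are fixed, $B_0^{g^j}$ is a block of a \emph{different} partition unless that new point-set happens to coincide with some other block of $\mathcal{P}$. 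So the real work is to arrange the other blocks to avoid such accidental coincidences — this is where I expect to need $(a,b)\neq(2,2)$ and $ab\neq 4$, and where one must be slightly careful about small parameters (the excerpt already flags $g=(1234)$ on $(2,2)$-partitions as the genuine exception). If instead $\ell_1\geq a$, I would distribute $L_1$ so that one block contains a set of $a$ points forming a nontrivial ``arc'' $\{i,i+1,\dots,i+a-1\}$ of the cycle when $\ell_1>a$, or, when $a\mid \ell_1$, break $L_1$ into consecutive arcs of length $a$ and note that $g$ permutes these blocks in an $\ell_1/a$-cycle while still acting on fixed-point blocks trivially — but one must then check the $\lcm$ works out to $\ell_1$ and not a proper divisor, which is automatic because $\sigma_1$ is a single cycle.

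The main obstacle, as in Theorem~\ref{actiononsets}, will be the boundary cases where $n=ab$ is just barely large enough to fit the construction, and in particular ruling out accidental stabilization of $\mathcal{P}$ by a proper power of $g$ when there are very few fixed points (or none). When $L_1$ exhausts all of $\{1,\dots,ab\}$, i.e.\ $\ell_1=ab$, there are no fixed points to use as ``markers'', and one must instead exploit the block \emph{sizes}: choosing blocks to be arcs of unequal gap-pattern along the cycle. I would handle this by selecting the blocks $B_1,\dots,B_b$ to be intervals $\{1,\dots,a\},\{a+1,\dots,2a\},\dots$ of the $\ell_1$-cycle; then $g$ cyclically shifts this partition through $\ell_1$ distinct partitions (a shift by $j$ points yields a partition whose blocks are the intervals $\{1+j,\dots,a+j\}$ etc., and these $b$ intervals partition $\{1,\dots,ab\}$ into a \emph{different} collection of blocks unless $a\mid j$), so the orbit length is $\ell_1/\gcd(\ell_1,a)\cdot$(something) — here I would instead perturb one block to break the periodicity, e.g.\ use blocks $\{1,\dots,a\}$ and then a ``twisted'' choice for the rest, which is exactly the kind of technical fiddling the authors warn about. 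Once the single-cycle case is settled, later lemmas (not in this excerpt) presumably combine cycles via the $\lcm$/CRT argument as in Theorem~\ref{thrm:PA} and the proof of Theorem~\ref{actiononsets}.
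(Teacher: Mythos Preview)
Your overall strategy---case analysis on the relation between $a$ and $\ell_1$, explicit construction of a partition $\mathcal{P}$, then checking that $\mathcal{P}^{g^n}=\mathcal{P}$ forces $\ell_1\mid n$---is exactly the paper's approach. However, your plan contains a genuine misconception that would break several of your constructions: you write that ``$s=1$ means $g$ consists of a single $\ell_1$-cycle together with fixed points''. This is false. The condition $s=1$ says only that $|g|=\ell_1$; the remaining cycles $\sigma_2,\ldots,\sigma_t$ can have any lengths dividing $\ell_1$, and in general they are nontrivial. So in your $\ell_1<a$ case, when you form $B_0=\{1,f_1,\ldots,f_{a-1}\}$ with ``$a-1$ fixed points'' and argue that $B_0^{g^j}=\{1+j,f_1,\ldots,f_{a-1}\}$, this fails: the $f_i$ may move under $g$, and $B_0^{g^j}$ need not contain any of them.

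The paper avoids this by arguing only via intersections with $L_1$. In the case $a\geq\ell_1$ it puts $\ell_1-1$ (not one) points of $L_1$ into $A_1$, so that only $A_1$ and $A_2$ meet $L_1$ at all, with intersection sizes $\ell_1-1$ and $1$; then $A_1^{g^n}\in\wp$ forces $A_1^{g^n}\in\{A_1,A_2\}$ by counting, and the size mismatch (unless $\ell_1=2$) rules out $A_2$. For $a<\ell_1$ the paper writes $\ell_1=aq+r$ and treats $r\geq1$, then $r=0$ with $q<b$, then $r=0$ with $q=b$ (the full $ab$-cycle), each time engineering a block whose intersection with $L_1$ has a \emph{unique} size among the blocks of $\wp$. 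Your instinct to ``perturb one block to break the periodicity'' in the $\ell_1=ab$ case is exactly right and is what the paper does, but the details (e.g.\ $A_1=\{2,\ldots,a-1,2a-1,2a\}$ when $a>2$, and a separate argument for $a=2$) require the careful checking you anticipated. Once you drop the fixed-point assumption and track only $|A_i\cap L_1|$, your plan becomes the paper's proof.
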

\begin{proof}
Note that $s=1$ implies that $\ell_i$ divides $\ell_1$, for every $i$,
and that $|g|=\ell_1$. If $\ell_1$ is a prime then $g$ has a regular cycle on
 $(a,b)$-uniform partitions since this action of $\Sym(ab)$ is faithful. 
Assume now that $\ell_1$ is not prime, so in particular $\ell_1\geq4$. 
We consider various cases, and for each we construct
a $g$-cycle of length $\ell_1$ on $(a,b)$-uniform partitions. 

Suppose first that $a\geq \ell_1$. Let $X_1$ be the first $\ell_1-1$ 
points of $L_1$ and $Y_1$ be the first $(a-\ell_1+1)$ points from 
$\{1,\ldots,ab\}\setminus L_1$. Set $A_1=X_1\cup Y_1$. Then $|A_1|=a$
and we extend $A_1$ to an $(a,b)$-uniform partition $\wp=\{A_1,\ldots,A_b\}$ 
of $\{1,\ldots,ab\}$ (in any way). Relabelling the index set $\{1,\ldots,b\}$ 
if necessary, we may assume that the last point $\ell_1$ of the support of 
$\sigma_1$ lies in $A_2$. Suppose that $\wp^{g^n}=\wp$, for some integer $n$.  
Then $A_1^{g^n}\in \wp$. Since the only elements of $\wp$ containing points 
from $L_1$ are $A_1$ and $A_2$, we obtain that either $A_1^{g^n}=A_1$ or 
$A_1^{g^n}=A_2$. In the first case, we must have $X_1^{g^n}=X_1^{\sigma_1^{n}}=X_1$. 
However, this happens only when $\ell_1$ divides $n$. In particular, $\wp$ is on a 
cycle of length $|g|$ of $g$. In the second case, since $A_2$ contains only one point 
from $L_2$, we have $\ell_1=2$, contradicting the fact that $\ell_1\geq4$.
 
We now deal with the case $a<\ell_1$. Write $\ell_1=aq+r$, with $q\geq 1$ 
and $0\leq r\leq a-1$. We consider the cases $r\geq 1$ and $r=0$ separately.  
Suppose first that $r\geq 1$, so that $|\wp|=b\geq q+1$. 
Consider the sets 
$$
A_i=\{(i-1)a+1,(i-1)a+2,\ldots,ia\}
$$ 
for $i\in \{1,\ldots,q+1\}$. Each of these sets has size $a$, and 
each of $A_1,\ldots,A_q$ is contained in $L_1$, while $A_{q+1}$ 
contains only $r<a$ points from $L_1$. 
Let $\wp$ be an $(a,b)$-uniform partition containing $A_1,\ldots,A_{q+1}$. 
Suppose that $\wp^{g^n}=\wp$, for some integer $n$. In particular, 
$A_{q+1}^{g^n}\in \wp$. Since $A_{q+1}$ is the only element of $\wp$ 
that has exactly $r$ points from $L_1$, we have $A_{q+1}^{g^n}=A_{q+1}$. 
Thus, as before, $\ell_1$ divides $n$ and the $g$-cycle on $(a,b)$-uniform 
partitions containing $\wp$ has length $|g|$.

  It remains to consider the case $r=0$, that is, $a\mid \ell_1$. 
We split this case according to $q<b$ or $q=b$. Suppose first that 
$q<b$, that is, $\sigma_1$ is not a cycle of length $ab$. Consider 
the sets $A_i=\{(i-1)a+1,(i-1)a+2,\ldots,ia\}$, for each $i\in \{1,
\ldots,q-1\}$, and define 
$$
\hspace{1.5cm} A_q=\{(q-1)a+1,(q-1)a+2,\ldots,qa-1,qa+1\}
$$ 
and 
$$
A_{q+1}=\{qa,qa+2,qa+3,\ldots,(q+1)a\}.
$$ 
Each of these sets has size $a$, the sets $A_1,\ldots,A_{q-1}$ are 
contained in $L_1$, and  $A_q$, $A_{q+1}$ contain $a-1$, $1$ points of $L_1$, 
respectively. Let $\wp$ be an $(a,b)$-uniform partition containing 
$A_1,\ldots,A_{q+1}$. Suppose that $\wp^{g^n}=\wp$, for some integer 
$n$. Then $A_{q+1}^{g^n}\in \wp$ and so either $A_{q+1}^{g^{n}}=A_{q+1}$ 
or $A_{q+1}^{g^n}=A_{q}$. In the first case, $(qa)^{g^n}=qa$ and $\ell_1$ 
divides $n$, so $\wp$ lies in a regular cycle for $g$. In the 
second case, we must have $a-1=1$ because $A_q$ contains $a-1$ 
points from $L_1$. Moreover, $A_q^{g^{n}}=A_{q+1}$ and hence
$g^n$ (and also $\sigma_1^n$) must interchange the points $qa-1=2q-1$ and $qa=2q$
(of $A_q\cap L_1$ and $A_{q+1}\cap L_1$). 
Now $\sigma_1=(1,2,\dots,2q)$, and the only way $\sigma_1^n$ can interchange 
the consecutive points $2q-1$ and $2q$ is if $q=1$ and $n$ is odd, but this implies that $\ell_1=aq=2$,
which is a contradiction.

It remains to consider the case $r=0$ and $q=b$, that is, $g=\sigma_1$ 
is the cycle $(1,2,\ldots,ab)$ of length $ab$. Suppose that $a>2$. 
Consider the $(a,b)$-uniform partition $\wp$ consisting of 
$A_1=\{2,3,\ldots,a-1,2a-1,2a\}$, $A_2=\{1,a,a+1,\ldots,2a-2\}$,  
and $A_i=\{(i-1)a+1,(i-1)a+2,\ldots,ia\}$, for $i\in \{3,\ldots,b\}$. 
Suppose that $\wp^{g^n}=\wp$. Then $A_1^{g^n}\in \wp$. Since each $A_i$ for 
$i\geq 3$ contains $a$ consecutive integers while $A_1$ does not, either 
$A_1^{g^n}=A_1$ or $A_1^{g^n}=A_2$. Suppose that the latter holds. Again 
looking at the size of the largest subsets 
of consecutive integers we see that this is only possible when $a-1=2$.  
We also require $A_2^{g^{n}}=A_1$, and so $g^n$ interchanges $A_1=\{2,5,6\}$ and $A_2=\{1,3,4\}$,
which is not possible for any $n, b$.  Thus $A_1^{g^n}=A_1$ and so $g^n$ fixes 
pointwise every element of $A_1$. This implies that $ab$ divides $n$, and $\wp$ lies in a regular $g$-cycle.
Finally suppose that $a=2$. Recall that $ab\neq 4$ and hence $b\geq 3$. 
We consider the  partition 
$$
\wp=\{ A_1=\{1,3\}, A_2=\{2,4\},\{5,6\},\ldots,\{2b-1,2b\}\}.
$$
Suppose that $\wp^{g^n}=\wp$, for some integer $n$. Then $A_1^{g^n}\in \wp$. 
A direct computation with $g$  gives that either $A_1^{g^n}=A_1$ or $A_1^{g^n}
=A_2$. In the first case, we have $1^{g^{n}}=1$ because $ab>4$, and so $ab$ 
divides $n$. In the second case, $A_2^{g^n}=A_1$, contradicting $ab>4$.
Thus $ab$ divides $n$ and $\wp$ lies in a regular $g$-cycle.
\end{proof}

\begin{lemma}\label{lem2}
If $ab\ne4$, $s\geq 2$ and $s\leq a\leq \ell-s$, 
then $g$ has a regular cycle on $(a,b)$-uniform partitions.
\end{lemma}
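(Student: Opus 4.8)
The plan is to transpose the ``$k\le\ell-s$'' part of the proof of Theorem~\ref{actiononsets} to uniform partitions. There one takes a $k$-set meeting each of the cycles $\sigma_1,\dots,\sigma_s$ in a short arc; here I would instead build \emph{one block} of the partition out of such arcs, the extra work being to arrange the remaining blocks so that this distinguished block is recognised by every power of $g$ that stabilises the partition.

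Concretely, using $s\le a$ and $a\le\ell-s=\sum_{i=1}^s(\ell_i-1)$ (together with $\ell_i\ge2$ for $i\le s$, which holds by the minimality in the choice of $s$), I would pick integers $x_1,\dots,x_s$ with $1\le x_i\le\ell_i-1$ and $\sum_{i=1}^s x_i=a$ (possible since these constraints realise exactly the integer interval $[s,\ell-s]$); let $X_i$ be the first $x_i$ points of $L_i$ and put $A_1=\bigcup_{i=1}^s X_i$, a set of size $a$. I would then complete $A_1$ to an $(a,b)$-uniform partition $\wp=\{A_1,A_2,\dots,A_b\}$ in which $A_1$ is the only block meeting all of $L_1,\dots,L_s$, equivalently each $A_j$ with $j\ge2$ avoids some $L_i$ with $i\le s$. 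This is where the ordering $\ell_1\le\dots\le\ell_s$ and the hypotheses $ab\ne4$, $(a,b)\ne(2,2)$ enter: the leftover points $L_i\setminus A_1$ (each nonempty since $x_i<\ell_i$), together with the points outside $\bigcup_{i\le s}L_i$, are spread among $A_2,\dots,A_b$ so that no later block picks up points from every $L_i$; the handful of configurations where this is tight, for instance $\bigcup_{i\le s}L_i=\{1,\dots,ab\}$ with $s=2$, or some $\ell_i$ very small, I would treat as separate small cases (in the worst of these one instead pins down a block that lies inside a single $L_i$ and is a proper arc there).

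Granting such a $\wp$, the conclusion follows as in Theorem~\ref{actiononsets}: if $\wp^{g^n}=\wp$ then $A_1^{g^n}\in\wp$, and since $A_1^{g^n}\cap L_i=(A_1\cap L_i)^{\sigma_i^n}\ne\emptyset$ for every $i\le s$, the block $A_1^{g^n}$ meets all of $L_1,\dots,L_s$, which forces $A_1^{g^n}=A_1$ and hence $X_i^{\sigma_i^n}=X_i$ for every $i\le s$. As $X_i$ is a proper nonempty arc of the cycle $\sigma_i$, no nonzero rotation fixes it, so $\ell_i\mid n$ for each $i\le s$, and therefore $|g|=\lcm\{\ell_i:i\le s\}$ divides $n$; thus the $g$-orbit of $\wp$ on $(a,b)$-uniform partitions has length $|g|$. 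The one genuinely delicate point, and the reason the argument is ``more technical'' than that of Theorem~\ref{actiononsets}, is the construction of $\wp$ with $A_1$ distinguished uniformly in $a$, $b$ and the $\ell_i$; once that is in hand, the remainder is a routine transcription of the $k$-set argument.
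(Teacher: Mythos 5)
Your construction and the easy half of the argument coincide with the paper's: take $X_i$ to be the first $x_i$ points of $L_i$ with $1\le x_i\le \ell_i-1$ and $\sum_i x_i=a$, set $A_1=\cup_i X_i$, and observe that $A_1^{g^n}=A_1$ forces $\ell_i\mid n$ for all $i\le s$, hence $|g|\mid n$. The genuine gap is the step you defer: the claim that $\wp$ can be completed so that $A_1$ is the \emph{only} block meeting all of $L_1,\ldots,L_s$ is false in general, and the configurations where it fails are not ``a handful of small cases'' but infinite families that carry most of the difficulty of the lemma. For instance, take $s=2$, $\ell_1=2$ (forced to have $x_1=1$, since $X_1$ must be a proper arc of $L_1$) and $ab-\ell<a-1$, e.g.\ $g$ of cycle type $(2,\ell_2)$ with $\ell_2$ odd and $ab=\ell_2+2$: whichever block contains the second point of $L_1$ has fewer than $a-1$ points available outside $L_1\cup L_2$, so it must meet $L_2$ as well, and no completion of $A_1$ has the uniqueness property you assume. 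Your fallback remark (``pin down a block lying inside a single $L_i$ that is a proper arc there'') does not close this: forcing such a block to be fixed only yields $\ell_i\mid n$ for that one $i$, it does not recover divisibility by the other cycle lengths (here by $\ell_1=2$), and you have not shown that the image of that block under $g^n$ cannot be a different block --- which is exactly the kind of statement in question.

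By contrast, the paper's proof invests precisely in this situation: it first refines the choice of $(x_1,\ldots,x_s)$ so that either $(x_1,\ldots,x_s)=(1,\ldots,1)$, or $\ell_1=2$, or $\ell_1\ne 2x_1$ (a choice absent from your proposal but used critically), then completes the partition by a specific greedy rule, and when $A_1^{g^n}=A_j$ with $j\ge2$ it shows $j=2$, that $\ell_i=2x_i$ for $i\le s-1$, and hence, via the refined choice, that $\ell_1=s=2$; this residual case is then killed by a further argument pinning down $n\equiv a-1\pmod{\ell_2}$ and examining the block $A_3$ (split according to $3a\le \ell_2+2$ or not). Without an argument of this kind --- or a genuinely different device for the tight configurations, such as distributing the tracking of the cycles over several blocks as in Lemma~\ref{lem4} --- your proposal proves the lemma only when the uniqueness of $A_1$ can in fact be arranged, and so it does not yet establish the statement.
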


\begin{proof}
Since $s\geq2$ and $\ell_1\leq\ell_2\leq\dots\leq \ell_s$, 
we must have $\ell_1<\ell_2$ by the minimality of $s$.
For each $i\in \{1,\ldots,s\}$, let $X_i$ consist of the first $x_i$ 
consecutive points from the support of $\sigma_i$, where $1\leq x_i
\leq \ell_i-1$ and $a=\sum_{i=1}^s x_i$ (note that this is possible 
because of the restrictions on $a$). Suppose that $(x_1,\ldots,x_s)
\neq (1,\ldots,1)$ and $\ell_1\neq 2$. If $\ell_1=2x_1$  then 
either $(x_1,\ldots,x_s)=(\ell_1/2,1,\ldots,1)$ or there exists 
$i\in \{2,\ldots,s\}$ with $x_i>1$. Since $\ell_1\neq 2$ and $\ell_i>
\ell_1$ for all $i\geq 2$, in the first case we may remove a point from 
$X_1$ and add another point of $L_2$ to $X_2$, while in the second case 
we may add another point of $L_1$ to $X_1$ and remove a point from $X_i$. 
Proceeding in this way we may obtain $X_1,\ldots,X_s$ such that either 
$(x_1,\ldots,x_s)=(1,\ldots,1)$, or $\ell_1=2$, or $\ell_1\neq 2x_1$.

Write $A_1=\cup_{i=1}^sX_i$.
Now we describe how to complete $A_1$ to an $(a,b)$-uniform partition 
$\wp=\{A_1,A_2,\ldots,A_{b}\}$. We construct  $A_2,\ldots,A_{b}$ 
iteratively by induction: let $A_{b}$ consist of the $a$ largest 
points in $\{1,\ldots,ab\}\setminus A_1$, and if $A_j,\ldots,A_b$ have been constructed,
then for $A_{j-1}$ take the $a$ largest points in the set 
$$
\{1,\ldots,ab\}\setminus(A_1\cup A_j\cup A_{j+1}\cup\cdots \cup A_b).
$$ 
Suppose that $\wp^{g^n}=\wp$, for some integer $n$. Then $A_1^{g^n}\in \wp$. 
If $A_1^{g^n}=A_1$, then by our choice of $A_1$ we have $X_i^{g^n}=X_i$, 
and hence $\ell_i$ divides $n$, for every $i\in \{1,\ldots,s\}$. 
Thus $|g|$ divides $n$ and $\wp$ is in a regular $g$-cycle. Suppose then 
that $A_1^{g^n}=A_j$, for some $j\in \{2,\ldots,b\}$. We search for a 
contradiction.  This means that $A_j$ contains points from  $L_i$, for 
each $i\in \{1,\ldots,s\}$. However, by the way that we have constructed 
the partition $\wp$ we must have  $A_x\subseteq L_1$ for each $x\in 
\{2,\ldots,j-1\}$. We show that this forces $j=2$. Suppose that $j>2$. 
Then $A_2\subseteq L_1$ and hence $A_2^{g^n}$ is an element of $\wp$ 
contained in $L_1$. Thus $A_2^{g^n}=A_x$ for some $x\in \{2,\ldots,j-1\}$. 
The condition `$A_2,\dots,A_x$ contained in $L_1$' implies that
$A_x=\{x_1+(x-2)a+1,\ldots,x_1+(x-2)a+a\}$.
Since $g$ and $\sigma_1$ induce the same action on $L_1$, we see that
$A_2^{g^n}=A_2^{\sigma_1^n}=A_x$ if and only if $n\equiv (x-2)a
\pmod{\ell_1}$. This in turn imposes severe restrictions on $X_1$. 
In fact, $1^{g^n}=1^{\sigma_1^n}=1^{\sigma_1^{(x-2)a}}=(x-2)a+1\in 
A_{x-1}$ and as $1\in A_1$, we get $A_j=A_1^{g^n}=A_{x-1}$, a 
contradiction. Thus $j=2$. 
Moreover, from the way we have chosen $A_2$ we must have that 
$A_2$ contains $L_i\setminus X_i$, for each 
$i\in \{1,\ldots,s-1\}$, and at least $|X_s|$ points from $L_s\setminus 
X_s$. Furthermore, $A_1^{g^n}=A_2$ 
implies that $x_i=\ell_i-x_i$ for each $i\in \{1,\ldots,s-1\}$. 
Thus $\ell_i=2x_i$ for each $i\in\{1,\ldots,s-1\}$. Now our choice of $A_1$ 
comes into play, and shows that either $\ell_1=2$ or $(x_1,\ldots,x_s)=(1,\ldots,1)$. 
If $\ell_1=2$, the minimality of $s$ and the fact that $\ell_i=2x_i$ for all 
$i\leq s-1$ implies that $s=2$ (otherwise we can omit $\sigma_1$ in 
computing $\lcm\{|\sigma_i|\mid i\in \{1,\ldots,s\}\}$). Similarly, 
if $(x_1,\ldots,x_s)=(1,\ldots,1)$ then $\ell_i=2$ for all $i\leq s-1$; and
since $\ell_2>\ell_1$ we must again have $s=2$. Thus 
we have $\ell_1=s=2$.
In particular, $|g|=2\ell_2$, with $\ell_2$ odd. Now $A_1$ and $A_2$ 
have both one point in $L_1$ and $a-1$ points in $L_2$. Moreover, 
$\sigma_1=(1,2)$, $\sigma_2=(3,4,\ldots,\ell_2+2)$, $A_1=\{1,3,4,
\ldots,a+1\}$ and $A_2=\{2,a+2,a+3,\ldots,2a\}$. Now that we have 
the permutations $\sigma_1$ and $\sigma_2$ in our hands, with a direct 
computation we see that if $A_1^{g^n}=A_2$, then $n$ is odd and $n\equiv 
a-1\pmod {\ell_2}$. Since $n\leq |g|=2\ell_2$ this implies that either 
$n=a-1$ and $a-1$ is odd, or $n= a-1+\ell_2$ and $a-1$ is even. Observe, 
that since $\ell_2$ is odd, we cannot have $2a=\ell_2+2$. So there exist 
elements of $L_2$ not in $A_1$ or in $A_2$, and $b>2$. Now we look at $A_3$. 
First assume that $3a\leq \ell_2+2$, so that $A_3=\{2a+1,\ldots,3a\}$. We have 
$A_3^{g^n}\in \wp$. However, as $n\equiv a-1\pmod {\ell_2}$, we get 
that $(2a+1)^{g^n}=(2a+1)^{\sigma_2^{n}}=3a\in A_3$. Thus $A_3^{g^n}=A_3$, 
which is clearly a contradiction. Thus $3a>\ell_2+2$, so that $A_3\cap L_2=
\{2a+1,\ldots,\ell_2+2\}$ is properly contained in $A_3$. Since $A_3^{g^{n}}
\in \wp$, we must have $A_3^{g^n}=A_3$ (since the sets $A_1$ and $A_2$ contain 
only points from the first two cycles $\sigma_1$ and $\sigma_2$ of $g$). Since 
$A_3$ contains consecutive points of $L_2$, this implies that $\ell_2$ divides 
$n$. However, $n=a-1$ or $n=a-1+\ell_2$, neither of which is divisible by 
$\ell_2$ as $a-1<\ell_2$. Thus we have obtained a contradiction to the fact that $A_1^{g^n}\neq A_1$.
\end{proof}

\begin{lemma}\label{lem3}
If $ab\ne4$, $s\geq 2$ and $a> \ell-s$, 
then $g$ has a regular cycle on $(a,b)$-uniform partitions.
\end{lemma}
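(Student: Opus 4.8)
The plan is to reuse the construction paradigm of Lemmas~\ref{lem1} and~\ref{lem2}: build an explicit $(a,b)$-uniform partition $\wp$ containing a distinguished block $A_1$ whose $g$-image must be $A_1$ itself (or, after ruling out a handful of small cases, cannot be any other block), which then forces $|g|$ to divide any $n$ with $\wp^{g^n}=\wp$. The hypothesis $a>\ell-s$ is exactly the regime not covered by Lemma~\ref{lem2}, and it is the analogue of the case $k>\ell-s$ in the proof of Theorem~\ref{actiononsets}; so I would follow that template. The key point is that $a$ is now large enough that we cannot fit $A_1$ inside $\bigcup_i L_i$ using at most $\ell_i-1$ points of each $L_i$; we will have room left over and must spill into points outside $L_1\cup\dots\cup L_s$ (and possibly into cycles $\sigma_{s+1},\dots,\sigma_t$ or fixed points of $g$).

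**Main construction.** For each $i\in\{1,\dots,s\}$ let $X_i$ be the first $\ell_i-1$ consecutive points of $L_i$, so $|\bigcup_{i=1}^s X_i|=\ell-s$. Since $a>\ell-s$ and the total number of points is $ab$ with $b\ge 2$, I would check that there remain at least $a-(\ell-s)$ points outside $L_1\cup\dots\cup L_s$ to adjoin; this requires $ab-\ell\ge a-(\ell-s)$, i.e.\ $a(b-1)\ge s$, which holds since $a\ge s$ (indeed $a\ge 2$ and we may assume $s\le a$, as $s>a$ forces a separate short argument) and $b\ge 2$. Call this extra set $Y$, chosen to be disjoint from the supports of $\sigma_1,\dots,\sigma_s$, and set $A_1=(\bigcup_i X_i)\cup Y$ with $|A_1|=a$. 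Then complete to an $(a,b)$-uniform partition $\wp$ by the same greedy rule as in Lemma~\ref{lem2} (fill $A_b,A_{b-1},\dots$ with the largest remaining points), so that each of $A_2,\dots,A_{j}$ for small $j$ is either contained in a single $L_i$ or disjoint from all the $L_i$, $i\le s$.

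**Forcing the regular cycle.** Suppose $\wp^{g^n}=\wp$. Then $A_1^{g^n}\in\wp$. The block $A_1$ is distinguished because it meets $L_i$ in exactly $\ell_i-1\ge 1$ points for every $i\le s$ simultaneously, whereas by construction of $\wp$ every other block meets at most one of the $L_i$ in a nonempty proper subset (or none at all) — here I would spell out, as in Lemma~\ref{lem2}, that a block $A_x\subseteq L_1$ cannot be $A_1^{g^n}$, and that if $A_1^{g^n}=A_j$ meets all $L_i$ then a short analysis of the greedy construction forces $j$ small and then forces $\ell_i-x_i=\ell_i-1$, i.e.\ $x_i=1$, contradicting $x_i=\ell_i-1\ge 1$ unless some $\ell_i=2$. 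So I expect the surviving obstructions to be exactly the small cases $\ell_1=2$ (and then, by minimality of $s$, $s=2$), which I would dispatch by an explicit computation analogous to the end of Lemma~\ref{lem2}'s proof, using the freedom in choosing $Y$ (and using $ab\neq 4$) to avoid the bad configuration. Once $A_1^{g^n}=A_1$ is established, the choice of $A_1$ as consisting of \emph{consecutive} initial segments of each $L_i$ means $X_i^{g^n}=X_i^{\sigma_i^n}=X_i$ forces $\ell_i\mid n$ for all $i\le s$, hence $|g|=\lcm\{\ell_1,\dots,\ell_s\}$ divides $n$, so $\wp$ lies in a regular $g$-cycle.

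**Expected main obstacle.** The routine part is the counting that guarantees enough room for $Y$; the delicate part, as in Lemma~\ref{lem2}, is controlling the case $A_1^{g^n}=A_j$ with $j>1$, where one must argue from the greedy structure of $\wp$ that $j$ must be $2$ and then extract the equations $x_i=\ell_i-x_i$ — but here $x_i=\ell_i-1$, so this immediately gives $\ell_i=2$, collapsing the analysis to the single residual case $\ell_1=\ell_2=2$ (impossible: $\lcm$ would be $2$, a prime, handled by faithfulness) or a near-variant requiring a direct check with $ab\neq 4$. I anticipate that, unlike Lemma~\ref{lem2}, very few sub-cases survive precisely because $x_i=\ell_i-1$ is so rigid, so the proof should be shorter than that of Lemma~\ref{lem2} once the room-counting is in place.
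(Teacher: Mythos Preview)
Your approach is essentially the paper's: take $A_1$ to be the union of the first $\ell_i-1$ points of each $L_i$ together with $a-(\ell-s)$ points chosen outside $L_1\cup\cdots\cup L_s$, complete greedily, and argue that $A_1^{g^n}=A_1$ is forced. Two places can be tightened. For the room count you want $ab-\ell\ge a-(\ell-s)$, i.e.\ $a\le ab-s$; you reach for ``we may assume $s\le a$'', but this is automatic: each $\ell_i\ge 2$ gives $\ell\ge 2s$, so if $a>ab-s$ then adding $a>\ell-s$ yields $2a>ab+\ell-2s\ge ab$, a contradiction. No side case ``$s>a$'' is needed.

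For the main step you follow the Lemma~\ref{lem2} template too closely and thereby anticipate complications that do not occur. The key difference here is that \emph{only one} point of each $L_i$ lies outside $A_1$; there are no blocks $A_x\subseteq L_1$ at all. The paper exploits this by filling $A_2,A_3,\ldots$ with the \emph{smallest} remaining points, so that the $s$ leftover points (one from each $L_i$) all land in $A_2$ and every $A_j$ with $j\ge 3$ is disjoint from $L_1\cup\cdots\cup L_s$. Then $A_1^{g^n}\ne A_1$ forces $A_1^{g^n}=A_2$, and comparing $|A_1\cap L_i|=\ell_i-1$ with $|A_2\cap L_i|=1$ gives $\ell_i=2$ for every $i\le s$; since $s\ge 2$ this contradicts the minimality of $s$ outright. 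There is no residual ``$\ell_1=2,\ s=2$'' case to dispatch by hand, and the hypothesis $ab\ne 4$ is not used in this lemma.
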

\begin{proof}
If also $a>ab-s$, then 
$$
ab\geq 2a>(\ell-s)+(ab-s)=ab+\ell-2s\geq ab,
$$
since $\ell=\sum_{i=1}^s\ell_i\geq 2s$, which is a contradiction.
%(for the last inequality observe that $\ell_i\geq 2$ for each $i\in \{1,\ldots,s\}$). 
Thus  $a\leq ab-s$. For each $i\in \{1,\ldots,s\}$, let $X_i$ consist of the 
first $\ell_i-1$ consecutive points from the support of $\sigma_i$
(so $\cup_{i=1}^sX_i$ has size $\ell-s<a$). Then take for $X$ the first 
$a-(\ell-s)$ points from 
$$
\{1,\ldots,ab\}\setminus(L_1\cup\cdots \cup L_s),
$$
observing that this is possible because $ab-\ell\geq a+s-\ell$. 
Set $A_1=(\cup_{i=1}^sX_i)\cup X$.
Now we complete $A_1$ to an $(a,b)$-uniform partition $\wp=\{A_1,A_2,
\ldots,A_{b}\}$. We construct  $A_2,\ldots,A_{b}$ iteratively by induction. 
Let $A_2$ consist of the $a$ smallest points of $\{1,\ldots,ab\}\setminus A_1$, 
and if $A_2,\ldots,A_j$ have been constructed, for $A_{j+1}$ take 
the $a$ smallest points in the set 
$$
\{1,\ldots,ab\}\setminus(A_1\cup A_2\cup\cdots \cup A_j).
$$ 
Suppose that $\wp^{g^n}=\wp$, for some $n$. Then $A_1^{g^n}\in \wp$. 
If $A_1^{g^n}=A_1$, then by our choice of $A_1$ we have that $X_i^{g^n}=X_i$ 
and hence $\ell_i$ divides $n$, for every $i\in \{1,\ldots,s\}$. So $\wp$ is in 
a regular $g$-cycle. If $A_1^{g^n}\neq A_1$, then our choice of $\wp$ gives 
$A_1^{g^n}=A_2$, since $A_2$ is the only part other than $A_1$ containing a 
point of $\sigma_1$. Now for every $i\in \{1,\ldots,s\}$, we have $(A_1\cap 
L_i)^{g^n}=A_2\cap L_i$, and hence $\ell_i-1=1$. This gives $\ell_i=2$, for every 
$i$, which contradicts the minimality of $s$ (recall that we are dealing with the case $s\geq 2$).
\end{proof}

\begin{lemma}\label{lem4}
If $ab\ne4$, $s\geq 2$ and $a< s$, 
then $g$ has a regular cycle on $(a,b)$-uniform partitions.
\end{lemma}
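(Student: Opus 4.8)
The plan is to construct, as in the proofs of Lemmas~\ref{lem1}--\ref{lem3}, an explicit $(a,b)$-uniform partition $\wp$ whose $g$-orbit has length $|g|$; the new feature of this case is that the essential cycles $\sigma_1,\dots,\sigma_s$ no longer fit inside a single part of size $a$, so they must be spread over several parts. The engine of the argument is that $g$ fixes each support $L_i$ setwise, so whenever $\wp^{g^n}=\wp$ for some integer $n$, the element $g^n$ permutes the parts of $\wp$ while preserving, for each part $A$, the vector $(|A\cap L_1|,\dots,|A\cap L_s|)$; call this the \emph{signature} of $A$. It therefore suffices to arrange that, for every $i\le s$, some part $A$ of $\wp$ has a signature shared by no other part of $\wp$ and meets $L_i$ in exactly one point: such a part is fixed by $g^n$, so $\sigma_i^n$ fixes the unique point of $A\cap L_i$, forcing $\ell_i\mid n$; doing this for all $i\le s$ yields that $|g|=\lcm\{\ell_i\mid i\le s\}$ divides $n$, whence $\wp$ lies in a regular $g$-cycle.

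First I would record the numerology. Since each $\ell_i\ge 2$ we have $\ell=\sum_{i=1}^s\ell_i\ge 2s$, and $\ell\le ab$; as $s>a$ this gives $ab\ge 2s\ge 2a+2$, whence $b\ge 3$. Also $b\ge 2s/a$, and since $s\ge a$ we have $\lceil s/a\rceil<s/a+1\le 2s/a\le b$, so $c:=\lceil s/a\rceil$ satisfies $2\le c\le b-1$. Partition $\{1,\dots,s\}$ into consecutive blocks $I_1,\dots,I_c$ with $|I_j|=a$ for $j<c$ and $|I_c|=s-(c-1)a\in\{1,\dots,a\}$.

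Next comes the construction. For $1\le j\le c-1$ set $A_j=\{\min L_i\mid i\in I_j\}$, a set of size $a$ meeting each $L_i$ with $i\in I_j$ in one point and meeting no other $L_k$. For the last distinguished part take $A_c\supseteq\{\min L_i\mid i\in I_c\}$ and adjoin $a-|I_c|$ further points, chosen outside $L_1\cup\dots\cup L_s$ when $ab>\ell$, and otherwise taken to be the second points $\sigma_i(\min L_i)$ of the shortest essential cycles (using $\ell_1\le\dots\le\ell_s$). Finally complete $\{A_1,\dots,A_c\}$ to a partition $\wp=\{A_1,\dots,A_b\}$ by the canonical rule used in Lemma~\ref{lem3}, letting $A_{c+1},\dots,A_b$ successively consist of the smallest unused points of $\{1,\dots,ab\}$. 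One then checks that for each $j\le c$ the signature of $A_j$ (namely $1$ on the coordinates in $I_j$, together with a short initial block when $j=c$, and $0$ elsewhere) occurs for no other part of $\wp$; this gives $A_j^{g^n}=A_j$, hence $\ell_i\mid n$ for all $i\in I_j$, and ranging over $j$ yields $\ell_i\mid n$ for all $i\le s$, so $|g|$ divides $n$.

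The main obstacle is precisely the verification that the signatures of $A_1,\dots,A_c$ are unrepeated, and this is delicate exactly when $ab=\ell$ (so no points lie outside the essential supports) and many of the $\ell_i$ are small: then the leftover points of $\sigma_1,\dots,\sigma_s$, together with the non-essential cycles $\sigma_{s+1},\dots,\sigma_t$, can be forced into a part whose signature accidentally coincides with that of some $A_j$, and the construction must be perturbed --- just as the subcases $\ell_1=2$ and $a=2$ required separate treatment in Lemmas~\ref{lem1}--\ref{lem2}. What tames this is the minimality of $s$: if $\ell_1=2$ then every $\ell_i$ with $2\le i\le s$ is odd, and in general $s$ distinct primes occur among $\ell_1,\dots,\ell_s$, so only boundedly many of the $\ell_i$ can equal $2$; combined with the finitely many genuinely small configurations ($b=3$, or $a=2$, or a bounded number of short cycles), handled by direct computation in the spirit of the earlier lemmas, this completes the argument.
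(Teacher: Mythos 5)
Your construction is essentially the paper's: you split the essential cycles $\sigma_1,\dots,\sigma_s$ into consecutive blocks of size $a$ (the paper writes $s=aq+r$ with $0\le r<a$), let each distinguished part consist of the smallest point of each cycle in its block, handle the remainder block by adjoining either outside points or second points of the shortest cycles (the paper always adjoins the smallest point of $L_j\setminus A_1$ for $j\le a-r$), complete the partition canonically, and then argue that each distinguished part is the unique part with its pattern of intersections with the $L_i$, so it is fixed by $g^n$ and all the relevant $\ell_i$ divide $n$.

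However, there is a genuine gap: the step you yourself identify as ``the main obstacle'' --- that no other part of $\wp$ can share the intersection pattern (signature) of a distinguished part --- is exactly the content of the lemma, and you do not prove it. Deferring it to ``perturbing the construction'' and ``direct computation'' over the cases $b=3$, $a=2$, $ab=\ell$, or ``a bounded number of short cycles'' is not a valid reduction, since these are infinite families of configurations (with $s$, $b$ and the $\ell_i$ unbounded), not finitely many instances one could check. The paper closes this point with no case analysis at all: it completes $\wp$ by repeatedly taking the \emph{largest} unused points (so the filler parts are runs of consecutive large points and cannot meet $a$ distinct essential cycles in one point each), and it invokes the minimality of $s$ to note that at most one $\ell_i$ can equal $2$ --- necessarily $\ell_1$, in which case all other $\ell_i$ are odd --- which rules out the remaining possible collisions between the distinguished parts and the rest of $\wp$; the part $A_{q+1}$ is then also forced to be fixed because it is the only remaining part meeting the cycles $\sigma_{qa+1},\dots,\sigma_{qa+r}$. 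To complete your argument you would need to carry out this uniqueness verification (or an analogue adapted to your completion by \emph{smallest} points, which changes which accidental coincidences can occur), rather than assert that it can be done.
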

\begin{proof}
As above we define a suitable $(a,b)$-uniform partition 
$\wp=\{A_1,\ldots,A_b\}$ and show that $\wp$  is in a regular $g$-cycle. 
Write $s=aq+r$ with $q\geq1$ and $0\leq r<a$. For $i\in \{1,\ldots,q\}$, we let 
$A_i$ consist of the smallest element of $L_{(i-1)a+j}$, for each $j\in \{1,
\ldots,a\}$. If $r>0$, then we choose $A_{q+1}$ to consist of the smallest 
element of $L_{qa+j}$, for $j\in \{1,\ldots,r\}$, together with the smallest 
element of $L_{j}\setminus A_1$, for $j\in \{1,\ldots,a-r\}$. We define the  
remaining elements of $\wp$ by induction. Let $A_b$ consist of the $a$ largest 
elements which have not been assigned to any $A_i$ thus far. Then 
if $A_j,\ldots,A_b$ have been defined, let $A_{j-1}$ consist of the $a$ 
largest elements in 
$$
\{1,\ldots,ab\}\setminus(A_1\cup\cdots \cup A_q\cup A_j\cup\cdots \cup A_b)
$$
if $r=0$, or in
$$
\{1,\ldots,ab\}\setminus(A_1\cup\cdots \cup A_q\cup A_{q+1}\cup A_j\cup\cdots \cup A_b)
$$
if $r>0$. Suppose that $\wp^{g^n}=\wp$, for some integer $n$. Fix 
$i\in \{1,\ldots,q\}$. Then $A_i^{g^n}\in \wp$. Now $A_i$ contains 
at most one point from each cycle of $g$ and in fact, $A_i$ contains 
a point from  each of $\sigma_{(i-1)a+1},\sigma_{(i-1)a+2}\ldots,
\sigma_{ia}$.  By construction, $A_i$ is the only element of $\wp$ 
with this property (this can be easily seen by distinguishing the case 
$r=0$ and $r>0$, and by noticing that $\ell_i$ can be equal to $2$ only 
for $i=1$ and in this latter case all the other $\ell_i$ are odd). 
Thus $A_i^{g^n}=A_i$ and hence $\ell_{(i-1)a+j}\mid n$, for each $j\in 
\{1,\ldots,a\}$. If $r=0$, then this argument shows that  $\ell_x\mid n$ 
for each $x\in \{1,\ldots,s\}$ and hence $|g|\divides n$. Assume then that 
$r>0$ and consider $A_{q+1}^{g^{n}}$. This set contains a point from the first 
$a-r$ cycles of $g$ and a point from the cycles $\sigma_{qa+1},\sigma_{qa+2},
\ldots,\sigma_{qa+r}$. Again, by the way that $A_{q+2},\ldots,A_{b}$ were defined 
we must have $A_{q+1}^{g^n}=A_{q+1}$. Thus $\ell_{qa+j}\mid n$, for every $j\in \{1,\ldots,r\}$, and again $|g|\mid n$.
\end{proof}

Proposition \ref{uniformpartitions} now follows from Lemmas 
\ref{lem1}, \ref{lem2}, \ref{lem3} and \ref{lem4}.

\subsubsection{Actions on the coset space of a primitive subgroup}

Now assume that $G=\Alt(m)$ or $G=\Sym(m)$ is primitive in its action on $\Omega$ 
and let $H$ be the stabilizer in $G$ of a point of
$\Omega$. The group $H$ can either be intransitive, imprimitive or
primitive in its action on $\{1,\ldots,m\}$. If $H$ is intransitive,
then the maximality of $H$ in $G$ yields that the action of $G$ on
$\Omega$ is the natural action of $G$ on the $k$-subsets of
$\{1,\ldots,m\}$, for some $k$. Theorem~\ref{actiononsets} dealt with this case. 
If $H$ is imprimitive on $\{1,\ldots,m\}$ then the maximality of $H$ yields that 
the action of $G$ on $\Omega$ is the natural action of $G$ on uniform partitions, 
which was dealt with in Proposition \ref{uniformpartitions}.  
We now consider the remaining cases where  $H$
is primitive. We make use of the following result of 
Mar\'oti~\cite[Theorem~$1.1$]{Maroti}, which improves a result of Saxl and the second author~\cite{PrSa}.

\begin{lemma}\label{orders}%{{\cite[Theorem 1.1]{Maroti}}}
Let $H$ be a primitive permutation group of degree $m$. Then one of
the following holds.
\begin{description}
\item[(i)]$H$ is a subgroup of $\Sym(r)\wr\Sym(s)$ containing
  $\Alt(r)^s$, where the action of $\Sym(r)$ is on $k$-sets
  from $\{1,\ldots,r\}$ and the wreath product has the product action of
  degree $m=\binom{r}{k}^s$;
\item[(ii)]$H=M_{11}$, $M_{12}$, $M_{23}$ or $M_{24}$ in its
 natural $4$-transitive action;
\item[(iii)]$|H|\leq m\cdot \prod_{i=0}^{\lfloor\log_2(m)\rfloor-1}(m-2^i)$.
\end{description}
\end{lemma}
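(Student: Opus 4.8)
\emph{Proof strategy (following Mar\'oti).}\quad This is Mar\'oti's theorem, and the plan is to follow the structure of his argument: a case analysis over the O'Nan--Scott types of $H$, using the classification of finite simple groups. The guiding observation is that the bound in (iii) is attained exactly by the affine group $\AGL_d(2)$ acting on $m=2^d$ points, since $|\AGL_d(2)|=2^d\prod_{i=0}^{d-1}(2^d-2^i)$ equals the right-hand side of (iii) there; so the assertion is that, outside the families (i) and (ii), the group $\AGL_d(2)$ is essentially the largest primitive group of its degree. First I would dispatch the affine primitive groups $V\rtimes H_0\leq\AGL_d(p)$ ($p$ prime): here $|H|\leq p^d\,|\GL_d(p)|=p^d\prod_{i=0}^{d-1}(p^d-p^i)$ with $m=p^d$, and an elementary comparison with the right-hand side of (iii) (which carries $\lfloor d\log_2 p\rfloor\geq d$ positive factors $p^d-2^i$, each at least the corresponding $p^d-p^i$) yields (iii) for all $p$ and $d$, after verifying a short list of small pairs $(p,d)$ directly.

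The core is the almost simple case: for $H$ primitive of degree $m$ with non-abelian simple socle $S$, one shows $|H|$ is bounded by the expression in (iii) unless either $S=\Alt(n)$ and $H$ acts on $k$-subsets of $\{1,\dots,n\}$, which is case (i) with $s=1$, or $H$ is one of $M_{11},M_{12},M_{23},M_{24}$ in its natural $4$-transitive action, which is case (ii). The proof runs through the possibilities for $S$: for $S$ sporadic one reads the orders and the minimal primitive degrees off~\cite{ATLAS} and checks the inequality (the four Mathieu groups in their $4$-transitive actions being precisely where it fails); for $S$ of Lie type one combines the order formulae with the known lower bounds on the degree of a primitive action of an almost simple group with socle $S$, the most delicate sub-case being the classical groups acting on cosets of maximal parabolic subgroups, where $m$ is a Gaussian binomial coefficient that grows only polynomially in the field size for fixed rank, so the bound is nearly tight; and for $S=\Alt(n)$ one invokes the Praeger--Saxl bound from~\cite{PrSa} (of which the present lemma is the refinement) on the order of a primitive group of degree $n$ to force $m$ to be large, which rules everything out unless the point stabiliser $H\cap\Sym(n)$ is intransitive or imprimitive on $\{1,\dots,n\}$ --- that is, unless the action is on $k$-sets (case (i)) or on uniform partitions, and in the latter case a direct estimate shows (iii) still holds.

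Finally the remaining O'Nan--Scott types reduce to these. In the product action type, $H\leq K\wr\Sym(s)$ acts on $m=t^s$ points with $(K,t)$ an affine or almost simple primitive group of degree $t$, so $|H|\leq|K|^s\,s!$; substituting the bounds just obtained for $|K|$ and using an elementary estimate for $s!$ yields (iii), the only surviving exception being that $(K,t)$ is itself of type (i) with a single factor, in which case $H$ is of type (i). The simple-diagonal, compound-diagonal and twisted-wreath types produce groups comfortably below the bound and are handled by similar but easier estimates. I expect the main obstacle to be the constant-chasing in the almost simple Lie-type analysis --- above all the classical subspace actions, where the degree is only polynomial in $q$ so the inequality is tight --- together with the sizeable collection of small-degree base cases that must be checked by hand or by computer.
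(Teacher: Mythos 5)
You should first be aware that the paper does not prove this lemma at all: it is quoted verbatim from Mar\'oti~\cite[Theorem~1.1]{Maroti} (as a strengthening of Praeger--Saxl~\cite{PrSa}), so the ``paper's proof'' is a citation, and what you have written is an attempt to reconstruct Mar\'oti's own argument rather than anything parallel to the text. As a roadmap your sketch is broadly faithful to how such results are proved: the extremal role of $\AGL_d(2)$ is correctly identified (indeed $m\prod_{i=0}^{d-1}(m-2^i)=|\AGL_d(2)|$ for $m=2^d$), and the affine comparison is even cleaner than you suggest --- since $2^i\leq p^i$ each factor $p^d-p^i$ is dominated by $p^d-2^i$, and the $\lfloor d\log_2 p\rfloor-d$ extra factors are all at least $m/2\geq 1$, so no list of small pairs $(p,d)$ needs separate verification. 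The reduction of the product-action type to its component, with the component of type~(i) propagating to type~(i) for $H$, is also the right mechanism (though in the O'Nan--Scott product-action type the component is almost simple or diagonal, not affine; an affine component would make $H$ itself of affine type).

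The genuine gap is that the heart of the theorem --- the almost simple case --- is asserted rather than proved. Saying that for socle $\Alt(n)$, sporadic, or Lie type one ``combines the order formulae with the known lower bounds on the degree'' and that the classical subspace actions are ``nearly tight'' is a description of the difficulty, not an argument that overcomes it: one needs explicit, uniform inequalities of the form $|H|\leq m\prod_{i=0}^{\lfloor\log_2 m\rfloor-1}(m-2^i)$ for every primitive action of every almost simple group outside cases (i) and (ii), and this is exactly where Mar\'oti's paper does real work (via Liebeck-type bounds on orders of almost simple primitive groups, careful treatment of the parabolic actions of classical groups where $m$ is polynomial in $q$, and a nontrivial amount of small-degree checking). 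In particular, nothing in your sketch explains why the failures are \emph{precisely} the four Mathieu groups in their $4$-transitive actions, nor how the non-standard actions of $\Alt(n)$ (point stabilizer primitive on $\{1,\dots,n\}$) are brought under the bound. As a self-contained proof the proposal therefore does not stand; as a justification at the level the paper itself operates, the correct move is simply to cite Mar\'oti, which is what the authors do.
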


We also need the following refined version of Stirling's formula~\cite{Robbins}.

\begin{lemma}\label{stirling}
For every $n\geq 1$, 
$$
\sqrt{2\pi  n}e^{\frac{1}{12n+1}}\left(\frac{n}{e}\right)^n\leq n!\leq \sqrt{2\pi n}
e^{\frac{1}{12n}}\left(\frac{n}{e}\right)^n.
$$
\end{lemma}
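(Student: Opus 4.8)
The plan is to reprove Robbins' bounds by the classical monotone-squeeze argument. Put
\[
d_n=\ln(n!)-\left(n+\tfrac12\right)\ln n+n,
\]
so that, since $n!=e^{d_n}\sqrt{n}\,(n/e)^n$, the asserted inequalities are equivalent to
\[
\frac{1}{12n+1}\le d_n-\tfrac12\ln(2\pi)\le\frac{1}{12n}\qquad(n\ge1).
\]
I would prove this by showing that $a_n:=d_n-\frac{1}{12n}$ is increasing, that $b_n:=d_n-\frac{1}{12n+1}$ is decreasing, that both sequences converge to a common limit $L$, and that $L=\tfrac12\ln(2\pi)$; granting these facts one has $a_n\le L\le b_n$, which is exactly the required pair of bounds.

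The first step is to evaluate the one-step difference. With $t=\frac1{2n+1}$ one has $\frac{n+1}{n}=\frac{1+t}{1-t}$ and $n+\frac12=\frac1{2t}$, so the logarithmic series $\ln\frac{1+t}{1-t}=2\sum_{k\ge0}\frac{t^{2k+1}}{2k+1}$ gives
\[
d_n-d_{n+1}=\left(n+\tfrac12\right)\ln\tfrac{n+1}{n}-1=\sum_{k\ge1}\frac{1}{(2k+1)(2n+1)^{2k}},
\]
a positive quantity. Comparing the right-hand side with the geometric series of ratio $(2n+1)^{-2}$ yields the two-sided estimate
\[
\frac{1}{3(2n+1)^2}<d_n-d_{n+1}<\frac13\cdot\frac{(2n+1)^{-2}}{1-(2n+1)^{-2}}=\frac{1}{12n(n+1)}=\frac{1}{12}\left(\frac1n-\frac1{n+1}\right).
\]

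The upper estimate gives $a_{n+1}-a_n=\frac{1}{12}\bigl(\frac1n-\frac1{n+1}\bigr)-(d_n-d_{n+1})>0$, so $(a_n)$ is increasing. Since $\frac1{12n+1}-\frac1{12n+13}=\frac{12}{(12n+1)(12n+13)}$, the lower estimate gives $b_{n+1}-b_n<0$ provided $\frac{1}{3(2n+1)^2}>\frac{12}{(12n+1)(12n+13)}$; after clearing denominators this is just $24n-23>0$, which holds for every $n\ge1$, so $(b_n)$ is decreasing. As $b_n-a_n=\frac1{12n}-\frac1{12n+1}\to0$ and $a_n<b_n$, both sequences converge to a common limit $L$.

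Finally, to identify $L$ I would use the Wallis product. From $e^{d_n}=n!\,e^n n^{-n-1/2}$ one computes $e^{2d_n-d_{2n}}=\sqrt2\cdot\frac{2^{2n}(n!)^2}{(2n)!\sqrt n}$; letting $n\to\infty$, the left side tends to $e^{L}$ while Wallis' product gives $\frac{2^{2n}(n!)^2}{(2n)!\sqrt n}\to\sqrt\pi$, so $e^{L}=\sqrt{2\pi}$ and $L=\tfrac12\ln(2\pi)$. The whole argument is routine; the only points needing a little care are checking that the leading term $\frac1{3(2n+1)^2}$ of the logarithmic series already dominates $\frac{12}{(12n+1)(12n+13)}$ for all $n\ge1$ (so that no small cases require separate treatment), and the bookkeeping in the Wallis evaluation of $L$.
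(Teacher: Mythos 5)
Your argument is correct, and all the computations check out: the telescoping identity $d_n-d_{n+1}=(n+\tfrac12)\ln\frac{n+1}{n}-1=\sum_{k\ge1}\frac{1}{(2k+1)(2n+1)^{2k}}$, the two-sided geometric estimate, the monotonicity of $a_n=d_n-\frac1{12n}$ and $b_n=d_n-\frac1{12n+1}$ (the inequality $(12n+1)(12n+13)>36(2n+1)^2$ does reduce to $24n-23>0$, valid for all $n\ge1$), and the Wallis-product identification $e^{L}=\sqrt{2\pi}$ are all in order. Note, however, that the paper does not prove this lemma at all: it simply quotes the refined Stirling bounds from Robbins' note \cite{Robbins}. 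What you have written is essentially Robbins' original monotone-squeeze proof, so your proposal supplies a correct, self-contained justification of the statement rather than an alternative to any argument in the paper itself.
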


\begin{lemma}\label{yetanother}Let $m$ and $k$ be positive integers, let 
$p$ be a prime and let $0<\alpha<1$. Write $r=m-kp$. If $0\leq r\leq \alpha 
m$, then $p^k(r/e)^r(k/e)^k(m/e)^{-m}\leq (m/e)^{((-1+\alpha)/2)m}$.
\end{lemma}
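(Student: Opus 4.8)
The claim is a clean inequality in elementary real analysis, so the plan is to take logarithms and reduce everything to a single-variable estimate. Setting $r=m-kp$, the hypothesis gives $0\le r\le\alpha m$, and in particular $kp=m-r\ge(1-\alpha)m$, so $p^k\le p^{kp}\cdots$ — more usefully, since $p\ge 2$ and $k\ge 1$ we have $p^k=(p^{p})^{k/p}$, hmm, let me instead bound $p^k$ directly by noting $k=(m-r)/p\le (m-r)/2$, so $p^k\le 2^{k\log_2 p}$ is not quite the right move either. The cleanest route: observe $p^k = \big(p^{1/p}\big)^{kp} = \big(p^{1/p}\big)^{m-r}$, and since $p^{1/p}\le 3^{1/3}<e^{1/e}$ — actually $x^{1/x}$ is maximized at $x=e$ — we get $p^k\le (m-r)^{?}$... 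Let me not over-optimize in the sketch. The point is that $p^k$ is dominated by a fixed power of $m$, and the genuine competition is between the three ``entropy-type'' factors $(r/e)^r(k/e)^k$ and $(m/e)^{-m}$.

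\textbf{Key steps.} First I would take $-\log$ of the left-hand side divided by the right-hand side and reduce the claim to showing
\[
r\log\frac{e}{r}+k\log\frac{e}{k}+k\log p+m\log m-m \;\le\; \frac{(-1+\alpha)}{2}\,m\,\log\frac{m}{e},
\]
wait — I must be careful with the $(m/e)$ base on the right; I'll write the target as $p^k(r/e)^r(k/e)^k(m/e)^{-m}\le (m/e)^{((\alpha-1)/2)m}$ and rearrange. Using $k=(m-r)/p$ and the substitution $r=\beta m$ with $0\le\beta\le\alpha$, the inequality becomes, after dividing by $m$, a statement of the form $F(\beta,p)\le (\alpha-1)/2$ uniformly over $0\le\beta\le\alpha<1$ and primes $p\ge 2$. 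Monotonicity in $p$ is easy: increasing $p$ shrinks $k$ and hence shrinks $k\log p$ only mildly while shrinking $k\log(e/k)$, so the worst case is $p=2$, and I would verify this by checking $\partial/\partial p\le 0$ of the relevant expression (treating $p$ as a real variable $\ge 2$). With $p=2$ fixed, one is left with a one-variable calculus problem in $\beta\in[0,\alpha]$: show the function attains its maximum at an endpoint (most likely $\beta=\alpha$, corresponding to $r=\alpha m$, the ``largest allowed remainder'' case, or possibly $\beta=0$), and evaluate there. The endpoint evaluation should collapse to something like $(m/e)^{-m/2}\cdot(\text{poly})$ versus $(m/e)^{(\alpha-1)m/2}$, i.e.\ exactly the claimed bound, with the factor $(r/e)^r$ supplying the ``$\alpha$-worth'' of slack.

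\textbf{Main obstacle.} The technical heart is the concavity/endpoint analysis in $\beta$: the function $\beta\mapsto \beta\log(1/\beta)+\tfrac{1-\beta}{2}\log\tfrac{2e}{(1-\beta)m}+\cdots$ mixes terms that are concave in $\beta$ with the linear term $k\log p$, and one must check that no interior critical point beats the endpoints for \emph{every} $m\ge 1$ — the small-$m$ cases (say $m\le$ some explicit bound, and the degenerate cases $r=0$ or $k=0$) may have to be treated by hand, since the asymptotic estimates are loosest there. I would handle $r=0$ separately first (then the claim reads $2^k(k/e)^k(m/e)^{-m}\le(m/e)^{(\alpha-1)m/2}$ with $k=m/2$, which is immediate for $m$ large and checkable directly for small $m$), and then assume $r\ge 1$ so that all logarithms are harmless. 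I also expect to use Lemma~\ref{stirling} nowhere directly here — it is the later application of this lemma (bounding $\binom{m}{k}$) where Stirling enters — so the proof of this lemma should be self-contained real analysis, modulo possibly invoking $x^{1/x}\le e^{1/e}$ and the standard fact that $t\log(e/t)$ is concave and nonnegative on $(0,e]$.
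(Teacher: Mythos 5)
Your general direction---take logarithms, normalise by $m$, and win by comparing exponents with worst case $p=2$ and $r=\alpha m$---is the same as in the printed proof, but as written there is a real gap, and it sits exactly where you defer the work. First, your stated reduction is not correct: substituting $r=\beta m$, $k=(1-\beta)m/p$ and dividing by $m$ does \emph{not} produce an $m$-free inequality $F(\beta,p)\le(\alpha-1)/2$; it produces
\[
\Bigl(\beta+\tfrac{1-\beta}{p}-1\Bigr)\log\tfrac{m}{e}\;+\;\beta\log\beta\;+\;\tfrac{1-\beta}{p}\log(1-\beta)\;\le\;\tfrac{\alpha-1}{2}\,\log\tfrac{m}{e},
\]
so the $\log m$ dependence survives and the two-variable calculus problem you propose (monotonicity in $p$, endpoint analysis in $\beta$, ``no interior critical point for every $m$'') is not actually set up. Second, the analysis you label the ``technical heart'' is the whole content of the lemma and you leave it open; the observation that closes it, and which your sketch is missing, is simply that the two nonlinear terms $\beta\log\beta$ and $\frac{1-\beta}{p}\log(1-\beta)$ are $\le 0$ for $0\le\beta\le\alpha<1$ and may be discarded. (Note also that $\frac{1-\beta}{p}\log\frac{1-\beta}{p}+\frac{1-\beta}{p}\log p=\frac{1-\beta}{p}\log(1-\beta)$: this is the normalised form of the bound $\log k\le\log m-\log p$ used in the paper, and it cancels the factor $p^k$ exactly---the very factor your opening paragraph struggles with via $p^{1/p}\le e^{1/e}$ and the like. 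The other discard is just $\log r\le\log m$.) After discarding, the left side is $(m/e)^{r+k-m}$ and the problem is linear: $\beta+\frac{1-\beta}{p}$ has $\beta$-derivative $1-\frac1p>0$ and is decreasing in $p$, so its maximum over $0\le\beta\le\alpha$, $p\ge2$ is $\frac{1+\alpha}{2}$ at $\beta=\alpha$, $p=2$, giving exponent at most $\frac{\alpha-1}{2}m$. No critical-point analysis, no separate $r=0$ case, and no computer or hand checks are needed, beyond requiring $\log(m/e)>0$, i.e.\ $m\ge3$.

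Your instinct that small $m$ might misbehave is, ironically, justified: for $m=2$, $k=1$, $p=2$, $r=0$ the left side is $e/2$ while the right side is $(e/2)^{1-\alpha}<e/2$, so the statement as printed fails there (and the final step of the printed proof implicitly uses $m/e>1$). This is immaterial for the applications, where $m\ge 47$, but it does mean your plan of ``checking small $m$ directly'' would have surfaced a counterexample rather than completed a proof of the lemma as stated; the substantive fix is the $m\ge3$ restriction, not extra case analysis.
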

\begin{proof}
We have
\begin{eqnarray*}
p^k(r/e)^r(k/e)^k(m/e)^{-m}&=&e^{k\log(p)-r-k+m}r^rk^km^{-m}=e^{k\log(p)-k+kp}r^rk^km^{-m}\\
&=&e^{k(p-1+\log(p))} e^{r\log(r)+k\log(k)-m\log(m)}\\
&\leq& e^{k(p-1)} e^{r\log(r)+k\log(m)-m\log(m)}\\
&=&e^{m-k-r} e^{r\log(r)+k\log(m)-m\log(m)}\\
&\leq&e^{m-k-r}e^{r\log(m)+k\log(m)-m\log(m)}\\
&=&e^{(\log(m)-1)(r+k-m)}=(m/e)^{r+k-m}\\
&=&(m/e)^{\frac{1}{p}\left(r+kp\right)+\frac{p-1}{p}r-m}\leq (m/e)^{\frac{1}{p}m+\frac{\alpha(p-1)}{p}m-m}\\
&\leq& (m/e)^{((\alpha-1)/2)m},
\end{eqnarray*}
where in the first inequality we used $\log(k)\leq \log(m/p)=\log(m)-\log(p)$ and 
in the last inequality we used that the function $p\mapsto 1/p+\alpha(p-1)/p$ has 
a maximum at $p=2$, with value $(\alpha+1)/2$.
\end{proof}

\begin{lemma}\label{newlemma1}Let $A$ and $B$ be finite permutation groups 
on $\Delta$ and $\{1,\ldots,\ell\}$ respectively, such that $A$ 
is not regular on $\Delta$, and consider the product action of
$G=A\wr B$ on $\Omega=\Delta^\ell$. Then 
$\max\{\fpr_\Omega(g)\mid {g\in G\setminus\{1\}}\}=\max\{\fpr_\Delta(x)\mid {x\in A\setminus\{1\}}\}$.
\end{lemma}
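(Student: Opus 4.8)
The plan is to compute the fixed-point set of an arbitrary nonidentity element $g=(x_1,\ldots,x_\ell)\sigma\in G=A\wr B$ in the product action on $\Omega=\Delta^\ell$, and to show that its relative size is maximized by an element supported on a single coordinate. First I would recall how $g$ acts: a point $(\delta_1,\ldots,\delta_\ell)\in\Delta^\ell$ is fixed by $g$ precisely when $\delta_{i\sigma^{-1}}^{x_i}=\delta_i$ for all $i$ (with an appropriate convention for the product-action formula). Grouping the coordinates into the cycles of $\sigma$, say a cycle $(i_1,\ldots,i_t)$ of length $t$, the fixed points on that block of $t$ coordinates are parametrized by a single coordinate $\delta=\delta_{i_1}$ subject to $\delta^{y}=\delta$, where $y$ is the product of the $x_{i_j}$ around the cycle; so the number of choices on that block is $|\Fix_\Delta(y)|$ when the other $t-1$ coordinates are then determined. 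Hence $|\Fix_\Omega(g)|=\prod_{c}|\Fix_\Delta(y_c)|$, a product over the cycles $c$ of $\sigma$, where $y_c\in A$ is the associated ``cycle product'', and $|\Omega|=\prod_c|\Delta|^{t_c}$. Therefore
\[
\fpr_\Omega(g)=\prod_{c}\frac{|\Fix_\Delta(y_c)|}{|\Delta|^{t_c}}
=\prod_c \fpr_\Delta(y_c)\cdot |\Delta|^{-(t_c-1)}.
\]

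From this formula the inequality $\le$ is immediate: each factor with $t_c=1$ contributes $\fpr_\Delta(y_c)$ and each factor with $t_c\ge2$ contributes $\fpr_\Delta(y_c)|\Delta|^{-(t_c-1)}\le 1$ (here we just need $\fpr_\Delta(y_c)\le1$, which is trivial, but in fact since $A$ is not regular on $\Delta$ every $y_c$ — even $y_c=1$ — has $\fpr_\Delta(y_c)\le\max_{x\in A\setminus\{1\}}\fpr_\Delta(x)$ or else $\fpr_\Delta(y_c)|\Delta|^{-1}<1$, whichever is relevant); and at least one cycle product $y_c$ is nontrivial, or else $g$ itself would be trivial. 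More carefully: if some $y_c=1$ on a length-$1$ cycle then that coordinate is genuinely fixed but is a ``do-nothing'' coordinate; since $g\ne1$ there is at least one cycle $c_0$ with $(t_{c_0},y_{c_0})\ne(1,1)$, and bounding its factor by $\max_{x\in A\setminus\{1\}}\fpr_\Delta(x)$ when $t_{c_0}=1$, or by $\fpr_\Delta(y_{c_0})|\Delta|^{-(t_{c_0}-1)}\le|\Delta|^{-1}\le\max_{x\in A\setminus\{1\}}\fpr_\Delta(x)$ when $t_{c_0}\ge2$ (the last step using that $A$ is non-regular, so some nonidentity $x\in A$ fixes a point, giving $\max_x\fpr_\Delta(x)\ge 1/|\Delta|$), while bounding all remaining factors by $1$, yields $\fpr_\Omega(g)\le\max_{x\in A\setminus\{1\}}\fpr_\Delta(x)$.

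For the reverse inequality $\ge$, I would simply exhibit the witnessing element: pick $x\in A\setminus\{1\}$ with $\fpr_\Delta(x)=\max_{x'\in A\setminus\{1\}}\fpr_\Delta(x')$ and set $g=(x,1,\ldots,1)\in G$ (so $\sigma=1$). Then $\Fix_\Omega(g)=\Fix_\Delta(x)\times\Delta^{\ell-1}$, whence $\fpr_\Omega(g)=\fpr_\Delta(x)=\max_{x'\in A\setminus\{1\}}\fpr_\Delta(x')$, and this $g$ is nontrivial since $x\ne1$. Combining the two inequalities gives the claimed equality.

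The main obstacle is really just bookkeeping: getting the product-action formula for $\Fix_\Omega(g)$ right (the cycle-product $y_c$ is only well-defined up to conjugacy in $A$ depending on where one ``cuts'' the cycle, but $|\Fix_\Delta(y_c)|$ is conjugation-invariant so this causes no trouble), and handling the bookkeeping around the hypothesis that $A$ is non-regular — this is exactly what rules out the pathology where $\fpr_\Delta(x)=0$ for all $x\ne1$ yet $g$ still has many fixed points coming purely from do-nothing coordinates, and it also supplies the bound $\max_{x\in A\setminus\{1\}}\fpr_\Delta(x)\ge 1/|\Delta|$ used above. Everything else is a routine verification.
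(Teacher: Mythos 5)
Your proposal is correct and follows essentially the same route as the paper: the same witness $(x,1,\ldots,1)$ gives the inequality $\geq$, and for $\leq$ you use, exactly as the paper does, that the coordinates along a nontrivial cycle of $\sigma$ are determined by a single one, combined with the bound $1/|\Delta|\leq\max_{x\in A\setminus\{1\}}\fpr_\Delta(x)$ supplied by non-regularity. Your exact cycle-product formula $\fpr_\Omega(g)=\prod_c\fpr_\Delta(y_c)\,|\Delta|^{-(t_c-1)}$ is a slightly sharper piece of bookkeeping than the paper's two-case estimate ($g\in A^\ell$ versus $g\notin A^\ell$), but the underlying ideas coincide.
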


\begin{proof}Write $m_\Omega:=\max\{\fpr_\Omega(x)\mid {x\in G\setminus\{1\}}\}$ and 
$m_\Delta:=\max\{\fpr_\Delta(x)\mid {x\in A\setminus\{1\}}\}$. 
Observe that $1/|\Delta|\leq m_\Delta$ because $A$ is not regular.
Let $h\in A\setminus\{1\}$ with $\fpr_\Delta(h)=m_\Delta$. 
Then the permutation $g=(h,1,\ldots,1)\in A^\ell\leq G$ and 
$\fpr_\Omega(g)=\fpr_\Delta(h)=m_\Delta$. Thus $m_\Omega\geq 
m_\Delta$. We now prove the reverse inequality.

Let $g=(h_1,\ldots,h_\ell)\in A^\ell$ with $g\neq 1$. Then $\fpr_\Omega(g)=\fpr_\Delta(h_1)\cdots \fpr_\Delta(h_\ell)\leq m_\Delta$. Next, let $g\in G\setminus A^\ell$ with $g=(h_1,\ldots,h_\ell)\sigma$, for some $h_1,\ldots,h_\ell\in A$ and $\sigma\in B\setminus\{1\}$. Relabelling the index set $\{1,\ldots,\ell\}$ if necessary, we may assume that $(1,\ldots,k)$ is a non-identity cycle of $\sigma$. Let $\omega=(\delta_1,\ldots,\delta_\ell)\in \Omega$. Now, 
\[
\omega^g=(\delta_k^{h_k},\delta_1^{h_1},\cdots,\delta_{k-2}^{h_{k-2}},\delta_{k-1}^{h_{k-1}},\delta_{k+1}',\ldots,\delta_{\ell}')
\]
for some $\delta_{k+1}',\ldots,\delta_{\ell}'\in\Delta$. In particular, if $\omega^g=\omega$, then 
$\delta_1=\delta_k^{h_k},\,\delta_2=\delta_1^{h_1},\ldots,\,\delta_{k}=\delta_{k-1}^{h_{k-1}}$, that is,
\[
\delta_k=\delta_1^{(h_k)^{-1}},\,\delta_{k-1}=\delta_1^{(h_{k-1}h_k)^{-1}},\,\ldots,\,\delta_2=\delta_1^{(h_2\cdots h_{k-1}h_k)^{-1}}.
\]
From this we deduce that $k-1$ coordinates of $\omega$ are uniquely determined  by the first coordinate of $\omega$. Since $k\geq 2$, we obtain  $\fpr_\Omega(g)\leq |\Delta|^{\ell-1}/|\Delta|^\ell=1/|\Delta|\leq m_\Delta$. 
\end{proof}

We now deal with primitive actions of $\Sym(m)$ where the stabilizer is primitive on $\{1,\dots,m\}$.

\begin{proposition}\label{thm2}Let $G$ be a primitive group on $\Omega$
  with socle $\Alt(m)$ such that, for $\omega\in \Omega$, the stabilizer $G_\omega$
  is primitive on $\{1,\dots,m\}$. Then either each element of $G$ has a regular 
cycle on $\Omega$, or $m=6$, $G=\Sym(6)$ and $G_\omega=\PGL_2(5)$.
\end{proposition}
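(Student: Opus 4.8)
The plan is to run the familiar fixed-point-ratio reduction and then feed it into Mar\'oti's bound. By Lemma~\ref{basic} we may assume $g$ has square-free order, so that for each prime $p\mid|g|$ the element $y_p:=g^{|g|/p}$ has order exactly $p$; by Lemma~\ref{lemma:apeman} it suffices to show $\sum_{p\mid|g|}\fpr_\Omega(y_p)<1$. Put $H=G_\omega$, a primitive subgroup of $\Sym(m)$ of degree $m$; since $(G,\Omega)$ is faithful and primitive of degree $|\Omega|>2$, the group $H$ does not contain $\Alt(m)$, so Lemma~\ref{orders} applies. In case~(ii) of Lemma~\ref{orders}, $H$ is one of $M_{11},M_{12},M_{23},M_{24}$ and $m\in\{11,12,23,24\}$, so $(G,\Omega)$ ranges over finitely many explicit permutation groups, which I would dispatch by direct computation exactly as in the proof of Theorem~\ref{thrm:sporadic}. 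The real work is in cases~(i) and~(iii).

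For a single $y=y_p$, say with $a\geq 1$ cycles of length $p$ on $\{1,\dots,m\}$, Lemma~\ref{obvious3} gives
\[
\fpr_\Omega(y)=\frac{|y^G\cap H|}{|y^G|}\;\le\;|y^G\cap H|\cdot\frac{2\,p^{a}\,a!\,(m-ap)!}{m!},
\]
using $|y^G|\ge\tfrac12|y^{\Sym(m)}|=\tfrac{m!}{2p^a a!(m-ap)!}$. The factor $\tfrac{p^a a!(m-ap)!}{m!}$ is precisely what Lemma~\ref{yetanother}, together with Stirling's formula (Lemma~\ref{stirling}), controls: with $r=m-ap$ it is at most $(m/e)^{-a(p-1)}$ up to a polynomial in $m$, so it decays like $(m/e)^{-a}$. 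Thus the game is to show that, whenever $y^G\cap H\neq\emptyset$, the number $a$ of $p$-cycles is large enough to overwhelm the bound on $|y^G\cap H|$. In case~(iii), Lemma~\ref{orders} gives $|H|\le m^{1+\log_2 m}$, so $\log|y^G\cap H|\le\log|H|=O((\log m)^2)$; and since $H$ is primitive and does not contain $\Alt(m)$, every non-identity element of $H$ moves at least of order $\sqrt m$ points (Babai's lower bound on the minimal degree), so $ap\ge\sqrt m$ and hence $a(p-1)\log(m/e)\ge\tfrac12\sqrt m\log(m/e)\gg(\log m)^2$. Therefore $\fpr_\Omega(y_p)\to 0$ faster than any power of $1/m$, while $\omega(|g|)\le\log_2|g|=O(\sqrt{m\log m})$ by Lemma~\ref{mass}; so $\sum_{p\mid|g|}\fpr_\Omega(y_p)<1$ for all $m$ above an explicit bound $m_0$.

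Case~(i) is the main obstacle. There $\Alt(r)^s\le H\le\Sym(r)\wr\Sym(s)$ and the action of $H$ on $\{1,\dots,m\}$, with $m=\binom{r}{k}^s$, is the product action with $\Sym(r)$ on $k$-subsets; now the minimal degree of $H$ can be as small as $\Theta(m/r)$, so Babai's bound is far too weak and the crude estimate $|y^G\cap H|\le|H|$ (with $\log|H|$ as large as $\Theta(\sqrt m\log m)$) fails for elements $y$ of small support. The remedy is to separate cases by the size of the support of $y$: if $y$ has large support the crude bound still suffices, while if $y$ has small support then, by the argument behind Lemma~\ref{newlemma1}, any $h\in y^G\cap H$ must lie in $\Sym(r)^s$ (an element with non-trivial coordinate permutation moves at least $m(1-1/\binom{r}{k})$ points), so its support is a product over the $s$ coordinates and is concentrated in very few of them; counting the possibilities for such $h$ in terms of the conjugacy data of $\Sym(r)\wr\Sym(s)$ shows $|y^G\cap H|$ is bounded by a polynomial in $m$ rather than by $|H|$, and then the estimate of the previous paragraph goes through. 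This handles all but finitely many triples $(r,k,s)$.

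Finally I would treat the finitely many remaining groups $H$ --- the small degrees left open in cases~(i) and~(iii), together with the Mathieu groups of case~(ii) --- by computation, as in Theorem~\ref{thrm:sporadic}: list the subgroups maximal in $\Alt(m)$ or $\Sym(m)$ and primitive of degree $m$, build the coset action of $G$ on $\Omega=G/H$, and test each square-free-order element for a regular cycle. The only failure is $m=6$, $G=\Sym(6)$, $H=\PGL_2(5)\cong\Sym(5)$: here $|\Omega|=6$ and $G$ acts through the exceptional outer automorphism of $\Sym(6)$, which carries a $6$-cycle to a permutation of cycle type $(3,2,1)$, so the $6$-cycles of $G$ have order $6$ but no cycle of length $6$ on $\Omega$. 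Besides this one example every element has a regular cycle, which is the asserted dichotomy. The two delicate points are the product-type case~(i) above and ensuring the finite enumeration produces exactly the $\PGL_2(5)$ example and no spurious one.
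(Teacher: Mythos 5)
Your skeleton is the same as the paper's (square-free reduction via Lemma~\ref{basic}, the criterion of Lemma~\ref{lemma:apeman}, Mar\'oti's trichotomy, the Stirling estimate of $|y^G|$ via Lemmas~\ref{obvious3} and~\ref{stirling}, an asymptotic bound plus a finite computer check, and the identification of $\Sym(6)$ on cosets of $\PGL_2(5)$ as the unique exception), but your handling of case~(i) has a genuine gap. For ``small support'' $y$ you assert that any $h\in y^G\cap H$ lies in the base group $\Sym(r)^s$ (fine: this is the content of the proof of Lemma~\ref{newlemma1}) and then that ``counting the possibilities \ldots shows $|y^G\cap H|$ is bounded by a polynomial in $m$''. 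That count is false in the stated generality: when $s=1$, $k=2$ there is no top group at all and the elements of $H=\Sym(r)$ of point-support $t$ form single cycle types on $2$-sets, giving $|y^G\cap H|$ of order $r^{\Theta(t)}=m^{\Theta(t)}$, super-polynomial as soon as $t$ grows; likewise for $s\geq2$, base-group elements supported in one coordinate give counts $e^{\Theta(t\log r)}$. So your dichotomy ``large support: crude bound $|y^G\cap H|\leq|H|$; small support: polynomial count'' does not close as written: what is really needed is an explicit threshold together with a trade-off between a support-dependent class-intersection count and the support-dependent decay, and you never carry this out. Moreover the clean fix is already in the lemma you cite: Lemma~\ref{newlemma1}, applied to \emph{all} non-trivial elements of $H$ (not just those with non-trivial coordinate permutation), gives $\fpr_{\{1,\dots,m\}}(x)\leq 1-2/r$, i.e.\ support at least $2m/r$, after which the crude bound $|y^G\cap H|\leq |H|\leq (r!)^s s!$ together with Lemma~\ref{yetanother} (with $\alpha=1-2/r$) suffices outright; this is exactly the paper's route, and it reduces case~(i) to $s=2$, $k=1$, $r\leq 12$, i.e.\ $m\leq144$, which is absorbed into the computer check.

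Two further, lesser points. In case~(iii) your replacement of the paper's Guralnick--Magaard input ($\fpr_{\{1,\dots,m\}}(x)\leq 4/7$, so support at least $3m/7$) by Babai's $\sqrt m$ minimal-degree bound is sound asymptotically, but it pushes the crossover from $m\geq47$ to $m$ in the thousands, so the ``finite computation'' you defer to is much heavier than the paper's $m\leq144$ check and you never produce the explicit $m_0$. Also, the inequality $p^a a!\,r!/m!\leq \mathrm{poly}(m)\,(m/e)^{-a(p-1)}$ is not what Lemma~\ref{yetanother} states (its hypothesis is $r\leq\alpha m$ and its conclusion has exponent proportional to $m$); the estimate you want does follow from a direct Stirling computation (with base $m/(2e)$, say, because of the factor $p^a$), so it should be stated and proved separately rather than attributed to that lemma. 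The Mathieu case and your explanation of the exceptional example via the outer automorphism of $\Sym(6)$ are correct and agree with the paper.
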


\begin{proof}
Write $H:=G_\omega$. We use the trichotomy offered in
Lemma~\ref{orders}, and our first strategy is to apply the criterion in Lemma~\ref{lemma:apeman}.

\medskip\noindent
{\it Case: $m\geq47$ and part~(iii) but not part~(i) of Lemma~\ref{orders} holds for $H$.}\quad  
Let $x\in H$ with $p:=|x|$ prime. By~\cite[Corollary~$1$]{GM}, 
\begin{equation}\label{gur}
\mathrm{fpr}_{\{1,\ldots,m\}}(x)\leq\frac{4}{7}.
\end{equation}
Let $k$ be the number of cycles of $x$ of length $p$ in its action on $
\{1,\ldots,m\}$ and write $r=m-pk$. Then by~\eqref{gur}, 
$r\leq 4m/7$. Write $N_m= m\prod_{i=0}^{\lfloor\log_2(m)\rfloor-1}(m-2^i)$.

For a real number $\gamma\geq 1$,
write $c_\gamma=e^{1/(12\gamma+1)}$ and $C_\gamma=e^{1/(12\gamma)}$. Also set $c_0=C_0=1$. 
Then by Lemmas~\ref{obvious3} and~\ref{stirling},

\begin{eqnarray}\nonumber
\mathrm{fpr}_\Omega(x)&=&\frac{|H\cap x^G|}{|x^G|}\leq
\frac{|H|}{|x^G|}\leq \frac{N_m}{|x^G|}\leq
\frac{N_m}{\frac{m!}{2p^kk!r!}}=\frac{2N_mp^kk!r!}{m!}\\\label{newnew}
&\leq&\frac{2N_mp^{k}\sqrt{2\pi k}\sqrt{2\pi r}C_kC_r}{\sqrt{2\pi
    m}c_m}\left(\frac{k}{e}\right)^{k}\left(\frac{r}{e}\right)^r\left(\frac{m}{e}\right)^{-m}.\\\nonumber
\end{eqnarray}
(Observe that the extra factor of $2$ in the denominator of the third
inequality accounts for the case $G=\Alt(m)$.)
It follows from~\eqref{newnew} and
Lemma~\ref{yetanother} (applied with $\alpha=4/7$) that

\begin{eqnarray}\nonumber
\mathrm{fpr}_\Omega(x)&\leq &\frac{2N_m\sqrt{\pi m}\sqrt{8\pi m/7
  }C_{r}C_{k}}{\sqrt{2\pi m}c_m}
\left(\frac{m}{e}\right)^{\frac{-3m}{14}}\label{eqeq555}
\leq 1.2\cdot \sqrt{16\pi
    m/7}N_m \left(\frac{m}{e}\right)^{\frac{-3m}{14}}.\\
\end{eqnarray}
(Observe that in the last
inequality we have $c_{m}\geq 1$ and, for each $\gamma$, we have $C_\gamma\leq C_1=1.09$). Denote the
right hand side of~\eqref{eqeq555} by $\beta_m$ and observe that this is
a function of $m$ only.

Now let $g\in G$ with $|g|$ square-free. Lemma~\ref{numbertheory} gives an
upper bound for $\omega(|g|)$ depending only on $|g|$ and
then Lemma~\ref{mass} gives an upper bound on $|g|$ depending only on
$m$. Call $\alpha_m$ this function of $m$. It follows, with the help
of a computer, that $\alpha_m\beta_m<1$ for every $m\geq 47$. Thus, 
in these cases, the theorem follows from Lemma~\ref{lemma:apeman}.

\medskip\noindent
{\it Case: $m\leq144$.}\quad  
Here we assume that $m\leq 144$ and deal with every primitive group 
$H$ of degree $m$. In particular this completes our analysis of
groups satisfying parts~(ii) and~(iii) of Lemma~\ref{orders}.
We use a computer. For each
possible $H$  and $G=\Alt(m)$ or $\Sym(m)$ we determine the maximum of
$|H\cap x^G|/|x^G|$ as $x$ runs through the non-identity elements of
$H$ of prime order. Once this number is obtained we multiply it by the maximum
$\omega(|g|)$, as $g$ runs through the elements of $G$. In each case the
product of these two numbers is $<1$ unless $m\leq 12$. Now that $m$
is very small we can afford to construct, for each maximal subgroup
$H$ of
$G=\Alt(m)$ or $G=\Sym(m)$, respectively, such that $H$ is primitive 
on $\{1,\ldots,m\}$, the permutation representation of $G$ on the cosets of
  $H$ and test each element of square-free order. In each case the
  theorem is valid ($G=\Sym(6)$ and $H=\PGL_2(5)$ is the only example
  where there exists an element $g$, of order $6$, not having a cycle
  of length $|g|$).

\medskip\noindent
{\it Case: $m>144$ and part~(i) of Lemma~\ref{orders} holds.}\quad  
Here $\soc(H)=\Alt(c)^\ell$, $m=\binom{c}{d}^\ell$, 
for some $c, d, \ell$ with $1\leq d<c/2$ and $d\ell\geq2$, and the 
action of $H$ on $\{1,\ldots,m\}$ is the natural product
action on the set of $\ell$-tuples from the set $\Delta_d$ of  $d$-sets of 
$\{1,\ldots,c\}$. From Lemma~\ref{newlemma1} (applied with $A:=\Sym(c)$, $G:=
A\wr\Sym(\ell)$ and $\Delta=\Delta_d$), we see that 
$\max\{\fpr_{\{1,\ldots,m\}}(x)\mid x\in H\setminus\{1\}\}\leq
\max\{\fpr_{\Delta_d}(x)\mid x\in \Sym(c),x\neq 1\}$.
It is easy to see
that, for a permutation $x\in \Sym(c)$ with $x\neq 1$, we have
$\mathrm{fpr}_{{\Delta_d}}(x)\leq 1-2/c$
(the maximum is actually achieved with $x$ a transposition, $d=1$ and $\Delta_d=\{1,\ldots,c\}$).   In particular, for $x\in H$ with $x\neq 1$, we have
$$
\mathrm{fpr}_{\{1,\ldots,m\}}(x)\leq 1-\frac{2}{c}
$$
and we proceed exactly as
in~\eqref{eqeq555}, $N_m$ (an upper bound for $|H|$) is replaced by 
$c!^\ell \ell!$ and the constant $4/7$ (an upper bound for 
$\mathrm{fpr}_{\{1,\dots,m\}}(x)$) is replaced by
$1-2/c$. Namely, following the computations in~\eqref{newnew} 
and~\eqref{eqeq555} (and applying Lemma~\ref{yetanother} with $\alpha=1-2/c$) we obtain
(as an analogy to \eqref{eqeq555}) for an element $x\in H$ of prime order $p$ with 
$r=m-pk$ fixed points in $\{1,\dots,m\}$ (so $r\leq (1-\frac{2}{c})m$),
\begin{eqnarray*}
\mathrm{fpr}_\Omega(x)&\leq& \frac{2(c!^\ell \ell!)\sqrt{2\pi \ell}\sqrt{2\pi k
  }C_{r}C_{k}}{\sqrt{2\pi m}c_m}
\left(\frac{m}{e}\right)^{\frac{-m}{c}}
\leq \frac{2(c!^\ell \ell!)\sqrt{2\pi m}\sqrt{2\pi m
  }C_{r}C_{k}}{\sqrt{2\pi m}c_m}
\left(\frac{m}{e}\right)^{\frac{-m}{c}}\\
&=&\frac{2\sqrt{2\pi m}(c!^\ell \ell!)C_{r}C_{k}}{c_m}
\left(\frac{m}{e}\right)^{\frac{-m}{c}}\leq 2.4\sqrt{2\pi m}(c!^\ell \ell!)
\left(\frac{m}{e}\right)^{\frac{-m}{c}},
\end{eqnarray*}
where the last inequality follows because $c_m>1$ and $C_\gamma\leq C_1\leq 1.09$.
In this way we obtain an upper bound on
$\fpr_\Omega(x)$ as a function of $\ell$, $c$ and $d$. It is again a
computation, with the help of a computer, to show that this function
times $\alpha_m$ is always less than $1$, except when $\ell=2$, $d=1$ and $c\leq 12$. 
However, for the corresponding  values of $m$, we have $m\leq
144$, which we assume is not the case here. This completes the proof.
\end{proof}

\section{Concluding remarks}

We finish by bringing together the various threads to prove Theorem \ref{thrm:main}.

\begin{proof}[Proof of Theorem~\ref{thrm:main}]
Let $G\leq\Sym(\Omega)$ be a primitive group that contains an element  with no regular cycle. 
By Theorem \ref{thrm:affine}, $G$ is not of affine type and by Theorem \ref{thrm:SD}, 
$G$ is not of Diagonal type. Thus we may assume that $G\leqslant H\wr \Sym(r)$, where 
either $r=1$ and $G=H, \Omega=\Delta$, or
$r\geq2$ and $G$ preserves a product structure 
$\Omega=\Delta^r$. We can choose $r$ maximal so that $H$ is primitive and does not preserve a 
product structure on $\Delta$. (Otherwise, if $H$ preserves $\Delta=\Gamma^k$ 
with $H\leqslant R\wr \Sym(k)$ then $G$ preserves the structure $\Omega=\Gamma^{kr}$ 
and $G\leqslant R\wr \Sym(kr)$.)  Thus we have $G\leqslant H\wr \Sym(r)$, where 
$H$ is primitive of almost simple or of affine or of diagonal type. Since $G$ contains 
an element $g$ with no regular cycles on $\Omega$, Theorem \ref{thrm:PA} implies that 
some element of $H$ has no regular cycle on $\Delta$. It then follows 
from Theorems~\ref{thrm:affine} and~\ref{thrm:SD} that 
$H$ is an almost simple group.  Let $T=\soc(H)$. Then by \cite{LPS1}, $\soc(G)=T^r$. 
Thus we have $T^r\vartriangleleft G\leq H\wr\Sym(r)$ in product action on $\Omega=\Delta^r$,  for some $r\geq 1$,
for a primitive almost simple group $H\leq\Sym(\Delta)$ with socle $T$, such that 
some element of $H$ has no regular cycle.
By Theorems \ref{thrm:sporadic} and \ref{thrm:excep}, $T$ is neither a sporadic simple group
nor an exceptional group of Lie type. If $T$ is a classical simple group then
the conclusion of Theorem~\ref{thrm:main} holds, so we may assume that $T=\Alt(m)$ for some $m\geq 7$.
Thus $H=\Alt(m)$ or $\Sym(m)$.   
By assumption, $(H,\Delta)$ is not the $k$-set action of $\Alt(m)$ or $\Sym(m)$, for any $k$. 
Thus a stabilizer $H_\delta$ (for $\delta\in\Delta$) is transitive on $\{1,\dots,m\}$.
By Proposition~\ref{uniformpartitions}, $H_\delta$ is primitive on $\{1,\dots,m\}$ (since $m\geq5$), and by
Proposition~\ref{thm2}, the only possibility for $H$ is $H=\Sym(6)$, but we have $m\geq 7$. 
This contradiction completes the proof. 
\end{proof}

\thebibliography{10}
\bibitem{onlineAtlas}Online Atlas of Finite Group Representations, \href{http://web.mat.bham.ac.uk/atlas/v2.0/}{http://web.mat.bham.ac.uk/atlas/v2.0/}

\bibitem{AIPS}A.~Azad, M.~A.~Iranmanesh, C.~E.~Praeger, P.~Spiga,
  Abelian coverings of finite general linear groups and an application
  to their non-commuting graphs,
  \textit{J.~Algebr.~Comb. }\textbf{34} (2011),  683--711.

\bibitem{BS}E.~Bach, J.~Shallit, Section~$2.7$ in Algorithmic Number
  Theory, Vol. 1: Efficient Algorithms. Cambridge, MA: MIT Press,
  1996.

\bibitem{magma}W.~Bosma, J.~Cannon, C.~Playoust, The Magma algebra system. I. The user language, \textit{J. Symbolic Comput.} \textbf{24} (1997), 235--265.

\bibitem{cameron}
P.~J.~Cameron,   Permutation groups, \textit{London Math. Soc. Student Texts} \textbf{45}, Cambridge University Press, Cambridge. 1999.

\bibitem{ATLAS}J.~H.~Conway, R.~T.~Curtis, S.~P.~Norton, R.~A.~Parker,
  R.~A.~Wilson, \textit{Atlas of Finite Groups}, Clarendon Press,
  Oxford, 1985.

\bibitem{DM}J.~D.~Dixon, B.~Mortimer, \textit{Permutation groups}, Graduate Texts in Mathematics \textbf{163}, Springer-Verlag, New York, 1996.

\bibitem{EZal}
L.~Emmett, A.~E.~Zalesski, On regular orbits of elements of classical groups in their 
permutation representations. \textit{Comm. Algebra} \textbf{39} (2011),  3356--3409.

\bibitem{GMPSorders}S.~Guest, J.~Morris, C.~E.~Praeger, P.~Spiga, On the maximum orders of elements 
of finite almost simple groups and primitive permutation groups, \textit{Trans. Amer. Math. Soc.}, to appear.

\bibitem{GS}
S. Guest, P. Spiga, {Finite primitive groups and regular orbits of group elements, in preparation.

\bibitem{GM}R.~Guralnick, K.~Magaard, On the minimal degree of a
  primitive permutation group, \textit{J. Algebra} \textbf{207}
  (1998), 127--145.

\bibitem{HH}
B.~Hartley, T.~O.~Hawkes, \emph{Rings, modules and linear algebra.}
Chapman and Hall, London,  1970. 

\bibitem{KL}P.~Kleidman, M.~Liebeck, The Subgroup Structure of the
  Finite Classical Groups, \textit{London Mathematical Society Lecture
  Note Series} \textbf{129}, Cambridge University Press, Cambridge, 1990.

\bibitem{LLS}R.~Lawther, M.~W.~Liebeck, G.~M.~Seitz, Fixed point
  ratios in actions of finite exceptional groups of Lie type,
  \textit{Pacific Journal of Mathematics} \textbf{205} (2002), 393--464.

\bibitem{LPS2}M.~W.~Liebeck, C.~E.~Praeger, J.~Saxl, A Classification
  of the maximal subgroups of the finite alternating and symmetric
  groups, \textit{J.~Algebra} \textbf{111} (1987), 365--383.

\bibitem{LPS1}M.~W.~Liebeck, C.~E.~Praeger, J.~Saxl, On the O'Nan-Scott
  theorem for finite primitive permutation
  groups, \textit{J. Austral. Math. Soc. Ser. A} \textbf{44} (1988),
  389-–396.

\bibitem{LS}M.~W.~Liebeck, J.~Saxl, Minimal degrees of primitive
  permutation groups, with an application to monodromy groups of
  covers of Riemann surfaces, \textit{Proc. London Math. Soc.~(3)}
  \textbf{63} (1991), 266--314.

\bibitem{Maroti}A.~Mar\'oti, On the orders of primitive groups,
  \textit{J. Algebra} \textbf{258} (2002), 631--640.

\bibitem{Mass}J.~P.~Massias, J.~L.~Nicolas, G.~Robin, Effective Bounds
  for the Maximal Order of an Element in
the Symmetric Group, \textit{Mathematics of Computation} \textbf{53} (1989), 665--678.

\bibitem{25}C.~E.~Praeger, Finite quasiprimitive graphs, in Surveys in
  combinatorics, \textit{London Mathematical Society Lecture Note
    Series}, vol. 24 (1997), 65--85.

\bibitem{PrSa}C.~Praeger, J.~Saxl, On the order of primitive permutation groups, \textit{Bull. London Math. Soc.} \textbf{12} (1980), 303--308. 

\bibitem{Potter}W.~Potter,
Nonsolvable groups with an automorphism inverting many elements,
\textit{Arch. Math.} \textbf{50} (1988), 292--299.

\bibitem{Robbins}H.~Robbins, A remark on Stirling's formula,
  \textit{Amer. Math. Montly} \textbf{62} (1955), 26--29.

\bibitem{Robin}G.~Robin,
Estimation de la fonction de Tchebychef $\Theta$ sur le $k$-i\`eme
nombre premier et grandes valeurs de la fonction $\omega(n)$ nombre de
diviseurs premiers de $n$,  \textit{Acta Arith.} \textbf{42} (1983), 367--389.

\bibitem{Rudin}D.~J.~Rusin, What is the probability that two elements
  of a finite group commute?, \textit{Pacific J.~Math.} \textbf{82}
  (1979), 237--247.

\bibitem{SZ2}
J.~Siemons, A.~Zalesskii, Intersections of matrix algebras and permutation representations of PSL(n,q).
\textit{J. Algebra}  \textbf{226} (2000), 451--478.

\bibitem{SZ1}
J.~Siemons, A.~Zalesski\u{i}, Regular orbits of cyclic subgroups in permutation 
representations of certain simple groups, \emph{J. Algebra} \textbf{256} (2002), 611--625. 

\end{document}